\newcommand{\C}{\mathbb{C}}
\newcommand{\Q}{\mathbb{Q}}
\newcommand{\R}{\mathbb{R}}
\newcommand{\F}{\mathbb{F}}
\newcommand{\Z}{\mathbb{Z}}
\renewcommand{\P}{\mathbb{P}}
\DeclareMathOperator{\Aut}{Aut}
\DeclareMathOperator{\Pic}{Pic}
\DeclareMathOperator{\Gal}{Gal}
\DeclareMathOperator{\im}{im}
\DeclareMathOperator{\rk}{rk}
\DeclareMathOperator{\Nef}{Nef}
\DeclareMathOperator{\Amp}{Amp}
\DeclareMathOperator{\Br}{Br}
\DeclareMathOperator{\Spec}{Spec}
\newcommand{\pos}{{\mathcal{C}}}
\newcommand{\Picsch}{\mathbf{Pic}}
\newcommand{\Autsch}{\mathbf{Aut}}
\newcommand{\et}{\textrm{{\'e}t}}
\newcommand{\sigmaf}[2]{{{}^{#1} #2}}
\newtheorem{theorem}{Theorem}[section]
\newtheorem{proposition}[theorem]{Proposition}
\newtheorem{lemma}[theorem]{Lemma}
\newtheorem{corollary}[theorem]{Corollary}
\theoremstyle{definition}
\newtheorem{defn}[theorem]{Definition}
\newtheorem{example}[theorem]{Example}
\theoremstyle{remark}
\newtheorem{remark}[theorem]{Remark}
\newcommand{\Gk}{{\Gamma_k}}
\newcommand{\kbar}{\bar{k}}
\newcommand{\ksep}{k^s}
\newcommand{\Xbar}{{\bar{X}}}
\newcommand{\Xsep}{X^s}
\renewcommand{\H}{\mathrm{H}}
\newcommand{\OO}{{\mathcal{O}}}
\renewcommand{\O}{{\mathrm{O}}}
\renewcommand{\L}{\Lambda}
\renewcommand{\S}{{\mathcal{S}}}
\title{Finiteness results for K3 surfaces over arbitrary fields}
\author{Martin Bright}
\address{Mathematisch Instituut \\ Niels Bohrweg 1 \\ 2333 CA Leiden \\ Netherlands}
\email{m.j.bright@math.leidenuniv.nl}
\author{Adam Logan}
\address{The Tutte Institute for Mathematics and Computation,
P.O. Box 9703, Terminal, Ottawa, ON K1G 3Z4, Canada}
\address{School of Mathematics and Statistics, 4302 Herzberg Laboratories,
  1125 Colonel By Drive, Carleton University, Ottawa, ON K1S 5B6, Canada}
\email{adam.m.logan@gmail.com}
\author{Ronald van Luijk}
\address{Mathematisch Instituut \\ Niels Bohrweg 1 \\ 2333 CA Leiden \\ Netherlands}
\email{rvl@math.leidenuniv.nl}
\subjclass[2010]{Primary 14J28; Secondary 14J50, 14G27}
\keywords{K3 surfaces; automorphism groups}
\begin{document}

\begin{abstract}
Over an algebraically closed field, various finiteness results are known regarding the automorphism group of a K3 surface and the action of the automorphisms on the Picard lattice.  We formulate and prove versions of these results over arbitrary base fields, and give examples illustrating how behaviour can differ from the algebraically closed case.
\end{abstract}

\maketitle

\section{Introduction}

The geometry of K3 surfaces over the complex numbers has a long history, with many results known about the cohomology, the Picard group, and the automorphism group of an algebraic K3 surface, and how these objects interact.
Such results over the complex numbers carry over to other algebraically closed fields of characteristic zero, and similar results are also known over algebraically closed fields of other characteristics.
For a comprehensive treatment of the geometry of K3 surfaces, we refer the reader to the lecture notes of Huybrechts~\cite{huybrechts}.
Much of the theory we use was originally developed by Nikulin~\cite{nikulin80}.

K3 surfaces are also interesting from an arithmetic point of view, with much recent work on understanding the rational points, curves, Brauer groups and other invariants of K3 surfaces over number fields.
In this article, we investigate the extent to which some standard finiteness results for K3 surfaces over algebraically closed fields remain true over more general base fields.
In particular, we show how to define the correct analogue of the Weyl group, and give an explicit description of it.
This allows us to formulate and prove finiteness theorems over arbitrary fields, modelled on those already known over algebraically closed fields.
The tools we use include representability of the Picard and automorphism schemes, classification of transitive group actions on Coxeter--Dynkin diagrams, and an explicit description of the walls of the ample cone. 
We follow these theoretical results with several detailed examples, showing how the relationship between the Picard group and the automorphism group can be different from the geometric case.
We end by proving that a surface over $\Q$ of the form $x^4-y^4 = c(z^4-w^4)$
has finite automorphism group for $c \in \Q^*$ that is not in the subgroup
generated by squares together with $-1, 2$.

The specific finiteness results we address go back to Sterk~\cite{sterk}.  To state them, we need some definitions; we follow the notation of~\cite{huybrechts}.
In this article, by a K3 surface we will always mean an algebraic K3 surface, which is therefore projective.

Let $k$ be a field, and let $X$ be a K3 surface over $k$.
Denote the group of isometries of $\Pic X$ by $\O(\Pic X)$.
Reflection in any $(-2)$-class in $\Pic X$ defines an isometry of $\Pic X$, and we define the \emph{Weyl group} $W(\Pic X) \subset \O(\Pic X)$ to be the subgroup generated by these reflections.

We recall the definitions of the positive, ample and nef cones associated to the K3 surface $X$; see also~\cite[Chapter~8]{huybrechts}.  Let $(\Pic X)_\R$ denote the real vector space $(\Pic X) \otimes_\Z \R$.
By the Hodge index theorem, the intersection product on $(\Pic X)_\R$ has signature $(1, \rho-1)$; so the set $\{ \alpha \in (\Pic X)_\R \mid \alpha^2 > 0 \}$ consists of two connected components.
The \emph{positive cone} $\pos_X \subset (\Pic X)_\R$ is the connected component containing all the ample classes.

The \emph{ample cone} $\Amp(X)$ is the cone in $(\Pic X)_\R$ generated by all classes of ample line bundles.  The \emph{nef cone} $\Nef(X)$ is defined as
\[
\Nef(X) = \{ \alpha \in (\Pic X)_\R \mid \alpha \cdot C \ge 0 \text{ for all curves } C \subset X \}.
\]
Finally, we define $\Nef^e(X)$ to be the real convex hull of $\Nef(X) \cap \Pic X$.
An application of the criterion of Nakai--Moishezon--Kleiman 
shows that $\Amp(X)$ is the interior of $\Nef(X)$ and $\Nef(X)$ is the closure of $\Amp(X)$: see~\cite[Corollary~8.1.4]{huybrechts}.

The following finiteness theorems are due to Sterk~\cite{sterk} for $k=\C$ and to Lieblich and Maulik~\cite{lm} when $k$ has positive characteristic not equal to $2$.
As in Huybrechts~\cite[Chapter~8]{huybrechts}, a \emph{fundamental domain} for the action of a discrete group $G$ acting continuously on a topological manifold $M$ is defined as the closure $\overline{U}$ of an open subset $U\subset M$ such that $M = \cup_{g \in G} g \overline{U}$
and such that for $g \neq h \in G$ the intersection $g\overline{U} \cap h\overline{U}$ does not contain interior points of $gU$ or $hU$.
\begin{theorem}\label{thm:old}
Let $k = \kbar$ be an algebraically closed field of characteristic not equal to~$2$, and let $X$ be a K3 surface over $k$.
\begin{enumerate}
\item \cite[Corollary~8.2.11]{huybrechts} The cone $\Nef(X) \cap \pos_X$ is a fundamental domain for the action of $W(\Pic X) \subset \O(\Pic X)$ on the positive cone $\pos_X$.
\item\label{it2} \cite[Theorem~15.2.6]{huybrechts}, \cite[Proposition~5.2]{lm} The subgroup $W(\Pic X)$ is normal in $\O(\Pic X)$; the natural map $\Aut X \to \O(\Pic X)/W(\Pic X)$ has finite kernel and image of finite index.
\item \cite[Theorem~8.4.2]{huybrechts} The action of $\Aut X$ on $\Nef^e X$ admits a rational polyhedral fundamental domain.
\item \cite[Corollary~8.4.6]{huybrechts} The set of orbits under $\Aut X$ of $(-2)$-curves on $X$ is finite.  More generally, for any $d$ there are only finitely many orbits under $\Aut X$ of classes of irreducible curves of self-intersection $2d$.
\end{enumerate}
\end{theorem}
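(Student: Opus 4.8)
The plan is to deduce all four parts from two sources: the structure theory of the hyperbolic lattice $\Pic X$ together with its positive cone, and the Torelli theorem for K3 surfaces (Piatetski-Shapiro--Shafarevich, Burns--Rapoport and Looijenga--Peters over $\C$; Ogus and Lieblich--Maulik in positive characteristic, where the hypothesis on the characteristic enters). In effect this is a synthesis of the arguments of Sterk~\cite{sterk} and Lieblich--Maulik~\cite{lm}, and for several of the lattice-theoretic steps I would simply quote~\cite[Chapter~8]{huybrechts}.

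For part (1), the first task is to identify the walls of the nef cone. Riemann--Roch on the K3 surface $X$ shows that every class $\delta\in\Pic X$ with $\delta^2=-2$ has $\delta$ or $-\delta$ effective, and Riemann--Roch together with adjunction shows that an irreducible curve $C\subset X$ with $C^2<0$ is a smooth rational curve with $C^2=-2$; combined with the Nakai--Moishezon criterion recalled above, this gives
\[
\Nef(X)\cap\pos_X=\{\,\alpha\in\pos_X : \alpha\cdot C\ge 0\text{ for every }(-2)\text{-curve }C\,\},
\]
so the walls of this cone inside $\pos_X$ are exactly the hyperplanes $C^\perp$ for $(-2)$-curves $C$. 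I would then invoke the theory of hyperbolic reflection groups: the hyperplanes $\delta^\perp$, as $\delta$ ranges over all $(-2)$-classes, are locally finite in $\pos_X$ (for a fixed ample class $h$ there are only finitely many effective $(-2)$-classes with bounded intersection number against $h$), they subdivide $\pos_X$ into chambers, the Weyl group $W(\Pic X)$ permutes the chambers simply transitively, and $\Nef(X)\cap\pos_X$ is the closure of the chamber containing the ample classes --- hence a fundamental domain in the sense defined above.

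For part (2), normality is formal: any $g\in\O(\Pic X)$ maps $(-2)$-classes to $(-2)$-classes and hence conjugates the reflection $s_\delta$ to $s_{g\delta}\in W(\Pic X)$. Writing $\O^+(\Pic X)$ for the subgroup of $\O(\Pic X)$ of index at most $2$ preserving $\pos_X$, simple transitivity of $W(\Pic X)$ on chambers yields an internal semidirect product $\O^+(\Pic X)=W(\Pic X)\rtimes\O_{\mathrm{nef}}$, where $\O_{\mathrm{nef}}$ is the stabiliser of $\Nef(X)$; in particular $\O^+(\Pic X)/W(\Pic X)\isom\O_{\mathrm{nef}}$. An automorphism acting trivially on $\Pic X$ fixes a chosen ample class $h$, hence lies in the group of $k$-points of the automorphism scheme of the polarised surface $(X,h)$, which is of finite type with Lie algebra $\H^0(X,T_X)\isom\H^0(X,\Omega^1_X)=0$, hence finite; so $\Aut X\to\O(\Pic X)$ has finite kernel. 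Since automorphisms preserve ampleness, the image of $\Aut X$ lies in $\O_{\mathrm{nef}}$, and the Torelli theorem identifies it, up to finite index, with the subgroup of $\O_{\mathrm{nef}}$ consisting of isometries that extend to isometries of $\H^2$ (Betti, crystalline or $\ell$-adic, as appropriate) respecting the relevant extra structure and acting as $\pm 1$ on the orthogonal complement of $\Pic X$; Nikulin's theory of discriminant forms shows that this subgroup has finite index. Combining, $\Aut X\to\O^+(\Pic X)/W(\Pic X)$ has finite kernel and image of finite index.

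For part (3), I would set $\Gamma:=\im\bigl(\Aut X\to\O(\Pic X)\bigr)$, which by part (2) is a subgroup of finite index in $\O_{\mathrm{nef}}$, and appeal to the theorem of Looijenga and Sterk that an arithmetic group acting on $\Nef^e(X)$ --- the real convex hull of the rational nef classes defined above, i.e.\ the rational closure of the ample cone inside $\pos_X$ --- admits a rational polyhedral fundamental domain. Proving that statement, by the reduction theory of arithmetic groups (one shows there are only finitely many $\Gamma$-orbits of the relevant rational polyhedral subcones and then assembles them), is the genuinely hard input; I would quote it from~\cite[Theorem~8.4.2]{huybrechts} rather than reprove it. Part (4) then follows from parts (1) and (3) by the counting arguments of~\cite[Corollary~8.4.6]{huybrechts}: a rational polyhedral fundamental domain has only finitely many faces, each meeting only finitely many of the walls $C^\perp$ of $\Nef(X)$, which bounds the number of $\Aut X$-orbits of $(-2)$-curves, and a parallel count, using that an irreducible curve with $C^2=2d\ge 0$ is nef and so has class in $\Nef^e(X)$, handles the curves of self-intersection $2d$. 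The two steps I expect to be the real obstacles are the reduction-theory argument behind part (3) and the Torelli-theoretic comparison in part (2).
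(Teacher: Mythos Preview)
The paper does not prove Theorem~\ref{thm:old}: it is stated in the introduction purely as background, with each part attributed to the cited references in Huybrechts~\cite{huybrechts} and Lieblich--Maulik~\cite{lm}. The paper's own contribution is the family of analogues over arbitrary base fields proved in Section~\ref{sec:ft}, which take Theorem~\ref{thm:old} as a black box (see the proof of Proposition~\ref{main-result}, which opens by invoking~\cite[Theorem~15.2.6]{huybrechts} and~\cite{lm} directly). So there is no ``paper's own proof'' to compare your proposal against.

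That said, your outline is a faithful sketch of how the cited proofs actually go, and it matches the way the paper \emph{uses} these ingredients when proving the generalisations: the identification of the walls of $\Nef(X)\cap\pos_X$ via $(-2)$-curves is exactly what appears in Remark~\ref{walls}; the Sterk/Looijenga reduction-theory argument you describe for part~(3) is precisely what the paper adapts in Corollary~\ref{rat-poly}; and the counting argument you give for part~(4) is the one in Corollary~\ref{finite-class-orbits}. The two points you flag as the real obstacles---the Torelli-theoretic input in part~(2) and the arithmetic reduction theory behind part~(3)---are indeed the substantive steps, and the paper, like you, treats them as quotable rather than reprovable. One small caution: in part~(4) for $d=0$ the statement as written in the theorem concerns \emph{classes} of irreducible curves, and your nef-cone count gives finitely many orbits of \emph{primitive} such classes; the paper's own Corollary~\ref{finite-class-orbits} makes this primitivity hypothesis explicit and the remark following it explains why it cannot be dropped, so be careful not to overclaim there.
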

In Section~\ref{sec:ft} we will prove analogues of the various statements of Theorem~\ref{thm:old} when $k$ is replaced by an arbitrary base field of characteristic different from $2$. 

A consequence of Theorem~\ref{thm:old}(\ref{it2}) is that the finiteness of $\Aut X$ depends only on the lattice $\Pic X$.  Those possible Picard lattices for which $\O(\Pic X)/W(\Pic X)$ is finite have been classified~\cite{nikulin}.
Over an arbitrary base field we will see that, instead of using the Weyl group $W(\Pic X)$, we must use the Galois-invariant part of the geometric Weyl group.
This means that the finiteness of $\Aut X$ is no longer determined purely by the Picard lattice $\Pic X$; rather, it depends on the geometric Picard lattice together with the Galois action.
In Section~\ref{sec:eg}, we give several examples that illustrate this difference to the classical case.

We thank the referee, whose thorough report helped us improve the exposition and correct a number of errors in the paper.
The second author would like to thank the Tutte Institute for Mathematics and Computation for its partial support for a visit to the University of Leiden during which much of this research was done.

\section{Lemmas on lattices}
In this section we will study lattices with the action of a 
group.  Given a lattice $\L$ with the action of a finite group $H$, we
consider the group of automorphisms that commute with $H$, and the
group of automorphisms that preserve the sublattice fixed by $H$.

\begin{defn} 
A \emph{lattice} $\L$ is a free abelian group of finite rank with
a nondegenerate integer-valued symmetric bilinear form.  If the form is
(positive or negative) definite, we likewise refer to $\L$ as 
\emph{definite}.  The group of automorphisms of $\L$ preserving the form
is denoted $\O(\L)$.  A \emph{sublattice} of $\L$ is a subgroup on
which the restriction of the form is nondegenerate.  Given a
sublattice $M \subseteq \L$, we use $\O(\L,M)$ for the subgroup of
$\O(\L)$ fixing $M$ as a set.  For a subgroup $H \subseteq \O(\L)$,
let $\L^H$ be the subgroup of $\L$ consisting of the elements fixed
by every element of $H$
(note that according to our conventions $\L^H$ may not be a lattice,
because the quadratic form on $\L$ may be degenerate when restricted to $\L^H$).
The vector space $\L \otimes_\Z \Q$ will be denoted $\Lambda_\Q$.
\end{defn}

Our goal is to prove the following proposition.

\begin{proposition}\label{lattice-prop}
Let $\L$ be a lattice and $H \subseteq \O(\L)$ a subgroup such that $M = \L^H$ is a
lattice.  Then the following hold:
\begin{enumerate}
\item the natural map $\O(\L,M) \to \O(M)$ has image of finite index;
\item suppose that $M^\perp$ is definite.  Then 
$\O(\L,M) \to \O(M)$ has finite kernel, and the centralizer $Z_{\O(\L)} H$ is a 
finite-index subgroup of $\O(\L,M)$.
\end{enumerate}
\end{proposition}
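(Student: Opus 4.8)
Write $N = M^\perp$. The plan is to reduce both statements to properties of the finite-index sublattice $M \oplus N \subseteq \L$ together with the standard description of $\L$ as an overlattice of $M \oplus N$. Since $M$ is a lattice, $N$ is again a lattice, $M \cap N = 0$, and $M \oplus N$ has finite index in $\L$; moreover $\L$ is contained in $M^* \oplus N^*$, and its image in the finite group $D_M \oplus D_N := (M^*/M) \oplus (N^*/N)$ is a finite subgroup $\Gamma$. Any isometry of $M \oplus N$ extends $\Q$-linearly to an isometry of $\L \otimes \Q = (M \otimes \Q) \oplus (N \otimes \Q)$, and it restricts to an isometry of $\L$ if and only if the automorphism it induces on $D_M \oplus D_N$ preserves $\Gamma$. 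Conversely, every $\phi \in \O(\L, M)$ preserves $N = M^\perp$, so restriction yields a homomorphism $\O(\L, M) \to \O(M) \times \O(N)$, $\phi \mapsto (\phi|_M, \phi|_N)$, which is injective because $M \oplus N$ spans $\L \otimes \Q$.

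For (1), I would observe that $\Aut(D_M)$ is finite, so the kernel $\widetilde{\O}(M)$ of the natural map $\O(M) \to \Aut(D_M)$ has finite index in $\O(M)$. If $\psi \in \widetilde{\O}(M)$, then $\psi \oplus \mathrm{id}_N$ is an isometry of $M \oplus N$ inducing the identity on $D_M \oplus D_N$, hence preserving $\Gamma$, hence defining an element of $\O(\L)$ that fixes $M$ and restricts to $\psi$ on $M$. Thus the image of $\O(\L, M) \to \O(M)$ contains $\widetilde{\O}(M)$ and so has finite index.

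For (2), now $N$ is definite, so $\O(N)$ is finite. An element of the kernel of $\O(\L, M) \to \O(M)$ fixes $M$ pointwise, so is determined by its restriction to $N$; by injectivity of $\O(\L, M) \to \O(M) \times \O(N)$ this kernel embeds in $\O(N)$ and is finite. Next, $Z_{\O(\L)} H \subseteq \O(\L, M)$: if $\phi$ commutes with every $h \in H$ and $m \in M = \L^H$, then $h(\phi m) = \phi(h m) = \phi m$ for all $h$, so $\phi m \in \L^H = M$. Finally, $H$ fixes $M$ pointwise — so $H \subseteq \O(\L, M)$ — and preserves $N$, so restriction to $N$ gives a homomorphism $r \colon \O(\L, M) \to \O(N)$ with image of $H$ a subgroup $\bar H \subseteq \O(N)$; since the actions in question are block-diagonal on $(M \otimes \Q) \oplus (N \otimes \Q)$ and $H$ acts trivially on $M$, an element $\phi \in \O(\L, M)$ centralizes $H$ if and only if $r(\phi)$ centralizes $\bar H$. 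In particular $\ker r \subseteq Z_{\O(\L)} H$, and as $\O(N)$ is finite, $\ker r$ has finite index in $\O(\L, M)$; hence so does $Z_{\O(\L)} H$.

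The one step that is not purely formal is the lifting in part (1): one must know that the finite-index subgroup $\widetilde{\O}(M) \subseteq \O(M)$ can be realized inside $\O(\L, M)$, which is where the description of $\L$ as an overlattice of $M \oplus N$ controlled by the discriminant groups is essential. The other ingredients — finiteness of $D_M$ and of $\Aut(D_M)$, finiteness of $\O(N)$ for definite $N$, and the block-diagonal shape of the action once $M$ and $M^\perp$ are both preserved — are elementary.
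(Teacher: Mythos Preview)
Your proof is correct, and for part~(2) it matches the paper's argument almost exactly: the paper likewise shows $Z_{\O(\L)} H \subseteq \O(\L,M)$, that the kernel of $\O(\L,M) \to \O(M^\perp)$ lies in the centralizer, and then concludes by finiteness of $\O(M^\perp)$. (Your ``if and only if'' characterization of $Z_{\O(\L)} H$ as $r^{-1}(Z_{\O(N)}\bar H)$ is in fact slightly sharper than what the paper states, though only the containment $\ker r \subseteq Z_{\O(\L)} H$ is needed.)

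For part~(1) you take a different route. The paper proves the more symmetric statement that the injection $\O(\L,M) \hookrightarrow \O(M) \oplus \O(M^\perp)$ has image of finite index, by observing that any element of $\O(M) \oplus \O(M^\perp)$ stabilizing the finite-index subgroup $n\L \subset M \oplus M^\perp$ lies in the image, and that there are only finitely many subgroups of a given index; projecting to $\O(M)$ then gives~(1). Your argument via discriminant groups is the standard Nikulin-style overlattice approach and is more constructive: it names an explicit finite-index subgroup $\widetilde{\O}(M) = \ker(\O(M) \to \Aut D_M)$ and exhibits a section $\psi \mapsto \psi \oplus \mathrm{id}_N$. Both are short and valid; yours gives an explicit lift at the cost of invoking the glue-map formalism, while the paper's argument avoids discriminant groups entirely but is purely existential.
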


Our interest in this situation arises from the geometry of K3 surfaces.  Let
$X$ be a projective K3 surface defined over a field $F$, and let $K/F$ be a
Galois extension.  Let $\L = \Pic X_K$ with the intersection pairing, 
and let $H$ be the image of $\Gal(K/F)$ in $\O(\L)$.
If $X$ has a rational point over $F$, we have
$\Pic X_F = \L^H$.
(See Section~\ref{sec:ft} for more details. This statement holds slightly more generally: for example,
if $F$ is a number field and $X$ has points everywhere locally over $F$.)
The Hodge index theorem states that $\L$ has signature $(1,n)$ and $\L^H$
has signature $(1,m)$: therefore $M^\perp$ is definite.

Before giving the proof we first collect a few helpful statements, which are probably well known.
\begin{lemma}\label{sublemma}
Let $\L$ be a lattice and $M$ a sublattice.  Let $H$ be a subgroup of
$\O(\L)$ such that $\L^H$ is a lattice.  For groups $G' \subset G$, let
$Z_G(G')$ denote the centralizer of $G'$ in $G$.
\begin{enumerate}
\item\label{Mperp} $M^\perp$ is a sublattice of $\L$.
\item\label{Operp}  There is a natural injection
$d: \O(\L,M) \hookrightarrow \O(M) \oplus \O(M^\perp)$
with image of finite index.
\item\label{cent1} $Z_{\O(\L)} H$ is contained in $\O(\L,\L^H)$.
\item\label{cent2} The kernel of the map $\O(\L,\L^H) \to \O({\L^H}^\perp)$ is contained in
$Z_{\O(\L)} H$.
\end{enumerate}
\end{lemma}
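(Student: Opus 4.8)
The plan is to establish the four assertions in turn, the common engine being that for a nondegenerate sublattice $M \subseteq \L$ the orthogonal complement $M^\perp$ is again nondegenerate and $M \oplus M^\perp$ has finite index in $\L$; consequently any isometry of $\L$ is determined by its restrictions to $M$ and $M^\perp$, and any automorphism of the torsion-free group $\L$ that fixes a finite-index subgroup pointwise is the identity. I would use these two facts repeatedly without further comment.

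For~(\ref{Mperp}), I would first observe that nondegeneracy of the form on $M$ gives $M \cap M^\perp = 0$, and that the pairing induces a homomorphism $\L \to \mathrm{Hom}(M,\Z)$ with kernel exactly $M^\perp$; its image contains the image of $M \to \mathrm{Hom}(M,\Z)$, which has finite index (again by nondegeneracy of $M$), so $\rk(\L/M^\perp) = \rk M$ and hence $\rk M^\perp = \rk\L - \rk M$, forcing $[\L : M\oplus M^\perp] < \infty$. Nondegeneracy of $M^\perp$ then follows: any element of the radical of the form on $M^\perp$ pairs trivially with $M \oplus M^\perp$, hence with a finite-index subgroup of $\L$, hence with all of $\L$, and so is zero.

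For~(\ref{Operp}), every $\phi \in \O(\L,M)$ satisfies $\phi(M^\perp) = \phi(M)^\perp = M^\perp$, so restriction defines a homomorphism $d \colon \O(\L,M) \to \O(M) \oplus \O(M^\perp)$, which is injective because any element of its kernel fixes $M\oplus M^\perp$ pointwise. For the finite-index claim I would put $L_0 = M\oplus M^\perp$ and $N = [\L : L_0]$, so that $L_0 \subseteq \L \subseteq \tfrac1N L_0$ and $\L/L_0$ is one of the finitely many subgroups of the finite group $\tfrac1N L_0/L_0$. The group $\O(M)\oplus\O(M^\perp)$ acts $\Q$-linearly on $\tfrac1N L_0/L_0$ permuting its subgroups, so the stabilizer of $\L/L_0$ has finite index; each element of that stabilizer carries $\L$ to $\L$, is therefore an isometry of $\L$ fixing $M$ setwise, hence lies in $\O(\L,M)$, and its image under $d$ is itself. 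Thus $d$ has image of finite index.

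Assertions~(\ref{cent1}) and~(\ref{cent2}) are then short. If $\phi$ centralizes $H$ and $x\in\L^H$, then $h(\phi x) = \phi(hx) = \phi x$ for all $h\in H$, so $\phi$ preserves $\L^H$; applying this to $\phi^{-1}$ as well gives $\phi\in\O(\L,\L^H)$, which is~(\ref{cent1}). For~(\ref{cent2}), write $M=\L^H$ and suppose $\phi\in\O(\L,M)$ restricts to the identity on $M^\perp$; each $h\in H$ fixes $M$ pointwise and so preserves $M^\perp$, and one checks that $\phi h$ and $h\phi$ both act as $h|_{M^\perp}$ on $M^\perp$ and as $\phi$ on $M$, hence agree on the finite-index subgroup $M\oplus M^\perp$ and therefore on $\L$; thus $\phi\in Z_{\O(\L)}H$. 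The only step here that is more than bookkeeping is the finite-index assertion in~(\ref{Operp}): it amounts to controlling the gap between $\O(\L)$ and $\O(M\oplus M^\perp)$ through the finite ``glue group'' $\L/(M\oplus M^\perp)$, and I expect that to be the point needing the most care, though the crude counting argument above suffices and the full discriminant-form formalism is not required.
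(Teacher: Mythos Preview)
Your proof is correct and follows essentially the same route as the paper's: both hinge on the fact that $M\oplus M^\perp$ has finite index in $\L$, use a counting-of-subgroups argument for the finite-index claim in~(\ref{Operp}), and verify~(\ref{cent1}) and~(\ref{cent2}) by the same direct calculations. The only cosmetic differences are that you supply a self-contained argument for~(\ref{Mperp}) where the paper cites Eichler, and in~(\ref{cent2}) you argue integrally via the finite-index subgroup $M\oplus M^\perp$ while the paper passes to $\L_\Q$.
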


\begin{proof}
To prove~(\ref{Mperp}), we just have to prove that the pairing on the subspace $M^\perp_\Q \subset \Lambda_\Q$ is non-degenerate; this is~\cite[Satz~1.2]{Eichler}.

Let $\phi \in \O(\L,M)$.  By definition $\phi$
restricts to an endomorphism of $M$.  Let $S_M$ be the saturation of
$M$ in $\L$.  Clearly $\phi(S_M) \subseteq S_M$.  Now,
$\phi^{-1}(S_M)$ has the same rank as $S_M$ and contains $S_M$, so it
is equal to $S_M$.  So if $\phi(S_M) \ne S_M$, then the image of $\phi$ does not
contain $S_M$, contradicting the hypothesis that $\phi$ is an automorphism of
$\L$.  Since $M$ is a
subgroup of finite index of $S_M$, this implies that $\#(S_M/M) =
\#(S_M/\phi(M))$.  But $\phi(M) \subseteq M$, so it follows that
$\phi(M) = M$.  Thus there is a map $\O(\L,M) \to \O(M)$.  Now, if
$\phi \in \O(\L,M)$ and $y \in M^\perp$, then $m \cdot \phi(y) =
\phi^{-1}(m) \cdot y = 0$ for all $m \in M$, so $\phi(y) \in M^\perp$, and
we get a map $\O(\L,M) \to \O(M^\perp)$ in the same way.  Combining these two
maps gives a map $d: \O(\L,M) \to \O(M) \oplus \O(M^\perp)$.

If $d(\phi) = 1$, then $d$ is the identity on $M \oplus M^\perp$, which is a
subgroup of $\L$ of finite index.  Because $\L$ is torsion-free, this
forces $\phi$ to be the identity.  To show that $\im d$ has finite image in
$(O(M) \oplus O(M^\perp))$,
et $n$ be the smallest positive integer such that
$n\L \subseteq M \oplus M^\perp$, and let $k = [M \oplus M^\perp:n\L]$.
Every element of $\O(M) \oplus \O(M^\perp)$ that fixes $n\L$ as a set
is in the image of
$d$, because the induced automorphism of $n\L$ extends to an automorphism of
$\L$ with the same action on $M \oplus M^\perp$.  Since there are only finitely
many subgroups of index $k$ in $M \oplus M^\perp$, the stabilizer of $n\L$ is of
finite index, and we have proved~(\ref{Operp}).

To prove (\ref{cent1}), let $\phi \in Z_{\O(\L)} H$,
and let $m \in \L^H$ and $h \in H$.  Then $h(m) = m$ and $\phi
\circ h = h \circ \phi$.  So $h(\phi(m)) = \phi(h(m)) = \phi(m)$,
establishing that $\phi(m) \in \L^H$.

Finally we prove (\ref{cent2}).  Choose $\phi$ in the kernel and $h \in H$, and let
$x \in \L$.  We will view $\phi$ and $h$ as automorphisms of $\L_\Q$.
In $\L_\Q$ we may write $x = x_1 + x_2$,
where $x_1 \in \L^H_\Q$ and $x_2 \in ({\L^H_\Q})^\perp$.
Then $h(\phi(x)) = h(\phi(x_1 + x_2)) = h(\phi(x_1)) + h(\phi(x_2)).$
However, $\phi(x_1) \in \L^H_\Q$, so $h(\phi(x_1)) = \phi(x_1)$,
and $\phi$ is in the kernel of the map to $\O({\L^H}^\perp)$, so $\phi(x_2)
= x_2$.  It follows that $h(\phi(x)) = \phi(x_1) + h(x_2)$.
Similarly, $\phi(h(x)) = \phi(h(x_1+x_2)) = \phi(h(x_1)) +
\phi(h(x_2)) = \phi(x_1) + h(x_2) = h(\phi(x))$, establishing that
$\phi$ commutes with $h$.
\end{proof}

\begin{proof}[Proof of Proposition~\ref{lattice-prop}]
The map $\O(\L,M) \to \O(M)$ is a
composition $\O(\L,M) \to \O(M) \oplus \O(M^\perp) \to \O(M)$.  In part (2) of
the lemma just proved we showed that the first map has image of finite index.
The second map is surjective, so the composition has image of finite index
as well.

We now suppose that $M^\perp$ is definite to prove the second statement.
Then $\O(M^\perp)$ is finite, so $\O(\L,M) \to \O(M)$ is a composition of
an injective map with a map with finite kernel and so its kernel is finite.
Let $K = \ker(\O(\L,M)
\to \O(M^\perp))$.  Then $Z_{\O(\L)} H/K$ has finite index in
$\O(\L,M)/K$, because both inject into the finite group
$\O(M^\perp)$.  Therefore $Z_{\O(\L)} H$ has finite index in $\O(\L,M)$
too.
\end{proof}

\section{Finiteness results for K3 surfaces}\label{sec:ft}

In this section we formulate and prove analogues of the statements of Theorem~\ref{thm:old} when $k$ is an arbitrary field.
We first look at the case of $k$ separably closed, which is straightforward.

\begin{lemma}\label{lem:sep}
Let $k = \ksep$ be a separably closed field, and let $\kbar$ be an algebraic closure of~$k$.  Let $X$ be a K3 surface over $k$, and let $\Xbar$ be the base change of $X$ to $\kbar$.  Then the natural maps $\Pic X \to \Pic \Xbar$ and $\Aut X \to \Aut \Xbar$ are isomorphisms.
\end{lemma}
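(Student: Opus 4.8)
The plan is to deduce both isomorphisms from the fact that, for a K3 surface, the Picard and automorphism functors are represented by \emph{\'etale} group schemes over $k$, which therefore acquire no new points under the purely inseparable extension $\kbar/k$. If $k$ has characteristic zero then $\ksep = \kbar$ and there is nothing to prove, so assume $k$ has characteristic $p > 0$, so that $\kbar/k$ is purely inseparable. Since $k$ is separably closed, every finite \'etale $k$-scheme is a disjoint union of copies of $\Spec k$; hence for any \'etale group scheme $G$ locally of finite type over $k$, the natural map $G(k) \to G(\kbar)$ is a bijection. I would also use the elementary fact that a group scheme locally of finite type over a field with trivial tangent space at the identity is \'etale, its identity component being forced to be trivial.

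For the Picard groups: $X$ is projective and geometrically integral, so $\Picsch_{X/k}$ is representable by a group scheme locally of finite type over $k$, compatibly with base change, giving $(\Picsch_{X/k})_{\kbar} \isom \Picsch_{\Xbar/\kbar}$. Its tangent space at the identity is $\H^1(X,\OO_X)$, which vanishes because $X$ is a K3 surface, so $\Picsch_{X/k}$ is \'etale and $\Picsch_{X/k}(k) \to \Picsch_{X/k}(\kbar) = \Picsch_{\Xbar/\kbar}(\kbar)$ is a bijection. A smooth variety over a separably closed field has a rational point, so $X$ and $\Xbar$ each have one; hence $\Pic X = \Picsch_{X/k}(k)$ and $\Pic \Xbar = \Picsch_{\Xbar/\kbar}(\kbar)$, and composing these identifications with the previous bijection yields the first isomorphism, which is visibly the pullback map. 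The automorphism case is parallel: $\Autsch_{X/k}$ is representable by a group scheme locally of finite type over $k$ since $X$ is projective, compatibly with base change, and $\Autsch_{X/k}(k) = \Aut X$ while $\Autsch_{\Xbar/\kbar}(\kbar) = \Aut \Xbar$ by construction, with no Brauer-type obstruction to worry about. Its tangent space at the identity is $\H^0(X,T_X)$, which vanishes since a K3 surface carries no nonzero global vector field; hence $\Autsch_{X/k}$ is \'etale and $\Aut X = \Autsch_{X/k}(k) \to \Autsch_{X/k}(\kbar) = \Autsch_{\Xbar/\kbar}(\kbar) = \Aut \Xbar$ is a bijection.

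The only input that is not formal, and hence the main point, is the vanishing $\H^0(X,T_X)=0$ for K3 surfaces. In characteristic zero it is immediate from $h^{1,0}(X)=0$ together with $T_X \isom \Omega^1_X$ (using $\omega_X \isom \OO_X$), but in characteristics $2$ and $3$ the absence of global vector fields on K3 surfaces is a genuine theorem, which I would cite from the literature rather than reprove. Everything else rests on the representability of $\Picsch$ and $\Autsch$, the triviality of finite \'etale $k$-schemes, and the existence of rational points on smooth varieties over a separably closed field.
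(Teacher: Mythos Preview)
Your proof is correct and follows essentially the same approach as the paper's: both deduce the result from the \'etaleness of $\Picsch_{X/k}$ and $\Autsch_{X/k}$, obtained from the vanishing of $\H^1(X,\OO_X)$ and $\H^0(X,T_X)$ respectively, together with representability of these functors. The only cosmetic difference is that you identify $\Picsch_{X/k}(k)$ with $\Pic X$ via the existence of a rational point on $X$, whereas the paper uses that $\Picsch_{X/k}$ represents the \'etale sheafification of the Picard presheaf and that $k$ and $\kbar$ are separably closed.
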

\begin{proof}
As $X$ is projective, the Picard scheme $\Picsch_{X/k}$ exists, is separated and locally of finite type over $k$, and represents the sheaf $\Pic_{(X/k)(\et)}$, which is defined to be the sheafification on the big \'etale site over $k$ of the presheaf 
\[
T \mapsto \Pic(X \times_k T) / \Pic T
\]
(see~\cite[Theorem~9.4.8]{FAG}).  In particular, because $k$ and $\bar{k}$ are both separably closed, we have $\Picsch_{X/k}(k) = \Pic X$ and $\Picsch_{X/k}(\kbar) = \Pic \Xbar$.  
From $\H^1(X,\OO_X)=0$ it follows from~\cite[Theorem~9.5.11]{FAG} that $\Picsch_{X/k}$ is \'etale over $k$.
So every $\kbar$-point of $\Picsch_{X/k}$ is defined over $k$, and $\Pic X \to \Pic \Xbar$ is an isomorphism.

The functor taking a $k$-scheme $S$ to the group $\Aut(X \times_k S)$ is represented by a scheme $\Autsch_{X/k}$: see~\cite[Theorem~5.23]{FAG}. 
A standard argument in deformation theory shows that the tangent space at the identity element is isomorphic to $\H^0(X, T_X)$, where $T_X$ denotes the tangent sheaf on $X$.  
Indeed, an element of the tangent space is given by a morphism $S=\Spec k[\epsilon]/(\epsilon^2) \to \Autsch_{X/k}$ extending the morphism sending $\Spec k$ to the identity automorphism.  Such a morphism corresponds to an automorphism of $X \times_k S$ restricting to the identity on the central fibre.
By~\cite[Theorem~8.5.9]{FAG}, the set of these morphisms forms an affine space under $\H^0(X,T_X)$.
In our case, the group $\H^0(X, T_X)$ is zero \cite[Theorem~9.5.1]{huybrechts}, so the scheme $\Autsch_{X/k}$ is \'etale over $k$, and $\Aut X \to \Aut \Xbar$ is an isomorphism.
\end{proof}

\begin{corollary}\label{cor:sep-2}
In the situation of Lemma~\ref{lem:sep},  every $(-2)$-curve on $\Xbar$ is defined over $k$.
\end{corollary}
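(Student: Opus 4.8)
The plan is to exhibit an arbitrary $(-2)$-curve $\bar C$ on $\Xbar$ as the base change of an effective divisor on $X$ cut out by a global section of a line bundle. The first ingredient is the classical rigidity of curves of negative self-intersection: if $D$ is an effective divisor on $\Xbar$ with $[D] = [\bar C]$ in $\Pic \Xbar$, then either $\bar C$ is a component of $D$, in which case $D - \bar C$ is effective of class $0$ and hence $D = \bar C$, or $\bar C$ is not a component of $D$, in which case $D \cdot \bar C \ge 0$, contradicting $D \cdot \bar C = \bar C^2 = -2$. Thus $\bar C$ is the unique effective divisor in its class, and $\H^0(\Xbar, \OO_{\Xbar}(\bar C))$ is one-dimensional.

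Next I would descend the class. By Lemma~\ref{lem:sep} the isomorphism $\Pic X \to \Pic \Xbar$ identifies $[\bar C]$ with the class of a line bundle $\mathcal L$ on $X$, so that $\mathcal L_{\kbar} \isom \OO_{\Xbar}(\bar C)$. Since $X$ is projective over $k$ and $\kbar/k$ is flat, cohomology commutes with this base change, giving $\H^0(X, \mathcal L) \otimes_k \kbar \isom \H^0(\Xbar, \OO_{\Xbar}(\bar C))$; by the previous paragraph this space is one-dimensional over $k$, in particular nonzero. Choosing a generator $s \in \H^0(X, \mathcal L)$, its divisor of zeros $C$ is an effective divisor on $X$ with $[C] = [\mathcal L]$.

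It remains to identify the base change $C_{\kbar}$. The section $s$ maps to a nonzero element of the one-dimensional space $\H^0(\Xbar, \OO_{\Xbar}(\bar C))$, that is, to a nonzero scalar multiple of the section cutting out $\bar C$; hence $C_{\kbar} = \bar C$, and $\bar C$ is defined over $k$. Faithful flatness of $\kbar/k$ moreover forces $C$ to be integral with $C^2 = -2$, so $C$ is itself a $(-2)$-curve on $X$. I expect the only genuine subtlety to be the rigidity statement in the first paragraph, which is nevertheless standard; everything else is formal. It is worth noting that no Galois descent enters the argument, which is fortunate since $\kbar/k$ may be a nontrivial purely inseparable extension carrying no Galois action; the descent is achieved instead through the explicit section $s$.
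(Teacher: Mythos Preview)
Your proof is correct and follows essentially the same approach as the paper: descend the line bundle via Lemma~\ref{lem:sep}, use flat base change to identify $\H^0(X,\mathcal L)$ with a one-dimensional space, and take the zero locus of a generator. The only cosmetic difference is in justifying $h^0(\Xbar,\OO_{\Xbar}(\bar C))=1$: the paper invokes Riemann--Roch, whereas you give the direct rigidity argument that $\bar C$ is the unique effective divisor in its class; both are standard and equally valid.
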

\begin{proof}
Let $\bar{C}$ be a $(-2)$-curve on $\Xbar$.  Then Lemma~\ref{lem:sep} shows that there is a line bundle $L$ on $X$ whose base change to $\Xbar$ is isomorphic to $\OO_{\Xbar}(\bar{C})$.  The Riemann--Roch theorem and flat base change give $h^0(X,L) = h^0(\Xbar,\OO_{\Xbar}(\bar{C})) = 1$.  So all nonzero sections of $L$ cut out the same divisor $C \subset X$ and the base change of $C$ to $\Xbar$ must coincide with $\bar{C}$.  In other words, $\bar{C}$ is defined over $k$.
\end{proof}

We now pass to the case of a general field.  Let $k$ be a field; fix
an algebraic closure $\kbar$ of $k$, and let $\ksep$ be the separable
closure of $k$ in $\kbar$.  Let $X$ be a K3 surface over $k$, and let
$\Xsep$ and $\Xbar$ denote the base changes of $X$ to $\ksep$ and
$\kbar$, respectively.  Write $\Gk = \Gal(\ksep/k)$.

The group $\Gk$ acts on $\Pic \Xsep$ preserving intersection numbers, giving a representation $\Gk \to \O(\Pic \Xsep)$.
Let $\Gk$ act on $\O(\Pic \Xsep)$ by conjugation, that is, such that $(\sigmaf{\sigma}{f})(x) = \sigma(f(\sigma^{-1} x))$ for all $x \in \Pic \Xsep$.
For a $(-2)$-class
$\alpha \in \Pic \Xsep$, denote the reflection in $\alpha$ by~$r_\alpha$; then we have $(\sigmaf{\sigma}{r_\alpha}) = r_{\sigma\alpha}$.
So the action of $\Gk$ on $\O(\Pic \Xsep)$ restricts to an action on $W(\Pic \Xsep)$.

\begin{defn}\label{defn:rx} Define $R_X$ to be the group
  $W(\Pic \Xsep)^\Gk$.
\end{defn}
  
Recall that $\Pic X$ is contained in, but not necessarily equal to, the fixed subgroup $(\Pic \Xsep)^\Gk$.
The Hochschild--Serre spectral sequence gives rise to an exact sequence
\[
0 \to \Pic X \to (\Pic \Xsep)^\Gk \to \Br k \to \Br X.
\]
If $X$ has a $k$-point, then evaluation at that point gives a left inverse to $\Br k \to \Br X$, showing that $\Pic X \to (\Pic \Xsep)^\Gk$ is an isomorphism.
In general this does not have to be true.  However, because $\Br k$ is torsion and $\Pic \Xsep$ is finitely generated, the above sequence shows that $\Pic X$ is of finite index in $(\Pic \Xsep)^\Gk$.

It is easy to see that the action of $R_X$ on $\Pic\Xsep$ preserves $(\Pic\Xsep)^\Gk$, but it is not immediately obvious that this action preserves $\Pic X$.  To show that this is the case, we use an explicit description of $R_X$ provided by a theorem of H\'ee and Lusztig, for which
Geck and Iancu gave a simple proof.  Before stating their theorem, we establish some notation and
conventions for Coxeter systems.

\begin{defn}\label{def:coxeter} Let $W$ be a group generated by a
  set $T \subset W$ of elements of order~$2$.  For $t_i, t_j \in T$, let
  $n_{i,j} = n_{j,i}$ be the order of $t_i t_j$ if $t_i t_j$ has finite order
  and~$0$ otherwise.  Suppose that the relations
  $t_i^2 = 1, (t_i t_j)^{n_{i,j}} = 1$ for $i,j$ with $n_{i,j} \ne 0$
  are a presentation of~$W$.  Then $(W,T)$ is a \emph{Coxeter system.}

  Let $G$ be a graph with vertices $T$ and such that $t_i, t_j$ are adjacent
  in $G$ if and only if $t_i$ does not commute with $t_j$; in this case, label
  the edge joining $t_i$ to $t_j$ with $n_{i,j} - 2$ for $n_{i,j} > 0$ and $0$
  otherwise.  We refer to $G$ as the
  \emph{Coxeter--Dynkin diagram} of $(W,T)$.  
  The Coxeter system $(W,T)$ is said to be \emph{irreducible} if its Coxeter--Dynkin diagram is connected.

  Let the {\em length} $\ell(w)$ of an element $w \in W$ be the length of a
  shortest word in the $t_i$ that represents it.  If $W$ is finite, there
  is $w_0 \in W$ such that $\ell(w_0) > \ell(w)$ for all
  $w \ne w_0 \in W$ (see \cite[Proposition 2.3.1]{bjorner-brenti}).
  We refer to $w_0$ as the \emph{longest element} of~$W$.

  Let $\sigma$ be a permutation of $T$.  Then there is at most one way to
  extend $\sigma$ to a homomorphism $W \to W$, because $T$ generates
  $W$.  If there is such an extension, it is an automorphism, because
  $\sigma^{-1}$ extends to its inverse, and we speak of it as the automorphism
  {\em induced by $\sigma$}.
\end{defn}

If $(W,T)$ is a Coxeter system, and $I$ is a subset of $T$, let $W_I$ denote the subgroup of $W$ generated by the elements of $I$.
Then $(W_I,I)$ is a Coxeter system: see \cite[Proposition 2.4.1 (i)]{bjorner-brenti}.

\begin{theorem}[\cite{geck-iancu}, Theorem~1]\label{thm:gi}
  Let $(W,T)$ be a Coxeter system.  Let
  $G$ be a group of permutations of $T$ that induce automorphisms of $W$.
  Let $F$ be the set of orbits $I \subset T$ for which
  $W_I$ is finite, and for $I \in F$ let $w_{I,0}$  be the longest element
  of $(W_I,I)$.  Then $(W^G,\{w_{I,0}: I \in F\})$ is a Coxeter system.
\end{theorem}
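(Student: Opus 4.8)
The plan is to verify directly that the pair $(W^G, S)$, with $S := \{w_{I,0} : I \in F\}$, satisfies the two defining properties of a Coxeter system: that $S$ generates $W^G$, and that the only relations among the elements of $S$ are $w_{I,0}^2 = 1$ together with $(w_{I,0}w_{J,0})^{m_{I,J}} = 1$, where $m_{I,J}$ denotes the order of $w_{I,0}w_{J,0}$. The first, essentially formal, observations are that $S \subseteq W^G$ and that each $w_{I,0}$ is an involution. Indeed, since $G$ permutes the orbit $I$, it acts on the finite Coxeter system $(W_I, I)$ through automorphisms induced by permutations of $I$, and these preserve the length function; as the longest element of a finite Coxeter system is its unique element of maximal length, $w_{I,0}$ is fixed by $G$. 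Moreover $w_{I,0}^{-1}$ is again the longest element of $(W_I,I)$, so $w_{I,0}^{-1} = w_{I,0}$ and $w_{I,0}^2 = 1$. By the standard characterisation of Coxeter systems via the Exchange Condition (see \cite{bjorner-brenti}), it then suffices to prove that $S$ generates $W^G$ and that $(W^G, S)$ satisfies the Exchange Condition.

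For the generation statement I would induct on the length $\ell$ in $(W,T)$. Let $w \in W^G$ with $w \ne 1$; its left descent set $D_L(w) = \{t \in T : \ell(tw) < \ell(w)\}$ is non-empty, and since $w$ is $G$-fixed and $G$ preserves $\ell$, the set $D_L(w)$ is stable under $G$. A standard fact, which follows from the parabolic coset factorisation (see \cite{bjorner-brenti}), is that $W_{D_L(w)}$ is finite with longest element a left divisor of $w$; hence $D_L(w)$ is a finite disjoint union of $G$-orbits, each of which generates a finite subgroup of $W$ and so belongs to $F$. Choose one such orbit $I \subseteq D_L(w)$. Factoring $w = w_I \cdot {}^{I}w$ with $w_I \in W_I$ and ${}^{I}w$ the shortest element of $W_I w$, the left descent set of $w_I$ in $(W_I, I)$ equals $D_L(w) \cap I = I$, so $w_I = w_{I,0}$; thus $w = w_{I,0} \cdot {}^{I}w$ with $\ell(w) = \ell(w_{I,0}) + \ell({}^{I}w)$. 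In particular ${}^{I}w = w_{I,0}w$ lies in $W^G$ and has strictly smaller length, so by the inductive hypothesis it, and therefore $w$, lies in the subgroup generated by $S$.

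The Exchange Condition is the substantive part, and the step I expect to be the main obstacle. The basic tool is the dichotomy: for $w \in W^G$ and $w_{I,0} \in S$ one has $\ell(w_{I,0}w) = \ell(w) \pm \ell(w_{I,0})$, with the minus sign occurring exactly when $I \subseteq D_L(w)$. (If $\ell(w_{I,0}w) < \ell(w_{I,0}) + \ell(w)$, then some $t \in I$ lies in $D_L(w)$ because the right descent set of $w_{I,0}$ is all of $I$; as $D_L(w)$ is $G$-stable and $I$ is an orbit, this forces $I \subseteq D_L(w)$, and then the factorisation above gives $w = w_{I,0} \cdot {}^{I}w$ with lengths adding, so $\ell(w_{I,0}w) = \ell(w) - \ell(w_{I,0})$.) Iterating the generation argument produces, for every $w \in W^G$, an expression $w = w_{I_1,0} \cdots w_{I_r,0}$ in the generators $S$ for which the $T$-lengths add up, $\ell(w) = \sum_{j} \ell(w_{I_j,0})$. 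The crux is to show that such ``additive'' expressions are precisely the $S$-reduced ones, and then to transport the Exchange Condition from $(W,T)$ down to $(W^G, S)$: given an $S$-reduced expression $w = w_{I_1,0} \cdots w_{I_r,0}$ and a generator $w_{I,0}$ with $\ell(w_{I,0}w) < \ell(w)$, one must locate an index $j$ such that $w_{I,0}\, w_{I_1,0} \cdots w_{I_{j-1},0} = w_{I_1,0} \cdots w_{I_j,0}$, extracted by matching the $T$-letters on the two sides. The delicate point is controlling the interaction between $\ell$ and the length function of $(W^G, S)$; I would handle this by descending to the finite parabolic subgroups $W_{I \cup J}$ for $I, J \in F$ with $W_{I \cup J}$ finite, and analysing $(W_{I \cup J})^G$ rank-two piece by rank-two piece --- in effect reading off $m_{I,J}$ and the dihedral relations from the classification of diagram automorphisms of finite irreducible Coxeter systems.

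A more geometric route to the ``only relations'' part (still needing the descent-set argument for generation) is to pass to the Tits representation $V = \bigoplus_{t \in T} \R e_t$ of $(W,T)$, on which $G$ acts by permuting basis vectors compatibly with the $W$-action. For $I \in F$ the $G$-fixed subspace of $\bigoplus_{t \in I} \R e_t$ is the line spanned by $f_I := \sum_{t \in I} e_t$, on which $B(f_I,f_I) > 0$ for the Tits form $B$, and $w_{I,0}$ acts on the fixed subspace $V^G$ as the reflection in $f_I$. Since $f_I$ and $f_J$ involve disjoint basis vectors, $B(f_I, f_J) \le 0$ for distinct $I, J \in F$, so the family $\{f_I\}_{I \in F}$ is a root basis and $(W^G, S)$ is the associated reflection group, hence a Coxeter system with Coxeter matrix determined by the values $B(f_I, f_J)$ --- the admissibility of these values being, once again, the rank-two point. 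Either way, the heart of the matter is the rank-two analysis inside finite parabolic subgroups.
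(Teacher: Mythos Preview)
The paper does not give its own proof of this theorem; it is quoted from Geck--Iancu (building on H\'ee and Lusztig) and used as a black box in the proof of Proposition~\ref{rx}. So there is no argument in the paper to compare against, and your proposal should be judged on its own.

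Your outline is along the right lines and coincides with the standard proofs. The generation step is essentially complete and correct: the left descent set $D_L(w)$ of a $G$-fixed $w$ is $G$-stable; $W_{D_L(w)}$ is finite with longest element a left divisor of $w$; any $G$-orbit $I \subseteq D_L(w)$ lies in $F$; and the parabolic factorisation then gives $w = w_{I,0}\cdot {}^{I}w$ with lengths adding, so induction applies. The dichotomy $\ell(w_{I,0}w)=\ell(w)\pm\ell(w_{I,0})$ is also correctly derived.

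The Exchange Condition you honestly leave as a sketch, and this is indeed where the work lies. One remark on the geometric route: you can sidestep the ``admissibility of the values $B(f_I,f_J)$'' issue entirely. Your observation that $w_{I,0}(v)-v\in V_I\cap V^G=\R f_I$ for $v\in V^G$, together with $w_{I,0}(f_I)=-f_I$, already shows that $w_{I,0}$ acts on $V^G$ as the genuine $B$-reflection in $f_I$. Faithfulness of $W^G$ on $V^G$ holds because a $G$-averaged interior point of the fundamental chamber $C$ of $(W,T)$ lies in $C\cap V^G$ and has trivial $W$-stabilizer. Now $C\cap V^G$ is a non-empty open cone whose walls are exactly the hyperplanes $f_I^\perp$ (for $I\in F$), and $W^G$ permutes its $W^G$-translates simply transitively, this being inherited from the simple transitivity of $W$ on the $W$-chambers in $V$. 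Tits' or Vinberg's chamber criterion then yields that $(W^G,S)$ is a Coxeter system, with no need to compute the $m_{IJ}$ or to classify the rank-two pieces. The combinatorial route also works, but the clean way to organise it (as in Geck--Iancu) is to count $G$-orbits of positive roots made negative, rather than to attempt a direct rank-two case analysis.
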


We will apply this theorem with $W=W(\Pic \Xsep)$ and $T$ being the set of reflections in $(-2)$-curves on $\Xsep$.

\begin{proposition}\label{rx}
Let $F$ be the set of Galois orbits $I$ of $(-2)$-curves on $\Xsep$ of the following two types:
\begin{enumerate}[(i)]
\item $I$ consists of disjoint $(-2)$-curves;
\item $I$ consists of disjoint pairs of $(-2)$-curves, each pair having intersection number 1.
\end{enumerate}
Then the following statements hold.
\begin{enumerate}
\item\label{rx1} For each $I \in F$, let $W_I$ be the subgroup of $W(\Pic \Xsep)$ generated by reflections in the classes of curves in $I$, and let $r_I$ be the longest element of the Coxeter system $(W_I,I)$.  Then $(R_X, \{r_I: I \in F\})$ is a Coxeter system.
\item\label{rx2} For each $I \in F$, let $C_I \in (\Pic \Xsep)^\Gk$ be the sum of the classes in $I$.  Then $r_I$ acts on $(\Pic \Xsep)^\Gk$ as reflection in the class $C_I$.
\item\label{rx3} The action of $R_X$ on $\Pic\Xsep$ preserves $\Pic X$.
\end{enumerate}
\end{proposition}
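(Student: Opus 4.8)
The plan is to deduce this from parts~(\ref{rx1}) and~(\ref{rx2}), which between them give an explicit generating set for $R_X$ together with its action. Write $M = (\Pic \Xsep)^\Gk$, and recall that $\Pic X$ injects into $M$ as a subgroup of finite index. By part~(\ref{rx1}), $R_X$ is generated by the elements $r_I$ with $I \in F$; by part~(\ref{rx2}), each such $r_I$ acts on $M$ as the reflection in $C_I$, in particular as an involution of $M$. It therefore suffices to show $r_I(\Pic X) \subseteq \Pic X$ for every $I$: applying $r_I$ once more and using $r_I^2 = \mathrm{id}_M$ gives the reverse inclusion, hence $r_I(\Pic X) = \Pic X$, and as the $r_I$ generate $R_X$ this proves~(\ref{rx3}).

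The key point is that $C_I$ lies in $\Pic X$, not merely in $M$. Every $(-2)$-curve on $\Xsep$ is defined over some finite extension of $k$, so all curves in the orbit $I$ are defined over a common finite Galois extension $L/k$, on which $\Gal(L/k)$ permutes them; thus the reduced effective divisor $\sum_{C \in I} C$ on $X_L$ is $\Gal(L/k)$-stable, and by Galois descent for closed subschemes it descends to a reduced, pure one-dimensional closed subscheme of $X$. As $X$ is smooth this is an effective Cartier divisor $D$, and $[D] \in \Pic X$ maps to $C_I$ under the injection $\Pic X \hookrightarrow \Pic \Xsep$; hence $C_I \in \Pic X$.

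Finally, for $x \in \Pic X \subseteq M$ one has $r_I(x) - x = -\tfrac{2(x\cdot C_I)}{C_I^2}\,C_I$ by part~(\ref{rx2}), and I would check that this coefficient is an \emph{integer}; granting this, $r_I(x) = x + (\text{integer})\cdot C_I$ lies in $\Pic X$ because both $x$ and $C_I$ do. Since $x$ is $\Gk$-invariant and $I$ is a single Galois orbit, every curve $C$ in $I$ has the same intersection number $c := x\cdot C \in \Z$ with $x$. A direct computation in each of the two cases of Proposition~\ref{rx}---using $C^2 = -2$ for $C \in I$, and in case~(ii) that each pair meets in $1$ while distinct pairs are orthogonal---gives $C_I^2 = -2\,\#I$ and $x\cdot C_I = c\,\#I$ in case~(i), and $C_I^2 = -\#I$ and $x\cdot C_I = c\,\#I$ in case~(ii), so that the coefficient equals $c$ or $2c$ respectively. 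The only step that is not purely formal is the descent argument showing $C_I \in \Pic X$: this is exactly the subtlety flagged before the proposition, since $R_X$ manifestly preserves the possibly-larger group $(\Pic \Xsep)^\Gk$, and the extra input needed to descend to $\Pic X$ is precisely that the reflecting classes $C_I$ come from $X$.
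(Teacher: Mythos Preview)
Your argument for part~(\ref{rx3}) is correct and essentially identical to the paper's: both note that $C_I$ is the class of a Galois-stable effective divisor on $\Xsep$ and hence lies in $\Pic X$ (the paper says this in one sentence; your descent argument spells it out), and both then read off from the explicit action of $r_I$ on $(\Pic\Xsep)^\Gk$ that $r_I(x)-x$ is an integer multiple of $C_I$ for $x\in\Pic X$. Your coefficients $c$ and $2c$ are exactly the $(D\cdot E_1)$ and $2(D\cdot E_1)$ in the paper's displayed formulas for $r_I(D)$ in the two cases.

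However, you take parts~(\ref{rx1}) and~(\ref{rx2}) as given, and these carry most of the content of the proposition. The substantive step is~(\ref{rx1}): one must first verify that a Galois orbit $I$ of $(-2)$-curves has $W_I$ finite \emph{if and only if} $I$ is of type~(i) or~(ii). The paper does this by observing that two curves in $I$ meeting with multiplicity $\ge 2$ would produce an infinite dihedral subgroup, and that the Coxeter--Dynkin diagram of a finite Coxeter group is a forest; since $\Gk$ acts transitively on the vertices, every vertex has the same degree, which must therefore be $0$ or $1$. Only after this classification does Theorem~\ref{thm:gi} apply to give the Coxeter presentation of $R_X$. Part~(\ref{rx2}) is then a direct computation of the longest element in $W_I\cong A_1^r$ or $A_2^r$ and of its action on Galois-invariant classes. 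Without these, your proposal is a correct proof of~(\ref{rx3}) but not of the full proposition.
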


\begin{proof}
Let $I$ be a Galois orbit of $(-2)$-curves, and suppose that the subgroup $W_I$ is finite.
We will show that $I$ is of one of the two types described.
Firstly, no two $(-2)$-curves in $I$ have intersection number greater than $1$, for then the corresponding reflections would generate an infinite dihedral subgroup of $W_I$.
Since $W_I$ is finite, its Coxeter--Dynkin diagram is a finite union of trees~\cite[Exercise~1.4]{bjorner-brenti}.
In particular, it contains a vertex of degree $\le 1$.
However, the Galois group $\Gk$ acts transitively on the diagram, so we conclude that either every vertex has degree $0$, or every vertex has degree $1$.  These two possibilities correspond to the two types of orbits described.
Now~(\ref{rx1}) follows from Theorem~\ref{thm:gi}.

We prove~(\ref{rx2}) separately for the two types of orbits.
In the first case we have $I = \{ E_1, \dotsc, E_r \}$.  
The reflections in the $E_i$ all commute, so $W_I$ is isomorphic to the Coxeter group $A_1^r$.
The longest element is $r_I = r_{E_1} \circ \dotsb \circ r_{E_r}$.
For $D \in (\Pic \Xsep)^\Gk$, the intersection numbers $D \cdot E_i$ are all equal, and one calculates
\[
r_I(D) = D + (D \cdot E_1)(E_1 + \dotsb + E_r),
\]
that is, $r_I$ coincides on $(\Pic \Xsep)^\Gk$ with reflection in the class $C_I = E_1 + \dotsb E_r$, of self-intersection $-2r$.

In the second case, write $I = \{ E_1, E'_1, \dotsc, E_r, E'_r \}$, where $E_i \cdot E'_i = 1$ and the other intersection numbers are all zero.
The two reflections $r_{E_i}$ and $r_{E'_i}$ generate a subgroup isomorphic to the Coxeter group $A_2$, in which the longest element is
\[
r_I = r_{E_i} \circ r_{E'_i} \circ r_{E_i} = r_{E'_i} \circ r_{E_i} \circ r_{E'_i} = r_{E_i + E'_i}.
\]
Thus we have $W_I \cong A_2^r$ and the longest element in $W_I$ is the product of the longest elements in the factors $A_2$, that is, it equals $r_I = r_{E_1 + E'_1} \circ \dotsb \circ r_{E_r + E'_r}$.
For $D \in (\Pic \Xsep)^\Gk$, all the $D \cdot E_i$ and $D \cdot E'_i$ are equal, and we have
\[
r_I(D) = D + 2(D \cdot E_1)(E_1 + E'_1 + \dotsb + E_r + E'_r),
\]
which coincides with reflection of $D$ in the class $C_I = E_1 + E'_1 + \dotsb + E_r + E'_r$, of self-intersection $-2r$.

Finally, each class $C_I$ is, by construction, the class of a Galois-fixed divisor on $\Xsep$, so lies in $\Pic X$.  So in both cases the formula for $r_I$ given above shows that reflection in $C_I$ preserves $\Pic X$, and therefore the action of $R_X$ preserves $\Pic X$, proving~(\ref{rx3}).
\end{proof}

We now turn to the ample and nef cones.
As ampleness is a geometric property, and the nef cone is the closure of the ample cone over any base field, it follows that 
\[
\Amp(X) = \Amp(\Xbar) \cap (\Pic X)_\R \text{ and } \Nef(X) = \Nef(\Xbar) \cap (\Pic X)_\R,
\]
the intersections taking place inside $(\Pic\Xbar)_\R$.
The following result is well known when $k$ is algebraically closed (see~\cite[Corollary~8.2.11]{huybrechts}), and descends easily to arbitrary $k$.

\begin{proposition}\label{prop:fundamental-domain}
Let $X$ be a K3 surface over $k$.  The cone $\Nef(X) \cap \pos_X$ is a fundamental domain for the action of $R_X$ on the positive cone $\pos_X$, and this action is faithful.
\end{proposition}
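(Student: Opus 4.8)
The plan is to reduce the statement to the known algebraically closed case, Theorem~\ref{thm:old}(1) applied to $\Xbar$, using the explicit generators of $R_X$ furnished by Proposition~\ref{rx}. First I would recall from Corollary~\ref{cor:sep-2} (applied after base change to $\ksep$) that every $(-2)$-curve on $\Xbar$ is already defined over $\ksep$, so that the Galois group $\Gk$ acts on the set of $(-2)$-curves on $\Xsep$, and $W(\Pic\Xsep)$ is generated by the reflections $r_\alpha$ in their classes. By Theorem~\ref{thm:old}(1), $\Nef(\Xbar)\cap\pos_{\Xbar}$ is a fundamental domain for the action of $W(\Pic\Xbar)=W(\Pic\Xsep)$ on $\pos_{\Xbar}$, and this action is faithful (faithfulness is part of the usual statement, or follows because a nontrivial element of the Weyl group moves an interior point of the fundamental chamber).

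The key observation is that $\pos_X = \pos_{\Xbar}\cap(\Pic X)_\R$ and $\Nef(X)\cap\pos_X = (\Nef(\Xbar)\cap\pos_{\Xbar})\cap(\Pic X)_\R$, and that $R_X = W(\Pic\Xsep)^\Gk$ acts on $(\Pic X)_\R$ by Proposition~\ref{rx}(\ref{rx3}). So I would prove three things. First, faithfulness: the action of $R_X$ on $(\Pic X)_\R$ is the restriction of the faithful action of $W(\Pic\Xsep)$ on $(\Pic\Xsep)_\R$, and since $(\Pic X)_\R$ spans the same $\Gk$-fixed subspace up to finite index and the generators $r_I$ act as honest reflections in the classes $C_I\in\Pic X$ (Proposition~\ref{rx}(\ref{rx2})), no nontrivial element of $R_X$ can act trivially on $(\Pic X)_\R$ — if it did, it would fix $(\Pic\Xsep)^\Gk_\R$ pointwise, hence fix an interior point of some Weyl chamber, contradicting faithfulness of $W(\Pic\Xsep)$. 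Second, covering: given $\alpha\in\pos_X$, by the algebraically closed case there is $w\in W(\Pic\Xsep)$ with $w\alpha\in\Nef(\Xbar)\cap\pos_{\Xbar}$; the point is to show $w$ can be taken in $R_X$. For this I would use that $\alpha$ is $\Gk$-fixed, so $\sigma(w\alpha)=(\sigmaf{\sigma}{w})\alpha$ also lies in the (Galois-invariant) nef cone, and then invoke uniqueness: the fundamental domain property says $w\alpha$ is the \emph{unique} point of $\Nef(\Xbar)\cap\pos_{\Xbar}$ in the $W$-orbit of $\alpha$ (for $\alpha$ in the interior; one handles the boundary by a limiting/standard argument), whence $\sigmaf{\sigma}{w}\cdot w^{-1}$ fixes $w\alpha$; chasing this for a generic $\alpha$ shows $w$ lies in a $\Gk$-stable coset, and then the structure of $R_X$ as the fixed-point Coxeter group (Proposition~\ref{rx}(\ref{rx1})) lets one replace $w$ by its $R_X$-part. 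Third, the interior-disjointness condition for a fundamental domain: if $g,h\in R_X$ are distinct and $g\overline{U}\cap h\overline{U}$ contained an interior point, the same would be true inside $\pos_{\Xbar}$ for $g,h\in W$, contradicting the algebraically closed case.

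The main obstacle I anticipate is the covering step — showing that the element $w$ moving $\alpha$ into the nef cone can be chosen Galois-invariant. The clean way is: for $\alpha$ in the interior of $\pos_X$, the point $w\alpha$ is uniquely determined, so the map $\alpha\mapsto w\alpha$ is $\Gk$-equivariant; picking $\alpha$ with trivial $W$-stabilizer (a generic point), $w$ is then uniquely determined by $\alpha$, and $\Gk$-equivariance of $\alpha\mapsto w\alpha$ together with $\sigma\alpha=\alpha$ forces $\sigmaf{\sigma}{w}=w$, i.e.\ $w\in W^\Gk=R_X$. This handles interior $\alpha$; the general case follows because $\Nef(X)\cap\pos_X$ is closed and $\pos_X$ is the closure of its interior, or because the orbit of a boundary point is a limit of interior orbits. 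Alternatively, one can argue more structurally using the Bruhat-type decomposition of $W$ with respect to the parabolic generated by the chamber-stabilizer and deduce that every $\Gk$-fixed left coset meets $W^\Gk$; but the equivariance argument above is shorter and suffices.
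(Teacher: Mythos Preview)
Your proposal is correct and follows essentially the same route as the paper. The covering step is identical: for $\alpha\in\pos_X$ with trivial stabiliser in $W(\Pic\Xbar)$, the unique $w\in W(\Pic\Xbar)$ with $w\alpha\in\Nef(\Xbar)$ satisfies $\sigmaf{\sigma}{w}=w$ by uniqueness and Galois-invariance of $\alpha$ and of the nef cone, hence $w\in R_X$; points with nontrivial stabiliser are handled by approximation. (The paper makes the limiting step precise using the local polyhedrality of the chamber decomposition inside $\pos_{\Xbar}$, so that a sequence approaching $\alpha$ can be chosen entirely within a single chamber and a single $g\in R_X$ works throughout --- this is exactly the content of your ``limiting/standard argument''.)

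The only noteworthy difference is in the interior-disjointness step. You argue that an interior point of $\Nef(X)$ in $(\Pic X)_\R$ is ample, hence ample on $\Xbar$, hence interior in $\Nef(\Xbar)$, which immediately contradicts the fundamental-domain property over $\Xbar$ if it also lies in a nontrivial $R_X$-translate. The paper instead proves a general convex-geometry lemma (Lemma~\ref{lem:boundary}) to the effect that $\partial_S(C\cap S)=\partial_V(C)\cap S$ whenever $S$ meets the interior of $C$. Your route is shorter here because the specific equality $\mathrm{int}\,\Nef(X)=\Amp(X)=\Amp(\Xbar)\cap(\Pic X)_\R$ is already available; the paper's lemma is a cleaner separation of the convex geometry from the algebraic geometry, but both arguments are equivalent in this setting.
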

\begin{proof}
We will prove two things: first, that every class in $\pos_X$ is $R_X$-equivalent to an element of $\Nef(X) \cap \pos_X$; and second, that the translates of $\Nef(X) \cap \pos_X$ by two distinct elements of $R_X$ meet only along their boundaries.
The second of these shows in particular that the action is faithful.
(When we refer to the boundary of $\Nef(X)$ or one of its translates, we mean the boundary within $(\Pic X)_\R$.  The boundary of $\Nef(X) \cap \pos_X$ in $\pos_X$ is just the boundary of $\Nef(X)$ intersected with $\pos_X$, so that distinction is not so important.)

To prove the first statement, let $D \in \pos_X$.  Suppose first that $D$ has trivial stabilizer in $W(\Pic \Xbar)$.  Then, by~\cite[Corollary~8.2.11]{huybrechts}, there exists a unique $g \in W(\Pic \Xbar) = W(\Pic \Xsep)$ such that $gD$ lies in the interior of $\Nef(\Xbar) \cap \pos_\Xbar$.  We claim that $g$ lies in $R_X$.  
For any $\sigma \in \Gk$, we have
\[
(\sigmaf{\sigma}{g}) D = \sigma(g(\sigma^{-1} D)) = \sigma (g(D)) \in \Nef(\Xbar) \cap \pos_\Xbar
\]
since the Galois action preserves the properties of being nef and positive.  By uniqueness of $g$, we conclude that $g = \sigmaf{\sigma}{g}$, that is, $g$ lies in $R_X$.  It then follows that $gD$ lies in $((\Pic \Xsep)_\R)^\Gk = (\Pic X)_\R$ and therefore in $\Nef(X) \cap \pos_X$.

Now suppose that $D \in \pos_X$ has non-trivial stabilizer.  Then $D$ lies on at least one of the walls defined by the action of $W(\Pic \Xbar)$ on $(\Pic\Xbar)_\R$; see~\cite[Section~8.2]{huybrechts}.
By~\cite[Proposition~8.2.4]{huybrechts}, the chamber structure of this group action is locally polyhedral within $\pos_\Xbar$, so a small enough neighbourhood of $D$ meets only finitely many chambers.
Also note that $\Pic X$ is not contained in any of the walls, because $X$ admits an ample divisor.  This allows us to construct a sequence $(D_i)_{i=1}^\infty$ of elements of $\pos_X$, tending to $D$ and all lying in the interior of the same chamber of $\pos_{\bar{X}}$.  As in the previous paragraph, there is a unique $g \in R_X$ satisfying $g D_i \in \Nef(X) \cap \pos_X$ for all $i$.  By continuity, $gD$ also lies in $\Nef(X) \cap \pos_X$.

We now prove the second statement, that $\Nef(X) \cap \pos_X$ intersects the translate by any non-trivial element of $R_X$ only in its boundary.
Suppose that $x \in \pos_X$ lies in the intersection $\Nef(X) \cap g \Nef(X)$, for some non-trivial $g \in R_X$.  By~\cite[Corollary~8.2.11]{huybrechts}, we see that $x$ lies in the boundary of $\Nef(\Xbar)$.
The following lemma shows that $x$ lies in the boundary of $\Nef(X)$.
\end{proof}

\begin{lemma}\label{lem:boundary}
Let $V$ be a real vector space, let $C \subset V$ be a closed convex cone, and let $S \subset V$ be a subspace having non-empty intersection with the interior of $C$.  Then we have
\[
\partial_S(C \cap S) = \partial_V(C) \cap S.
\]
\end{lemma}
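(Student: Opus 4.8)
The plan is to reduce the lemma to the single identity
\[
\mathrm{int}_S(C \cap S) = \mathrm{int}_V(C) \cap S,
\]
after which the statement about boundaries is a formal point-set manipulation. Working with the vector space topology on $V$ (in our application $V = (\Pic X)_\R$ is finite-dimensional, so there is a canonical choice), note that the inclusion $\iota\colon S \hookrightarrow V$ is continuous, so $C \cap S = \iota^{-1}(C)$ is closed in $S$; hence $\partial_S(C \cap S) = (C \cap S) \setminus \mathrm{int}_S(C \cap S)$. Since $C$ is closed in $V$ we also have $\partial_V(C) = C \setminus \mathrm{int}_V(C)$. Granting the displayed identity, I would then simply compute
\[
\partial_S(C \cap S) = (C \cap S) \setminus \bigl(\mathrm{int}_V(C) \cap S\bigr) = \bigl(C \setminus \mathrm{int}_V(C)\bigr) \cap S = \partial_V(C) \cap S.
\]
Observe that only the convexity and closedness of $C$, together with the hypothesis $S \cap \mathrm{int}_V(C) \neq \emptyset$, are used; the homogeneity of the cone plays no role.

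It remains to establish the identity. The inclusion $\supseteq$ is immediate and needs no hypothesis: if $x \in \mathrm{int}_V(C) \cap S$, a neighbourhood of $x$ in $V$ contained in $C$ meets $S$ in a neighbourhood of $x$ in $S$ contained in $C \cap S$. For the inclusion $\subseteq$ — the only place the hypothesis enters — I would fix a point $y \in S \cap \mathrm{int}_V(C)$ and take an arbitrary $x \in \mathrm{int}_S(C \cap S)$, which we may assume is distinct from $y$. Because $x$ is interior to $C \cap S$ relative to $S$, the segment in $S$ from $y$ through $x$ can be prolonged slightly past $x$ while remaining in $C \cap S$: there exist $z \in C \cap S$ and $t \in (0,1)$ with $x = (1-t)z + ty$. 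Thus $x$ lies on the half-open segment joining $z \in C$ to $y \in \mathrm{int}_V(C)$, and the classical fact that such a segment (with its endpoint $z$ removed) is contained in $\mathrm{int}_V(C)$ shows $x \in \mathrm{int}_V(C)$; combined with $x \in S$ this gives $x \in \mathrm{int}_V(C) \cap S$, as required.

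The one genuinely non-formal ingredient — which I expect to be the main (indeed essentially the only) obstacle — is the convexity fact just invoked: if $y \in \mathrm{int}_V(C)$ and $z \in C$, then $(1-t)z + ty \in \mathrm{int}_V(C)$ for every $t \in (0,1]$. This is standard in convex analysis, but it admits a short self-contained argument: choose a neighbourhood $B$ of $y$ with $B \subseteq C$; by convexity $(1-t)z + tB \subseteq C$, and for $t > 0$ the affine map $b \mapsto (1-t)z + tb$ is a homeomorphism of $V$, so $(1-t)z + tB$ is a neighbourhood of $(1-t)z + ty$, whence $(1-t)z + ty \in \mathrm{int}_V(C)$. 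With this in hand the proof of the lemma is only a few lines.
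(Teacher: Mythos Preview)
Your proof is correct, but it follows a genuinely different route from the paper's. The paper works via duality: it characterises boundary points of a closed convex set as those admitting a supporting hyperplane, identifies the dual cone $(C\cap S)^*$ with the image of $C^*$ under restriction $f\colon V^*\to S^*$ (citing Rockafellar), and then passes supporting functionals back and forth between $V$ and $S$; the hypothesis that $S$ meets $\mathrm{int}_V(C)$ is used to ensure that a nonzero $\phi\in C^*$ vanishing at a boundary point does not restrict to zero on $S$. Your argument instead reduces to the identity of interiors and invokes the line-segment principle (that the open segment from an interior point to any point of a convex set lies in the interior), which you prove directly.

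Your approach is more elementary and entirely self-contained: it avoids the supporting hyperplane theorem and the cited result on dual cones of intersections, and as you observe it uses only convexity and closedness, not the cone structure. It also works verbatim in any topological vector space, whereas the duality argument as written sits most naturally in finite dimensions. The paper's proof, on the other hand, makes the role of the hypothesis very transparent in functional-analytic terms and connects the lemma to the standard convex-analysis toolkit, which may be conceptually useful elsewhere. Both are short; yours is arguably the cleaner of the two for this isolated lemma.
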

\begin{proof}
The dual cone $C^* \subset V^*$ is defined by
\[
C^* = \{ \phi \in V^* \mid \phi(x) \ge 0 \text{ for all } x \in C \}.
\]
By the supporting hyperplane theorem, $C^*$ has the property that a point $x \in C$ lies in $\partial_V(C)$ if and only if there exists a non-zero $\phi \in C^*$ satisfying $\phi(x)=0$.
(The hyperplane $\phi=0$ is called a \emph{supporting hyperplane} of $C$ at $x$.)
Let $f \colon V^* \to S^*$ be the natural restriction map; then the dual $(C \cap S)^*$ is equal to $f(C^*)$ (see~\cite[Corollary~16.3.2]{rockafellar}).

Let $x$ be a point of $\partial_V(C) \cap S$.  Then there is a supporting hyperplane to $C$ at $x$, that is, there exists a non-zero $\phi \in C^*$ satisfying $\phi(x)=0$.  The condition that $S$ meet the interior of $C$ implies that $\phi$ does not vanish identically on $S$, so $f(\phi)$ is non-zero.  Thus $f(\phi)$ is a non-zero element of $(C \cap S)^*$ vanishing at $x$, so $x$ lies in $\partial_S(C \cap S)$.

Conversely, suppose that $x$ lies in $\partial_S(C \cap S)$.  Then there is a supporting hyperplane to $C \cap S$ at $x$, that is, there exists a non-zero $\psi \in (C \cap S)^*$ satisfying $\psi(x)=0$.  Let $\phi \in C^*$ satisfy $f(\phi)=\psi$; then we have $\phi(x)=0$ and so $x \in \partial_V(C)$.
\end{proof}

\begin{remark}\label{walls}
We can also give an explicit description of the walls of $\Nef(X) \cap \pos_X$.
According to~\cite[Corollary~8.1.6]{huybrechts}, a class in $\pos_X$ lies in $\Nef(X)$ if and only if it has non-negative intersection number with every $(-2)$-curve on $\Xsep$, or, equivalently, with every Galois orbit of $(-2)$-curves.  The question is to determine which Galois orbits are superfluous, and which actually define walls of $\Nef(X) \cap \pos_X$.

Let $I$ be a Galois orbit of $(-2)$-curves on $\Xsep$, and suppose that the subgroup $W_I \subset W(\Pic \Xsep)$ generated by reflections in the elements of $I$ is finite, that is, $I$ is as described in Proposition~\ref{rx}.
The longest element of $W_I$ acts on $\Pic X$ by reflection in the class $C_I = \sum_{E \in I} E$, which has negative self-intersection.
The hyperplane orthogonal to $C_I$ is a wall of $\Nef(X) \cap \pos_X$: by the same argument as in~\cite{sterk}, given a class $D\in \Amp(X)$, the class 
\[
D - \frac{D \cdot C_I}{C_I \cdot C_I} C_I
\]
is orthogonal to $C_I$ but has positive intersection number with all $(-2)$-curves outside $I$.

On the other hand, let $I$ be a Galois orbit of $(-2)$-curves such that the subgroup $W_I$ is infinite.
Then, for a $(-2)$-curve $C \in I$, the hyperplane orthogonal to $C$ in $\Pic(\Xsep)_\R$ does not meet $\Nef(X) \cap \pos_X$, as the following argument shows.  
Let $x$ be a class in $\pos_X$ orthogonal to $C$; then $x$ is also orthogonal to all the other curves in $I$, and so is fixed by $W_I$.
Therefore $x$ is also orthogonal to the infinitely many images of $C$ under the action of $W_I$, 
contradicting the fact that the chamber structure induced by $W(\Pic\Xsep)$ on $\pos_\Xsep$ is locally polyhedral.
\end{remark}

Next we would like to prove an analogue of Theorem \ref{thm:old} (2).
However, while the authors do not have an explicit counterexample, there seems to be no reason for the image of $R_X$ in $\O(\Pic X)$ to be normal.
Instead, we will see that there is a natural homomorphism from a semidirect product $\Aut X \ltimes R_X$ to $\O(\Pic X)$ having finite kernel and image of finite index.

Note that the natural action of $\Aut X \subset \Aut\Xbar$ on $\O(\Pic\Xbar)$ by conjugation fixes $W(\Pic\Xbar)$ and commutes with the Galois action, so fixes $R_X$.
This gives an action of $\Aut X$ on $R_X$, and a homomorphism from the associated semidirect product $\Aut X \ltimes R_X$ to $\O(\Pic\Xbar)$.
Since $\Aut X$ and $R_X$ both fix $\Pic X$, we also obtain a homomorphism $\Aut X \ltimes R_X \to \O(\Pic X)$.
\begin{proposition}\label{main-result}
Let $X$ be a K3 surface over a field $k$ of characteristic different from $2$.  Then the natural map
\[
\Aut X \ltimes R_X \to \O(\Pic X)
\]
has finite kernel and image of finite index.
\end{proposition}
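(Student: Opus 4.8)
The plan is to reduce, via Lemma~\ref{lem:sep}, to the case of a separably closed base field --- where Theorem~\ref{thm:old} is available --- and then to combine the lattice-theoretic Proposition~\ref{lattice-prop} with the semidirect-product description of $\O^+(\Pic\Xsep)$ coming from Theorem~\ref{thm:old}(1). Write $\L = \Pic\Xsep$, identified by Lemma~\ref{lem:sep} with $\Pic\Xbar$ and its $\Gk$-action; let $H \subseteq \O(\L)$ be the image of $\Gk$ and set $M = \L^H = (\Pic\Xsep)^\Gk$, so that $M^\perp$ is negative definite by the Hodge index theorem. Since $\Pic X \subseteq M$ has finite index, $\O(\Pic X)$ and $\O(M)$ are commensurable; concretely, the subgroup of $G := Z_{\O(\L)}(H)$ preserving $\Pic X$ has finite index in $G$ and restricts to $\O(\Pic X)$ with finite kernel (using definiteness of $M^\perp$) and image of finite index, by Proposition~\ref{lattice-prop} (which gives that $G$ has finite index in $\O(\L,M)$ and that $\O(\L,M) \to \O(M)$ has finite kernel and image of finite index). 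As $\Aut X$ and $R_X$ act on $\L$ commuting with $\Gk$ and preserving $\Pic X$ --- the latter by Proposition~\ref{rx}(\ref{rx3}) --- the homomorphism $\Aut X \ltimes R_X \to \O(\Pic X)$ factors through that subgroup of $G$, and so it suffices to prove that $\Aut X \ltimes R_X \to G$ has finite kernel and image of finite index.

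The next step is a geometric description of $G$. Over $\ksep$, Theorem~\ref{thm:old}(1) identifies $\Nef(\Xbar)\cap\pos_\Xbar$ with the closure of a chamber for the reflection group $W := W(\L)$, which therefore acts simply transitively on the chambers in $\pos_\Xbar$; as $W$ is normal in the index-two subgroup $\O^+(\L) \subseteq \O(\L)$ preserving $\pos_\Xbar$, we get $\O^+(\L) = W \rtimes \Gamma$, where $\Gamma = \mathrm{Stab}_{\O^+(\L)}(\Nef(\Xbar)\cap\pos_\Xbar)$. The Galois action factors through $H$, and $H \subseteq \Gamma$, because Galois preserves $\pos_\Xbar$ and permutes the $(-2)$-curves on $\Xsep = \Xbar$ (Corollary~\ref{cor:sep-2}), hence fixes the ample chamber. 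A direct computation in $W \rtimes \Gamma$ gives $Z_{\O^+(\L)}(H) = W^\Gk \rtimes \Gamma^\Gk = R_X \rtimes Z_\Gamma(H)$, a subgroup of index at most $2$ in $G$. Since an automorphism of $X$ preserves the ample cone of $\Xbar$ and commutes with the Galois action, its image in $\O(\L)$ lies in $Z_\Gamma(H)$; combined with the inclusion of $R_X$, this presents the image of $\Aut X \ltimes R_X$ in $G$ as $R_X \rtimes \mathrm{im}\bigl(\Aut X \to Z_\Gamma(H)\bigr)$. Finiteness of the kernel is then immediate: if $(\phi,w)$ goes to the identity, then in $R_X \rtimes Z_\Gamma(H)$ one has $w = \phi_*^{-1} \in W \cap \Gamma = \{1\}$, so $w = 1$ and $\phi$ lies in the kernel of $\Aut\Xbar \to \O(\L)$, which is finite by Theorem~\ref{thm:old}(\ref{it2}).

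So everything reduces to showing that $\mathrm{im}\bigl(\Aut X \to Z_\Gamma(H)\bigr)$ has finite index in $Z_\Gamma(H) = \Gamma^\Gk$, and this I expect to be the main obstacle: it is the one point where the passage from the algebraically closed case is not formal. By Theorem~\ref{thm:old}(\ref{it2}), $\Aut\Xbar \to \Gamma$ has finite kernel $K$ and image of finite index; the finite-index part survives on $\Gk$-invariants without difficulty, so the real issue is the finite kernel: one must bound the cokernel of $\Aut X = (\Aut\Xbar)^\Gk \to \Gamma^\Gk$, which measures the failure of a Galois-fixed isometry to lift to a $k$-rational automorphism and lies in $\H^1(\Gk, K)$. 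My plan for this is: first, observe that $\Gamma^\Gk$ is finitely generated --- Proposition~\ref{lattice-prop} gives a map from $G$ to the arithmetic (hence finitely presented) group $\O(M)$ with finite kernel and image of finite index, so $G$, its finite-index subgroup $R_X \rtimes \Gamma^\Gk$, and the quotient $\Gamma^\Gk$ of the latter by $R_X$ are all finitely generated; second, use that $K$, being the group of numerically trivial automorphisms of a K3 surface, is abelian, so that after passing to the finite-index subgroup of $\Gamma^\Gk$ acting trivially on $K$ the connecting map is a homomorphism to the torsion group $\H^1(\Gk, K)$, whose image --- finitely generated and torsion --- is finite. Its kernel then has finite index in $\Gamma^\Gk$ and is contained in $\mathrm{im}(\Aut X \to \Gamma^\Gk)$, which completes the argument. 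The finite generation of the Galois-fixed subgroup (where arithmeticity is essential) and the control of the obstruction attached to $K$ are the substantive points; this is also where the standing hypothesis on the characteristic, inherited from Theorem~\ref{thm:old}, really enters.
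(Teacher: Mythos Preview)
Your argument is essentially correct and is organized differently from the paper's proof, though it draws on the same ingredients.  The paper applies Theorem~\ref{thm:old}(\ref{it2}) in the form $\Aut\Xsep \to \O(\Pic\Xsep)/W(\Pic\Xsep)$, takes $\Gk$-invariants using Lemma~\ref{lem:kercoker} (which requires only that the action factor through a finite quotient, not that $K$ be abelian), and then factors through $\O(\Pic\Xsep)^\Gk/R_X$ before invoking Proposition~\ref{lattice-prop}.  You instead exploit the splitting $\O^+(\Pic\Xsep)=W\rtimes\Gamma$ with $H\subseteq\Gamma$ to get $Z_{\O^+(\L)}(H)=R_X\rtimes\Gamma^\Gk$ directly, which is a clean structural observation and makes the reduction to $\mathrm{im}(\Aut X\to\Gamma^\Gk)$ having finite index very transparent.

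There is one genuine soft spot.  Your final step asserts that $K=\ker(\Aut\Xbar\to\O(\Pic\Xbar))$ is abelian, so that after passing to the finite-index subgroup of $A^\Gk$ centralizing $K$ the connecting map becomes a group homomorphism into $\H^1(\Gk,K)$.  In characteristic~$0$ this is fine (indeed $K$ is cyclic, via its faithful action on $\H^{2,0}$), but in characteristic $p>2$ the abelianness of $K$ is not something you can simply cite, and without it ``acting trivially on $K$'' is not a well-defined condition on elements of $A$ (different lifts differ by inner automorphisms of $K$).  The paper sidesteps this entirely: it observes that the $\Gk$-action on $\Aut\Xsep$ factors through a \emph{finite} quotient --- because $\Aut\Xsep$ is finitely generated (an extension of a finite-index subgroup of the finitely generated group $\O(\Pic\Xsep)/W(\Pic\Xsep)$ by the finite group $K$) and the action is continuous, so each generator has open stabilizer --- and then Lemma~\ref{lem:kercoker} applies with a finite $G$, making $\H^1(G,K)$ a finite pointed set regardless of whether $K$ is abelian.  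You already prove finite generation of $\Gamma^\Gk$ via arithmeticity; the same reasoning gives finite generation of $\Aut\Xsep$, and once you use this to get the finite-quotient factorization you can drop both the abelianness hypothesis and the ``finitely generated torsion image'' manoeuvre, since the relevant $\H^1$ is then outright finite.
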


\begin{remark}
In the literature, this statement appears in various different but equivalent forms.
\begin{itemize}
\item Because the action of $\Aut X$ fixes the ample cone, the image of $\Aut X$ in $\O(\Pic X)$ meets $R_X$ only in the identity element.
Therefore the finiteness of the kernel in Proposition~\ref{main-result} is equivalent to the finiteness of the kernel of $\Aut X \to \O(\Pic X)$.
\item Let $\Aut_s X \subset \Aut X$ be the subgroup of symplectic automorphisms~\cite[Definition~15.1.1]{huybrechts}.
Over an algebraically closed field $k$ of characteristic zero, the induced map from $\Aut_s X$ to $\O(\Pic X)$ is injective, so that $\Aut_s X \ltimes W(\Pic X)$ can be viewed as a subgroup of $\O(\Pic X)$.
Instead of our Proposition~\ref{main-result}, Huybrechts~\cite[Theorem~15.2.6]{huybrechts} makes the statement that $\Aut_s X \ltimes W(\Pic X)$ has finite index in $\O(\Pic X)$.
This is equivalent to our formulation, because $\Aut_s X$ is of finite index in $\Aut X$.
However, when $k$ is not algebraically closed there is no reason for $\Aut_s X \to \O(\Pic X)$ to be injective, so there is no advantage to stating the result in terms of $\Aut_s X$.
\item Lieblich and Maulik~\cite[Theorem~6.1]{lm} define $\Gamma \subset \O(\Pic X)$ to be the subgroup of elements preserving the ample cone, and then show that $\Aut X \to \Gamma$ has finite kernel and cokernel.  
Over an algebraically closed base field $k$, the subgroups $\Gamma$ and $W(\Pic X)$ generate $\O(\Pic X)$: any element of $\O(\Pic X)$ permutes the $(-2)$-classes in $\Pic X$ and therefore takes the ample cone to one of its translates under $W(\Pic X)$; so composing with an element of $W(\Pic X)$ gives an element of $\Gamma$.
Therefore that condition is also equivalent to the condition of Proposition~\ref{main-result}.
\end{itemize}
\end{remark}

Before proving Proposition~\ref{main-result}, we first state two lemmas which are well known in the case of abelian groups. 
\begin{lemma}\label{lem:kercoker}
Let $G$ be a finite group, and $f \colon A \to B$ a homomorphism of (possibly non-commutative) $G$-modules having finite kernel and image of finite index.  Then the induced homomorphism $f^G \colon A^G \to B^G$ also has finite kernel and image of finite index.
\end{lemma}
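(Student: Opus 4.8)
The plan is to disentangle the two features of $f$ --- being surjective with finite kernel, and having image of finite index --- by factoring $f$ as $A \xrightarrow{\pi} \im f \hookrightarrow B$, where $\pi$ is the surjection onto the image and the second arrow is the inclusion of the $G$-submodule $\im f \subseteq B$. Here $K := \ker \pi = \ker f$ is a finite, normal, $G$-invariant subgroup of $A$, and $[B : \im f] < \infty$. Taking $G$-invariants, $f^G$ becomes the composite $A^G \xrightarrow{\pi^G} (\im f)^G \hookrightarrow B^G$, and $\im(f^G) = \pi(A^G) \subseteq (\im f)^G = (\im f) \cap B^G \subseteq B^G$; by multiplicativity of the index, the assertion about the image reduces to the two statements $[B^G : (\im f)^G] < \infty$ and $[(\im f)^G : \pi(A^G)] < \infty$. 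The assertion about the kernel needs nothing: $\ker(f^G) = (\ker f) \cap A^G = (\ker f)^G$ is a subgroup of the finite group $\ker f$.

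For the first index I would consider the map from $B^G$ to the finite set of left cosets $B/(\im f)$ sending $b \mapsto b \cdot \im f$. For $b, b' \in B^G$ we have $b \cdot \im f = b' \cdot \im f$ if and only if $b^{-1}b' \in \im f$; since also $b^{-1} b' \in B^G$, this is equivalent to $b^{-1}b' \in (\im f) \cap B^G = (\im f)^G$. Hence the nonempty fibres of this map are exactly the left cosets of $(\im f)^G$ in $B^G$, so $[B^G : (\im f)^G] \le [B : \im f] < \infty$.

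For the second index I would use the non-abelian cohomology of the finite group $G$ with coefficients in the finite group $K$. The exact sequence $1 \to K \to A \xrightarrow{\pi} \im f \to 1$ of $G$-groups yields a connecting map of pointed sets $\delta \colon (\im f)^G \to \H^1(G,K)$ that sends $c$ to the class of the $1$-cocycle $\sigma \mapsto a^{-1}\,\sigma(a)$, for any lift $a \in A$ of $c$. Exactness at $(\im f)^G$ gives $\delta^{-1}(\text{base point}) = \pi(A^G)$, and in fact a short cocycle computation shows the sharper statement that $\delta(c_1) = \delta(c_2)$ if and only if $c_1$ and $c_2$ lie in the same coset of $\pi(A^G)$ in $(\im f)^G$: if the two cocycles are cohomologous one may replace the chosen lift of $c_2$ by its product with a suitable element of $K$ so that the two cocycles become equal, and then the quotient of the two lifts is $G$-fixed. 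Therefore $\delta$ induces an injection of $\pi(A^G)\backslash(\im f)^G$ into $\H^1(G,K)$, so $[(\im f)^G : \pi(A^G)] \le \#\H^1(G,K)$, which is finite since there are only finitely many functions $G \to K$.

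Putting the three paragraphs together proves the lemma. The step that I expect to require the most care is the description of the fibres of the connecting map $\delta$: since $K$ is allowed to be non-abelian, this is genuinely a computation in non-abelian cohomology, and all the bookkeeping with cocycles and twisted lifts should be spelled out. If one prefers not to invoke the general formalism, that same cocycle computation can be performed directly --- which is essentially all the general theory does in this case.
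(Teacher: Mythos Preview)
Your proof is correct and follows essentially the same route as the paper's: factor through $\im f$, bound $[B^G:(\im f)^G]$ by the finite index $[B:\im f]$, and bound $[(\im f)^G:\pi(A^G)]$ via the connecting map to $\H^1(G,K)$. Your treatment is in fact more careful than the paper's at the one delicate point: the paper simply invokes the exact sequence of pointed sets and the finiteness of $\H^1(G,\ker f)$, whereas you correctly note that in the non-abelian setting exactness at $(\im f)^G$ only identifies the fibre over the basepoint, and you supply the extra cocycle computation showing that all fibres of $\delta$ are right cosets of $\pi(A^G)$, which is what actually yields the index bound.
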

\begin{proof}
The kernel of $f^G$ is contained in the kernel of $f$, so is finite.  For the statement about the image, consider the short exact sequence
\[
0 \to \ker f \to A \xrightarrow{f} \im f \to 0.
\]
The associated exact sequence of cohomology gives
\[
0 \to (\ker f)^G \to A^G \xrightarrow{f^G} (\im f)^G \to \H^1(G, \ker f)
\]
and $\H^1(G,\ker f)$ is a finite set, showing that $\im(f^G)$ is of finite index in $(\im f)^G$.  On the other hand, we claim that $(\im f)^G$ is of finite index in $B^G$.  Indeed, by hypothesis $\im(f)$ is of finite index in $B$, and this property is preserved on intersecting with the subgroup $B^G$.  Thus $\im(f^G)$ is of finite index in $B^G$.
\end{proof}

The following lemma is standard and easy to prove; we state it for reference.

\begin{lemma}\label{lem:comp}
Let $f \colon A \to B$ and $g \colon B \to C$ be two homomorphisms of groups.
\begin{enumerate}
\item If $f$ and $g$ both have finite kernel and image of finite index, then so does the composition $g \circ f$.
\item If $g \circ f$ has finite kernel and image of finite index, and $g$ has finite kernel, then $f$ has finite kernel and image of finite index.
\item If $g \circ f$ has finite kernel and image of finite index, and $f$ has image of finite index, then $g$ has finite kernel and image of finite index.
\end{enumerate}
\end{lemma}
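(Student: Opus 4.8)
The plan is to reduce all three parts to elementary bookkeeping about kernels and indices of subgroups, using throughout the two identities $\ker(g\circ f)=f^{-1}(\ker g)$ and $\im(g\circ f)=g(\im f)$, together with the tower law $[C:A]=[C:B]\,[B:A]$ for nested subgroups $A\subseteq B\subseteq C$ (valid for left cosets without any normality assumption). I would record at the outset two facts I expect to use repeatedly: first, a group homomorphism carries a finite-index subgroup onto a finite-index subgroup of its image, so that $[\im g:g(\im f)]\le[B:\im f]$; and second, for a subgroup $S\subseteq B$ one has $g^{-1}(g(S))=\ker g\cdot S$, with the correspondence theorem giving $[B:\ker g\cdot S]=[\im g:g(S)]$ whenever $\ker g\subseteq\ker g\cdot S$.

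For part~(1) I would bound the kernel of the composite using the exact-sequence-type estimate coming from $1\to\ker f\to f^{-1}(\ker g)\to\ker g$, so that $\#\ker(g\circ f)\le\#\ker f\cdot\#\ker g$ is finite. For the image I would write $g(\im f)\subseteq\im g\subseteq C$ and apply the tower law: the index $[C:\im g]$ is finite by hypothesis, and $[\im g:g(\im f)]\le[B:\im f]$ is finite by the first recorded fact, so the composite has image of finite index.

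The genuinely directional statements are~(2) and~(3), and this is where the main work lies. In both, the finiteness of the composite's kernel gives one of the two conclusions immediately: $\ker f\subseteq\ker(g\circ f)$ is finite in~(2), and $\ker g\cap\im f\subseteq f(\ker(g\circ f))$ is finite in~(3). The remaining conclusions are the obstacle. For~(2) I would show $[B:\im f]$ is finite by factoring it as $[B:\ker g\cdot\im f]\,[\ker g\cdot\im f:\im f]$: the first factor equals $[\im g:g(\im f)]\le[C:g(\im f)]<\infty$ by the correspondence theorem, and the second is at most $\#\ker g<\infty$. For~(3) I would deduce $[C:\im g]\le[C:g(\im f)]<\infty$ directly from $g(\im f)\subseteq\im g$, and then pass from the already-finite intersection $\ker g\cap\im f$ to $\ker g$ itself via $[\ker g:\ker g\cap\im f]\le[B:\im f]<\infty$. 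The one point requiring care is the correspondence-theorem index identity used in~(2): since the subgroups involved need not be normal, one must rely only on multiplicativity of indices for left cosets rather than on any quotient-group structure.
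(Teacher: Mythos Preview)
Your proof is correct. The paper itself leaves this lemma as an exercise for the reader, so there is no proof to compare against; you have supplied a complete and careful argument where the paper provides none.
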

\begin{proof}
We leave this as an exercise for the reader.
\end{proof}

\begin{proof}[Proof of Proposition~\ref{main-result}]
By~\cite[Theorem~15.2.6]{huybrechts} in characteristic zero, or~\cite{lm} in characteristic $p > 2$, the natural map 
\[
\Aut \Xbar \to \O(\Pic \Xbar) / W(\Pic \Xbar)
\]
has finite kernel and image of finite index.  Lemma~\ref{lem:sep} shows that the same is true for $\Aut \Xsep \to \O(\Pic \Xsep) / W(\Pic \Xsep)$.
The action of $\Gk$ on all of these groups factors through a finite quotient, so Lemma~\ref{lem:kercoker} shows that the induced homomorphism
\begin{equation}\label{eq:hom}
(\Aut \Xsep)^\Gk \to (\O(\Pic \Xsep) / W(\Pic \Xsep))^\Gk
\end{equation}
also has finite kernel and image of finite index.  Because the automorphism group functor is representable, we have $(\Aut \Xsep)^\Gk = \Aut X$.  

There is an exact sequence
\[
1 \to R_X \to \O(\Pic\Xsep)^\Gk \to (\O(\Pic \Xsep) / W(\Pic \Xsep))^\Gk
\]
and the homomorphism~\eqref{eq:hom} factors through $\O(\Pic\Xsep)^\Gk$, and hence through the injective map 
\[
\O(\Pic\Xsep)^\Gk/R_X \to (\O(\Pic \Xsep) / W(\Pic \Xsep))^\Gk.
\]
Therefore, by Lemma~\ref{lem:comp}, the map 
\[
\Aut X \to \O(\Pic \Xsep)^\Gk / R_X
\]
has finite kernel and image of finite index.  Since the image of $\Aut X$ in $\O(\Pic X^s)^\Gk$ meets $R_X$ only in the identity element, this shows that the natural map
\begin{equation}\label{eq:hom2}
\Aut X \ltimes R_X \to \O(\Pic X^s)^\Gk
\end{equation}
also has finite kernel and image of finite index.

We apply Proposition~\ref{lattice-prop} with $\Lambda = \Pic \Xsep$ and $H$ being the image of $\Gk$ in $\O(\Pic\Xsep))$.
The centralizer $Z_{\O(\Lambda)}(H)$ is then $\O(\Pic \Xsep)^\Gk$, so part (2) of Proposition~\ref{lattice-prop} shows that $\O(\Pic \Xsep)^\Gk$ is of finite index in $\O(\Pic \Xsep,(\Pic \Xsep)^\Gk)$. Parts~(1) and (2) of Proposition~\ref{lattice-prop} combined show that the map 
\[
\O(\Pic \Xsep,(\Pic \Xsep)^\Gk) \to \O((\Pic \Xsep)^\Gk)
\]
has finite kernel and image of finite index; by Lemma~\ref{lem:comp} so does the map $\O(\Pic\Xsep)^\Gk \to \O((\Pic\Xsep)^\Gk)$.

Composing with~\eqref{eq:hom2} and applying Lemma~\ref{lem:comp} shows that
\[
\Aut X \ltimes R_X \to \O((\Pic\Xsep)^\Gk)
\]
has finite kernel and image of finite index.
Since the actions of both $\Aut X$ and $R_X$ on $(\Pic \Xsep)^\Gk$ preserve $\Pic X$, this last map factors as 
\begin{equation}\label{eq:hom4}
\Aut(X) \ltimes R_X \to  \O((\Pic \Xsep)^\Gk,\Pic X) \to  \O((\Pic \Xsep)^\Gk).
\end{equation}
As the second map in this composition is clearly injective, Lemma~\ref{lem:comp} shows that the first one has finite kernel and image of finite index.

Finally, $\Pic X$ is of finite index in $(\Pic \Xsep)^\Gk$, so its orthogonal complement in the latter is trivial.  Lemma~\ref{sublemma}(\ref{Operp}) shows that the natural map 
\[
\O((\Pic \Xsep)^\Gk, \Pic X) \to \O(\Pic X)
\] 
is injective, and its image has finite index. Composing with the first map of \eqref{eq:hom4} and applying Lemma~\ref{lem:comp} again, we deduce that $\Aut X \ltimes R_X \to \O(\Pic X)$ has finite kernel and image of finite index.
\end{proof}

\begin{remark}
The finiteness of the kernel can also be proved directly, by the same proof as over an algebraically closed field.
\end{remark}

Having proved Proposition~\ref{main-result}, we can deduce the remaining results exactly as in the classical case.  Define the cone $\Nef^e(X)$ to be the real convex hull of $\Nef(X) \cap \Pic X$.

\begin{corollary}\label{rat-poly}
The action of $\Aut X$ on $\Nef^e(X)$ admits a rational polyhedral fundamental domain.
\end{corollary}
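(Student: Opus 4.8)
The plan is to run Sterk's classical argument, in the form given by Huybrechts~\cite[Theorem~8.4.2]{huybrechts}, after isolating the two geometric inputs it rests on and observing that both are now available over an arbitrary base field. The first input is that the ``nef chamber'' $\Nef(X)\cap\pos_X$ is a fundamental domain for a reflection group acting on the hyperbolic cone $\pos_X$: over a general field this group is $R_X$, with distinguished generators the reflections $r_I$ in the classes $C_I$ for $I\in F$ (Proposition~\ref{rx}), and the statement is exactly Proposition~\ref{prop:fundamental-domain}, the wall structure being made explicit in Remark~\ref{walls}. The second input is that $\Aut X\ltimes R_X\to\O(\Pic X)$ has image of finite index, which is part of Proposition~\ref{main-result}. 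Granting these, the remainder of the argument is convex geometry and lattice theory over $\R$ and is insensitive to the base field.

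Concretely, I would argue as follows. Since $\Aut X$ preserves $\Amp(X)$, $\Nef(X)$ and $\Pic X$, it preserves $\Nef^e(X)$ and acts on it through its image $\bar{A}\subseteq\O(\Pic X)$. Let $\Gamma\subseteq\O(\Pic X)$ be the subgroup generated by $\bar{A}$ and $R_X$; it has finite index by Proposition~\ref{main-result}, and since $\bar{A}$ normalizes $R_X$ and meets it only in the identity we have $\Gamma=\bar{A}\ltimes R_X$, so that $R_X\triangleleft\Gamma$ and $\Gamma/R_X\isom\bar{A}$. By the Hodge index theorem $\Pic X$ has signature $(1,\rho-1)$, so one is in the setting of the cone conjecture for hyperbolic lattices, and the theorem of Sterk~\cite{sterk} --- which underlies~\cite[Theorem~8.4.2]{huybrechts}, and whose proof uses only that $\Pic X$ is a hyperbolic lattice and that the relevant group has finite index in $\O(\Pic X)$ --- furnishes a rational polyhedral fundamental domain $\Pi$ for $\Gamma$ acting on the rational closure $\pos_X^+$ of the positive cone. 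On the other hand Proposition~\ref{prop:fundamental-domain} extends from $\pos_X$ to $\pos_X^+$ in the standard way, so $\Nef^e(X)$ is a fundamental domain for $R_X$ on $\pos_X^+$ and the $R_X$-translates of $\Nef^e(X)$ tile $\pos_X^+$. Since $\Pi$ is rational polyhedral it meets only finitely many of these translates; intersecting $\Pi$ with each and carrying the pieces back into $\Nef^e(X)$ by the corresponding elements of $R_X$ produces a fundamental domain for $\bar{A}=\Gamma/R_X$ on $\Nef^e(X)$ which is a finite union of rational polyhedral cones, and a further argument exactly as in~\cite{sterk} replaces this by a single such cone. As $\Aut X$ acts on $\Nef^e(X)$ through $\bar{A}$, this is the required fundamental domain.

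The only genuinely non-formal ingredient is the appeal to the cone conjecture for the finite-index subgroup $\Gamma\subseteq\O(\Pic X)$; this is the substance of Sterk's theorem, and it transfers to our setting because its proof --- reduction theory for the arithmetic group $\O(\Pic X)$, equivalently the analysis of the rational boundary components of $\pos_X$ --- never uses that the base field is $\C$. Everything else is bookkeeping: one should check that the steps of the classical argument stated in terms of reflections in $(-2)$-classes use only the fundamental-domain property and the local polyhedrality of the chamber decomposition --- both supplied by Proposition~\ref{prop:fundamental-domain} and Remark~\ref{walls} --- since the generators $r_I$ of $R_X$ are reflections in the classes $C_I$, whose self-intersections $-2r$ need not equal $-2$; and one should observe that replacing the semidirect product $\Aut X\ltimes R_X$ by its image in $\O(\Pic X)$ changes nothing relevant, the kernel being finite by Proposition~\ref{main-result} and acting trivially on $\Nef^e(X)$ in any case.
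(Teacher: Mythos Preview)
Your strategy coincides with the paper's: both apply Sterk's argument to the arithmetic subgroup $\Gamma=\operatorname{image}(\Aut X\ltimes R_X)\subseteq\O(\Pic X)$, invoking Proposition~\ref{main-result} for finite index and Proposition~\ref{prop:fundamental-domain} for the chamber structure. The only divergence is in the last step. You take an unspecified Sterk domain $\Pi$ for $\Gamma$, intersect it with the finitely many $R_X$-chambers it meets, carry the pieces into $\Nef^e(X)$, and then appeal to ``a further argument exactly as in~\cite{sterk}'' to glue into a single cone. The paper (following Sterk more closely) instead chooses the reference vector $y$ in the definition of $\Pi$ to be \emph{ample}; then for each generating reflection $r_I\in R_X$ the inequality $(r_I x\cdot y)\ge(x\cdot y)$ reads $x\cdot C_I\ge 0$, and Remark~\ref{walls} gives $\Pi\subseteq\Nef(X)$ directly. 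This removes both of your hand-wavy points: you never explain why $\Pi$ meets only finitely many $R_X$-chambers (not automatic, since the walls accumulate toward $\partial\pos_X$ and $\Pi$ can have extreme rays there), and there is in fact no separate ``gluing'' argument in Sterk --- the ample choice of $y$ \emph{is} the argument. So your plan is correct in outline, but the clean finish is the one the paper gives.
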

\begin{proof}
This is as in the case of $k=\C$; we briefly recall the argument of Sterk~\cite{sterk}, making the necessary adjustments.

Let $\Lambda$ be a lattice of signature $(1,\rho-1)$ and let $\Gamma \subset \O(\Lambda_\R)$ be an arithmetic subgroup (for example, a subgroup of finite index in $\O(\Lambda)$).
Let $C \subset \Lambda_\R$ be one of the two components of $\{ x \in \Lambda_\R \mid (x \cdot x) < 0 \}$; this is a self-adjoint homogeneous cone~\cite[Remark~1.11]{ash}.  Let $C_+$ be the convex hull of $\overline{C} \cap \Lambda_\Q$.
Pick any $y \in C \cap \Lambda$; the argument of~\cite[p.~511]{sterk} shows that the set
\[
\Pi = \{ x \in C_+ \mid ( \gamma x \cdot y ) \ge ( x \cdot y ) \text{ for all } \gamma \in \Gamma \}
\]
is rational polyhedral (for this, the reference to Proposition~11 of the first edition of~\cite{ash} has become Proposition~5.22 in the second edition) and is a fundamental domain for the action of $\Gamma$ on $C_+$.
(Sterk does not explicitly prove that two translates of $\Pi$ intersect only in their boundaries, but this is easy to show from the description above.)
In our case, applying this with $\Lambda = \Pic X$ and $\Gamma$ being the image of $\Aut X \ltimes R_X \to \O(\Pic X)$ gives a rational polyhedral fundamental domain $\Pi$ for the action of $\Aut X \ltimes R_X$ on $(\pos_X)_+$.

If we choose $y$ to be an ample class in $\Pic X$, then the resulting $\Pi$ is contained in $\Nef(X)$, as we now show.
Let $I$ be a Galois orbit of $(-2)$-curves on $\Xsep$ such that the corresponding group $W_I \subset W(\Pic\Xsep)$ is finite.
Proposition~\ref{rx} states that the longest element $w$ of $W_I$ acts on $\Pic X$ as reflection in the class $C_I = \sum_{E \in I} E$,
and that these elements generate $R_X$.
Taking $\gamma=w$ in the definition of $\Pi$ shows that $\Pi$ is contained in the half-space $\{x \mid x.C_I \ge 0 \}$.
As this holds for all such $I$, Remark~\ref{walls} shows that $\Pi$ is contained in $\Nef(X)$.

We conclude as in~\cite{sterk}.
If $x$ is a class in $\Nef^e(X)$ then, since $\Pi$ is a fundamental domain for the action of $\Aut X \ltimes R_X$ on $(\pos_X)_+$,
we can find $\phi \in \Aut X$ and $r \in R_X$ such that $r \phi(x)$ lies in $\Pi$.  But now $\phi(x)$ and $r \phi(x)$ both lie in $\Nef^e(X)$, so they are equal and lie in $\Pi$.
This shows that $\Pi$ is a fundamental domain for the action of $\Aut X$ on $\Nef^e(X)$.
\end{proof}

\begin{corollary}\label{finite-class-orbits}
\begin{enumerate}
\item\label{g0} There are only finitely many $\Aut X$-orbits of $k$-rational $(-2)$-curves on $X$.
\item\label{g1} There are only finitely many $\Aut X$-orbits of primitive Picard classes of irreducible curves on $X$ of arithmetic genus $1$.
\item\label{g2} For $g \ge 2$, there are only finitely many $\Aut X$-orbits of Picard classes of irreducible curves on $X$ of arithmetic genus $g$.
\end{enumerate}
\end{corollary}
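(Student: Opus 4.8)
The plan is to deduce all three parts from Corollary~\ref{rat-poly}, following the classical arguments of \cite[Theorem~8.4.2 and Corollary~8.4.6]{huybrechts}. Throughout I fix a rational polyhedral fundamental domain $\Pi$ for the action of $\Aut X$ on $\Nef^e(X)$ and write its extremal rays as $\R_{\ge 0}v_1,\dots,\R_{\ge 0}v_m$ with each $v_i\in\Pic X$ primitive. Since $\Pi\subseteq\Nef^e(X)\subseteq\clos{\pos_X}$, each $v_i$ lies in $\clos{\pos_X}$; in the Lorentzian signature $(1,\rho-1)$ this means $v_i^2\ge 0$ and, for $i\ne j$, $v_i\cdot v_j>0$, because two non-zero vectors in the closed positive cone are orthogonal only when both are isotropic and proportional. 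I will also use that $\Nef^e(X)$ and $\Nef(X)$ have the same rational points (a rational nef class is a convex combination of $0$ and one of its positive integer multiples, both lying in $\Nef(X)\cap\Pic X$), and that an irreducible curve $C$ on $X$ with $C^2=2g-2\ge 0$ has nef class --- the latter by the same argument as over an algebraically closed field, using $\Nef(X)=\Nef(\Xbar)\cap(\Pic X)_\R$ --- so that $[C]\in\Nef(X)\cap\Pic X\subseteq\Nef^e(X)$.

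For parts~(\ref{g1}) and~(\ref{g2}), let $C$ be an irreducible curve of arithmetic genus $g\ge 1$. After replacing $C$ by its image under a suitable element of $\Aut X$, I may assume $[C]\in\Pi$ and write $[C]=\sum_i c_iv_i$ with $c_i\ge 0$. If $g\ge 2$ then $[C]^2>0$, so $[C]$ lies in the open positive cone; since each $v_i$ is a non-zero class of $\clos{\pos_X}$, the integer $[C]\cdot v_i$ is then at least $1$, and
\[
2g-2=[C]^2=\sum_i c_i\,([C]\cdot v_i)\ge\sum_i c_i,
\]
so $0\le c_i\le 2g-2$ for all $i$ and $[C]$ lies in a bounded subset of $(\Pic X)_\R$, which contains only finitely many elements of $\Pic X$; this proves~(\ref{g2}). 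If $g=1$ then $[C]^2=0$, so $\sum_{i,j}c_ic_j(v_i\cdot v_j)=0$ with all summands non-negative; as $v_i\cdot v_j>0$ for $i\ne j$ this forces at most one $c_i$ to be non-zero, and then $c_i^2v_i^2=0$ gives $v_i^2=0$. Hence $[C]$ lies on one of the finitely many extremal rays of $\Pi$ spanned by an isotropic $v_i$, and if $[C]$ is primitive it equals that $v_i$; this proves~(\ref{g1}).

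For part~(\ref{g0}), let $C$ be a $k$-rational $(-2)$-curve and put $\delta=[C]\in\Pic X$, a primitive effective class with $\delta^2=-2$. Fixing an ample $D_0\in\Pic X$, the computation in Remark~\ref{walls} shows that the integral class $\beta=2D_0+(D_0\cdot\delta)\,\delta$ is nef, satisfies $\beta\cdot\delta=0$, and has positive intersection with every $(-2)$-class on $\Xsep$ not proportional to $\delta$. Thus $\beta\in\Nef^e(X)$, and since near $\beta$ the only wall of $\Nef(X)$ --- and hence of $\Nef^e(X)$ --- is $\delta^\perp$, the set $F_\delta:=\delta^\perp\cap\Nef^e(X)$ is a facet of $\Nef^e(X)$ whose relative interior contains $\beta$. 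Choosing $\phi\in\Aut X$ with $\phi(\beta)\in\Pi$, the facet $\phi(F_\delta)=\phi(\delta)^\perp\cap\Nef^e(X)$ has relative interior meeting $\Pi$; its supporting hyperplane $\phi(\delta)^\perp$ also supports $\Pi$, so intersection with $\Pi$ sends any such facet of $\Nef^e(X)$ to a face of $\Pi$. This assignment is injective: if distinct facets $F\ne F'$ gave the same face of $\Pi$, that face would be contained in $F\cap F'$, yet it meets the relative interior of $F$, which is disjoint from $F'$. Since $\Pi$ has only finitely many faces and distinct effective primitive $(-2)$-classes yield distinct facets $F_\delta$, this proves~(\ref{g0}).

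I expect parts~(\ref{g1}) and~(\ref{g2}) to be routine once Corollary~\ref{rat-poly} is in hand; the delicate point is the polyhedral bookkeeping in part~(\ref{g0}) --- verifying that $\delta^\perp\cap\Nef^e(X)$ is genuinely a facet whose relative interior meets the fundamental domain, and that distinct such facets map to distinct faces of $\Pi$. This rests on the agreement of the local structure of $\Nef(X)$ and $\Nef^e(X)$ near the rational point $\beta$, together with the explicit description of the walls of $\Nef(X)$ in Remark~\ref{walls}.
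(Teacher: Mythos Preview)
Your proof is correct and follows the same overall strategy as the paper: everything is deduced from the rational polyhedral fundamental domain $\Pi$ of Corollary~\ref{rat-poly}. The differences are purely in execution. For parts~(\ref{g1}) and~(\ref{g2}) the paper invokes Gordan's Lemma to write elements of $\Pi\cap\Pic X$ as non-negative \emph{integer} combinations of monoid generators $D_i$, whereas you use real non-negative combinations of primitive extremal-ray generators $v_i$; the same positivity fact ($v_i\cdot v_j>0$ for $i\ne j$ in the closed positive cone) drives both bounds, and your version avoids the appeal to Gordan. For part~(\ref{g0}) the paper gives a one-line argument (each $k$-rational $(-2)$-curve defines a wall of $\Nef(X)$, and $\Pi$ meets only finitely many walls), while you spell out the polyhedral bookkeeping more carefully via facets of $\Nef^e(X)$ and faces of $\Pi$; this extra care is justified, since the terse version leaves implicit why each $\Aut X$-orbit contributes a wall actually meeting $\Pi$.

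Two small points of exposition. First, in your construction of $\beta$, ``$(-2)$-class on $\Xsep$'' should read ``$(-2)$-curve on $\Xsep$''; positivity against arbitrary $(-2)$-classes is false, but positivity against the other $(-2)$-curves (which is what Remark~\ref{walls} provides) is exactly what makes $\beta$ nef. Second, your final sentence for part~(\ref{g0}) slightly elides the key step: what you need is not merely that distinct $\delta$'s give distinct facets $F_\delta$, but that distinct $\Aut X$-\emph{orbits} give distinct facets $\phi(F_\delta)$ whose relative interior meets $\Pi$. This follows because $\phi(F_\delta)=\phi'(F_{\delta'})$ forces $\phi(\delta)=\pm\phi'(\delta')$, and effectiveness (preserved by automorphisms) picks the sign, so $\delta$ and $\delta'$ lie in the same orbit. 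With that clarified, the injection into the finite set of faces of $\Pi$ finishes the argument.
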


\begin{proof}
Let $\Pi$ be a rational polyhedral fundamental domain for the action of $\Aut X$ on $\Nef^e(X)$, as in Corollary~\ref{rat-poly}.
Every $k$-rational $(-2)$-curve on $X$ defines a wall of $\Nef(X)$, by Remark~\ref{walls}.
Since $\Pi$ meets only finitely many walls of $\Nef(X)$, this proves~(\ref{g0}).

Gordan's Lemma states that the integral points of the dual cone of a rational polyhedral convex cone form a finitely generated monoid.
Applying this to the dual cone of $\Pi$, let $D_1, \dotsc, D_r$ be a minimal set of generators for $\Pi \cap \Pic X$.
Since these all lie in $\Nef(X)$, we have $D_i \cdot D_i \ge 0$ for all $i$, and $D_i \cdot D_j  > 0$ for $i \neq j$.
As observed in~\cite{sterk}, this implies that, for any $n > 0$, there are only finitely many classes in $\Pi \cap \Pic X$ of self-intersection $n$;
and there are only finitely many primitive classes in $\Pi \cap \Pic X$ of self-intersection zero.
The class of an irreducible curve of arithmetic genus $g \ge 1$ has self-intersection $2g-2$ and therefore lies in $\Nef(X)$, so this proves~(\ref{g1}) and~(\ref{g2}).
\end{proof}

\begin{remark}
It is not true that every irreducible curve on $X$ of arithmetic genus $0$ is a $k$-rational $(-2)$-curve.
However, there are not many possibilities, as we now show.
Let $C$ be such a curve, and let $C_1, \dotsc, C_r$ be the geometric components of $C$; the Galois group $\Gk$ acts transitively on them.
In order to achieve $C^2=-2$, we must have $C_i^2<0$ for all $i$, so each $C_i$ is a $(-2)$-curve.
Consider the intersection matrix $(C_i \cdot C_j)$: the sum of the entries in the matrix is $C^2=-2$, and the Galois action shows that every row sum is the same.  Therefore the number $r$ of rows divides $2$, and there are only two options: $r=1$, so that $C$ is a rational $(-2)$-curve; or $r=2$, and $C=C_1 \cup C_2$ is the union of two conjugate $(-2)$-curves meeting transversely in a single point.
By Corollary~\ref{rat-poly}, both types of curves define walls of $\Nef(X)$, so in fact both types fall into finitely many orbits under the action of $\Aut(X)$.

Both types occur on the surfaces considered in Section~\ref{finite-diagonal}
below.  Indeed, the line $x = y, z = w$ on the surface $x^4 - y^4 = c(z^4 - w^4)$
is a rational $(-2)$-curve, while the line $x = y, z = iw$ meets its conjugate
transversely in a single point.
\end{remark}

\begin{remark}
In the case $g=1$, the condition that the class be primitive cannot be omitted, for the following reason.
Take for example $k=\Q$, and suppose that~$X$ admits an elliptic fibration $\pi \colon X \to \P^1$.  A general fibre of such a fibration is a smooth, geometrically irreducible curve $E$ of genus $1$, whose class in $\Pic X$ has self-intersection $0$ and is primitive.  (In fact, the class is even primitive in $\Pic\Xbar$.)
If $s \in \P^1$ is a point of degree $m>1$, then in general the fibre $\pi^{-1}(s)$ will be an irreducible curve on $X$ of arithmetic genus $1$, linearly equivalent to $mE$.
As $m$ varies, this construction gives infinitely many such classes that are clearly not $\Aut X$-equivalent.
\end{remark}

\section{Examples}\label{sec:eg}
In this section we give three examples 
that illustrate some of the theory developed up to this point.
First, we will give a K3 surface
$X$ with finite automorphism group, even though all K3 surfaces $V$ over
$\bar \Q$ with $\Pic V \cong \Pic X$ have infinite automorphism group.
Second, we will construct a surface $Y$ with finite automorphism group, even though, 
for all extensions $k/\Q$, all K3 surfaces $V$ over $\bar \Q$ with
$\Pic V \cong \Pic Y_k$ have infinite automorphism group.  For a third example,
we will prove that the quartic surface $Z$ in $\P^3$
defined by $x^4 - y^4 = c(z^4 - w^4)$ over a field $k$ of characteristic $0$
has finite automorphism group when $c \in k$
is such that the Galois group of the field of definition of the Picard group
has degree $16$, the largest possible.

\subsection{First example}\label{finite-from-overlattice}

We construct a surface $X$ such that $\Aut \bar{X}$ is finite, and thus so is $\Aut X$,
while any K3 surface over $\bar{\Q}$ having the same Picard lattice as $X$ has infinite automorphism group.
In contrast, over an algebraically closed field of characteristic not equal to
$2$, finiteness of the automorphism group depends only on the isomorphism type of the Picard lattice.  This is an immediate consequence of statement $2$ of Theorem~\ref{thm:old}.

Let $M$ and $N$ be the block diagonal matrices
\[
M = \begin{pmatrix} 0&1&0\\1&0&0\\0&0&-8 \end{pmatrix}, \qquad
N = \left( \begin{array}{cc|c} 0&1\\1&0\\ \hline &&-2I_4 \end{array}\right),
\]
where $I_4$ is the $4\times 4$ identity matrix.
Let $L_N$ be a lattice with basis $( e_1, \dotsc, e_6 )$ and Gram matrix $N$ with respect to that basis.
Let $L_M \subset L_N$ be the sublattice generated by $e_1$, $e_2$ and $e_3+e_4+e_5+e_6$.  The Gram matrix for $L_M$ with respect to this basis is $M$.
The surface $X$ that we will construct will have compatible isomorphisms $\Pic\bar{X} \cong L_N$ and $\Pic X \cong L_M$.

\begin{proposition}\label{prop:aut-x-inf}
  Let $V$ be a K3 surface over $\bar \Q$ with
  $\Pic V \cong L_M$.
  Then $\Aut V$ is infinite.
\end{proposition}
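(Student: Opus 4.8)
The plan is to produce on $V$ a Jacobian elliptic fibration whose Mordell--Weil group is infinite; translation by the sections then embeds this group into $\Aut V$, forcing $\Aut V$ to be infinite. The starting point is the observation that the Gram matrix $M$ displayed above is block diagonal with upper-left block the hyperbolic plane $U$ and lower-right block of square $-8$, so that $\Pic V \cong L_M$ is an orthogonal direct sum $U \oplus \langle -8\rangle$. Since $\Pic V$ contains $U$, the standard theory of elliptic fibrations on K3 surfaces (see~\cite[Chapter~11]{huybrechts}) produces an elliptic fibration $\pi\colon V\to\P^1$ with a section: a primitive isotropic class spanning part of the $U$-summand becomes, after applying a suitable element of $W(\Pic V)$, nef and base-point free of self-intersection $0$, hence the class $F$ of an elliptic fibre, and the complementary generator of (the $W$-translate of) $U$ gives an effective class meeting $F$ with multiplicity $1$, out of which one extracts a section $O$.

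Granting this, the remaining steps are short. First, $\langle O,F\rangle$ is unimodular, hence a primitive sublattice of $\Pic V$ isometric to $U$, and $\Pic V = \langle O,F\rangle \oplus \langle O,F\rangle^{\perp}$ with $\langle O,F\rangle^{\perp}$ of rank $1$, negative definite and of discriminant $8$, hence isometric to $\langle -8\rangle$. Second, $\pi$ has no reducible fibres: every non-identity component of a reducible fibre is a smooth rational curve, so a $(-2)$-class, and it is orthogonal to both $F$ and $O$, hence lies in $\langle O,F\rangle^{\perp}\cong\langle -8\rangle$; but $-8c^2 = -2$ has no solution with $c\in\Z$. Third, the Shioda--Tate formula $\rho(V) = 2 + \rk\mathrm{MW}(\pi) + \sum_v (m_v-1)$, together with $\rho(V) = \rk\Pic V = 3$ and the absence of reducible fibres, forces $\rk\mathrm{MW}(\pi) = 1$. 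Thus $\mathrm{MW}(\pi)$ is infinite, and since it injects into $\Aut V$ via fibrewise translation, $\Aut V$ is infinite.

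The step I expect to require the most care is the first one: turning the lattice-theoretic inclusion $U\subseteq\Pic V$ into a genuine Jacobian elliptic fibration. This is where the classical input is needed --- Riemann--Roch to make an isotropic class effective, the Weyl group action to move it into the nef cone, base-point freeness of a primitive nef isotropic class on a K3, and the production of an honest section from an effective divisor of relative degree $1$. The lattice computations in the later steps are immediate, and Shioda--Tate together with the Mordell--Weil translation action are standard. One could instead route the argument through Theorem~\ref{thm:old}(\ref{it2}), reducing to the purely lattice-theoretic assertion that $\O(L_M)/W(L_M)$ is infinite --- for instance by exhibiting an infinite-order Eichler transvection fixing a primitive isotropic vector of $L_M$ --- but verifying that no power of such a transvection lies in $W(L_M)$ reimports essentially the same geometric fact (that the transvection preserves a Weyl chamber), so the elliptic fibration argument seems the cleaner and more self-contained route.
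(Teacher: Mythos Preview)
Your proof is correct and follows essentially the same route as the paper's own proof: both exploit the orthogonal decomposition $L_M \cong U \oplus \langle -8\rangle$ to obtain a Jacobian elliptic fibration via~\cite[Remark~11.1.4]{huybrechts}, observe that $\langle O,F\rangle^\perp \cong \langle -8\rangle$ contains no $(-2)$-classes so there are no reducible fibres, and then apply Shioda--Tate to conclude that the Mordell--Weil group has rank~$1$ and hence $\Aut V$ is infinite. Your write-up is somewhat more detailed in justifying the passage from $U\hookrightarrow\Pic V$ to an actual fibration with section, but the argument is the same.
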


\begin{proof}
  There is an obvious embedding of the hyperbolic lattice $U$ with Gram matrix
  $\left( \begin{smallmatrix}0&1\\1&0\end{smallmatrix} \right)$
  into $L_M \cong \Pic V$.
  By~\cite[Remark~11.1.4]{huybrechts}, this implies that there is an elliptic fibration $\pi \colon V \to \P^1$ with a section.
  Let $E$ be the class of a fibre of $\pi$ and $O$ the class of a section; then $E$ and $O$ generate a sublattice of $\Pic V$ isomorphic to $U$ (though not necessarily the obvious one).
  If $\pi$ were to have a reducible fibre, then any component of that fibre other than the one meeting $O$ would lie in ${\langle E,O \rangle}^\perp$ and have self-intersection $-2$.
  However, ${\langle E,O \rangle}$ is a lattice of rank $2$ and determinant $-1$,
  so the determinant of ${\langle E,O \rangle}^\perp$ is the negative of that
  of $\Pic V$ and
  its rank is $2$ less.  Thus it is generated by a single vector of norm $-8$
  and so there are no reducible fibres.
  It follows by the Shioda--Tate formula
  \cite[Corollary 11.3.4]{huybrechts}
  that the Mordell--Weil group of the fibration has rank $1$.
  Translation by a non-torsion section gives an automorphism of $V$ of infinite order.
\end{proof}

\begin{remark}
In fact, Shimada~\cite[Remark~9.3]{shimada} has proved that this translation, and negation in the Mordell--Weil group, generate the whole of $\Aut V$.
\end{remark}

In contrast with Proposition~\ref{prop:aut-x-inf}, we will see that there exist K3 surfaces over $\Q$ with Picard lattice isomorphic to $L_M$ for which the automorphism group is finite.  We use the standard Kodaira symbols for reducible
fibres of elliptic fibrations, as in \cite[IV.9, Table 4.1]{silverman}.

\begin{proposition}\label{prop:autxfinite}
  Let $V$ be a K3 surface over 
  an algebraically closed field 
  with an elliptic fibration $\pi \colon V \to \P^1$ that has a section.  Suppose that $\pi$ has four fibres each of type either $\textit{I}_2$ or $\textit{III}$.  Then the following statements are equivalent.
\begin{enumerate}
\item\label{pic6} The rank of $\Pic V$ is at most $6$.
\item\label{picLN} The Picard lattice $\Pic V$ is isomorphic to $L_N$.
\item\label{mw} The Mordell--Weil group associated to $\pi$ is trivial, and there are only four reducible fibres.
\end{enumerate}
If these equivalent conditions hold, then $\Aut V$ is finite.
\end{proposition}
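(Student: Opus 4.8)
The plan is to extract the three equivalences from the Shioda--Tate formula, and then, granting them, to show that $V$ carries only finitely many $(-2)$-curves and deduce finiteness of $\Aut V$ from Theorem~\ref{thm:old}(\ref{it2}).

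For the equivalences, I would begin with the Shioda--Tate formula \cite[Corollary~11.3.4]{huybrechts}. A fibre of type $\textit{I}_2$ or $\textit{III}$ has exactly two components, so each of the four given fibres contributes $1$ to $\sum_v (m_v - 1)$; hence $\rk \Pic V = 2 + \sum_v (m_v - 1) + \rk \mathrm{MW}(\pi) \ge 6$, with equality exactly when $\pi$ has no other reducible fibre and $\mathrm{MW}(\pi)$ has rank $0$. This gives~(\ref{pic6}) $\Leftrightarrow$ [$\pi$ has exactly four reducible fibres and $\mathrm{MW}(\pi)$ is finite]. To upgrade ``finite'' to ``trivial'' I would use the Shioda height pairing: a nonzero torsion section $P$ would satisfy $4 + 2(P\cdot O) = \sum_v \mathrm{contr}_v(P)$, but the left-hand side is $\ge 4$ while the right-hand side is a sum over the four reducible fibres, each term being $0$ or $\tfrac12$ (as these are of type $\textit{I}_2$ or $\textit{III}$), hence $\le 2$. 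This establishes~(\ref{pic6}) $\Leftrightarrow$~(\ref{mw}). For~(\ref{mw}) $\Rightarrow$~(\ref{picLN}), the Shioda--Tate exact sequence $0 \to T_\pi \to \Pic V \to \mathrm{MW}(\pi) \to 0$ identifies $\Pic V$ with the trivial lattice $T_\pi$, which is freely generated by the zero section $O$, a general fibre $F$, and the four fibre components $\Theta_1,\dots,\Theta_4$ not meeting $O$; from $F^2 = 0$, $O^2 = -2$, $F\cdot O = 1$, $\Theta_i^2 = -2$, and the orthogonality of the $\Theta_i$ with one another and with $F$ and $O$, one reads off $T_\pi \cong U \oplus A_1^{\oplus 4} \cong L_N$. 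Finally~(\ref{picLN}) $\Rightarrow$~(\ref{pic6}) is immediate, since $\rk L_N = 6$.

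Assuming now the equivalent conditions, so that $F, O, \Theta_1,\dots,\Theta_4$ is a $\Z$-basis of $\Pic V$, I would write $\Theta_i' = F - \Theta_i$ for the remaining component of the $i$-th reducible fibre, itself a $(-2)$-curve, and claim that the only $(-2)$-curves on $V$ are $O$, the $\Theta_i$, and the $\Theta_i'$. Any vertical $(-2)$-curve is a component of a reducible fibre, hence one of the eight $\Theta_i, \Theta_i'$. For a $(-2)$-curve $C$ with $d := C\cdot F \ge 1$, if $d = 1$ then $C$ is a section and so $C = O$ because $\mathrm{MW}(\pi)$ is trivial; if $d \ge 2$, then $C$ is distinct from $O$, the $\Theta_i$, and the $\Theta_i'$, so writing $C = \alpha F + dO + \sum_i \gamma_i \Theta_i$ with $\alpha, \gamma_i \in \Z$ one has $C\cdot O \ge 0$, $C\cdot\Theta_i \ge 0$, $C\cdot\Theta_i' \ge 0$. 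Expanding $C^2 = -2$ gives $\alpha d = d^2 + \sum_i \gamma_i^2 - 1$; combining this with $\alpha \ge 2d$ (from $C\cdot O \ge 0$) yields $\sum_i \gamma_i^2 \ge d^2 + 1$, while $-\tfrac d2 \le \gamma_i \le 0$ (from $C\cdot\Theta_i, C\cdot\Theta_i' \ge 0$) yields $\sum_i \gamma_i^2 \le d^2$, a contradiction. Thus $V$ has only finitely many $(-2)$-curves, and they span $(\Pic V)_\R$ since they include $O$, the $\Theta_i$, and $F = \Theta_1 + \Theta_1'$. As $\Aut V$ permutes the $(-2)$-curves, its image in $\O(\Pic V)$ is finite; the kernel of $\Aut V \to \O(\Pic V)$ is finite by Theorem~\ref{thm:old}(\ref{it2}); hence $\Aut V$ is finite.

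I expect the enumeration of $(-2)$-curves in the last step to be the main obstacle: the vertical and section cases are automatic, but the exclusion of multisection $(-2)$-curves genuinely relies on the intersection-number inequalities above. The only other point requiring care is the elimination of torsion sections in the first half; everything else is a routine application of Shioda--Tate and of the results of Section~\ref{sec:ft}.
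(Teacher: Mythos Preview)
Your argument is correct, but it proceeds along a somewhat different path from the paper's.

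For the equivalences, the paper proves (\ref{pic6}) $\Rightarrow$ (\ref{picLN}) by a pure lattice argument: the six obvious classes give an embedding $L_N \hookrightarrow \Pic V$ whose index squared divides $|\disc L_N| = 16$, and then one checks by hand, using Lemma~\ref{lem:cant-divide-four}, that no nonzero class of the form $a[E]+b[O]+\sum c_i[W_i]$ with $a,b,c_i\in\{0,1\}$ can be halved in $\Pic V$. You instead go through (\ref{mw}), killing possible torsion in the Mordell--Weil group via the Shioda height formula; this sidesteps Lemma~\ref{lem:cant-divide-four} entirely and is arguably more conceptual, since it explains \emph{why} the index must be $1$ in terms of the fibre types rather than by an exhaustive divisibility check.

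For the finiteness of $\Aut V$, the paper simply observes that $U\oplus 4A_1$ appears in Nikulin's list~\cite[Theorem~3.1]{nikulin} of Picard lattices of rank $\ge 6$ with finite automorphism group. Your route---enumerating all $(-2)$-curves directly and using that $\Aut V$ permutes a finite spanning set---is self-contained and avoids the appeal to Nikulin, at the price of the multisection inequality computation. Note, though, that your enumeration is exactly what the paper carries out later as Lemma~\ref{nine-curves}; so in effect you have folded that lemma into the proof of the proposition, whereas the paper separates it out and quotes the classification for the proposition itself. Either organisation is fine; yours has the advantage of not depending on \cite{nikulin}, while the paper's keeps the proof of the proposition short.
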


Before proving the proposition, we state and prove a lemma.

\begin{lemma}\label{lem:cant-divide-four}
  Let $V$ be a K3 surface containing four pairwise disjoint smooth rational curves
  $C_1, C_2, C_3, C_4$.  Then the class $\sum_{i=1}^4 [C_i]$ cannot be divided by
  $2$ in $\Pic V$.
\end{lemma}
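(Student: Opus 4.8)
The plan is to argue by contradiction: suppose there is a class $D \in \Pic V$ with $2D = \sum_{i=1}^4 [C_i]$, and write $C = \sum_{i=1}^4 C_i$ for the reduced divisor. Since the $C_i$ are smooth rational curves, adjunction on the K3 surface $V$ gives $C_i^2 = -2$, and disjointness gives $C_i \cdot C_j = 0$ for $i \ne j$, so $C^2 = -8$; hence $4D^2 = (2D)^2 = -8$ and $D^2 = -2$.

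Next I would show that $D$ is linearly equivalent to an effective divisor. Riemann--Roch on $V$ gives $\chi(\OO_V(D)) = 2 + \tfrac12 D^2 = 1$, so by Serre duality $h^0(D) + h^0(-D) \ge 1$. If $-D$ were effective, say $-D \sim E$ with $E \ge 0$, then $C \sim 2D \sim -2E$, and intersecting with an ample class contradicts the fact that $C$ is a nonzero effective divisor. Hence $h^0(-D) = 0$ and $h^0(D) \ge 1$; fix an effective divisor $D'$ with $D' \sim D$, so that $2D' \sim C$ with $2D'$ effective.

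Then I would pin down the linear system $|C|$. For each $i$ we have $C \cdot C_i = C_i^2 = -2 < 0$, so every effective divisor linearly equivalent to $C$ must contain $C_i$ as a component; running over all $i$ and using disjointness, such a divisor dominates $\sum_{i=1}^4 C_i = C$, hence equals $C$. In particular $h^0(\OO_V(C)) = 1$ and the only effective divisor in the class of $C$ is $C$ itself. Therefore $2D' = C$ as divisors, which is the desired contradiction: on the left every irreducible component occurs with even multiplicity, while on the right each of the distinct irreducible curves $C_1, \dots, C_4$ occurs with multiplicity $1$.

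The argument is short, and I do not expect a genuine obstacle. The only points requiring care are the distinction between linear equivalence and literal equality of divisors --- handled by the rigidity of $|C|$, which comes from the negative self-intersections of the $C_i$ --- and the verification that it is $D$ rather than $-D$ that is effective, which follows from the sign argument against an ample class.
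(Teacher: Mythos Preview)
Your argument is correct and uses essentially the same idea as the paper's proof: both exploit that a curve with negative intersection against an effective class must appear as a fixed component. The only difference is where you apply this principle. The paper computes $D \cdot [C_i] = -1$ and concludes that each $C_i$ is a base component of $|D|$, so that $D - \sum_i [C_i] = -D$ is effective; since $D$ was already shown effective, this is an immediate contradiction. You instead compute $(2D)\cdot[C_i] = -2$, deduce that $|2D|$ consists of the single divisor $C$, and reach a contradiction via parity of multiplicities in $2D' = C$. Both routes are short and valid; the paper's version avoids the extra bookkeeping of distinguishing linear equivalence from equality of divisors.
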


\begin{proof}
  Suppose otherwise, and let $D = (\sum_{i=1}^4 [C_i])/2 \in \Pic V$.  Then $D^2 = -2$, so either
  $D$ or $-D$ is effective; since $2D$ is effective, we deduce that $D$ is effective.
  Now $(D,[C_i]) = -1$ for
  $1 \le i \le 4$, so the $C_i$ are base components of
  the linear system $|D|$ and $D - \sum_{i=1}^4 [C_i] = -D$ is also effective, giving a contradiction.
\end{proof}

\begin{proof}[Proof of Proposition~\ref{prop:autxfinite}]
The equivalence of~(\ref{picLN}) and~(\ref{mw}) is proved as follows.  
By~\cite[Proposition~11.3.2]{huybrechts}, there is an exact sequence
\[
0 \to A \to \Pic V \to G \to 0,
\]
where $A \subset \Pic V$ is the subgroup generated by vertical divisors and a chosen section, and $G$ is the Mordell--Weil group of the elliptic fibration.
Let $E$ be a fibre of $\pi$, let $O$ be a chosen section, and let $W_1, \dotsc, W_4$ be the components of the four given fibres that do not meet $O$.
The classes $[E], [E]+[O], [W_1], \dotsc, [W_4]$ lie in $A$ and have intersection matrix equal to $N$, so they generate a sublattice of $A$ isomorphic to $L_N$.
If $G$ is trivial and there are no other reducible fibres, then these six classes generate $\Pic V$ and so we have $\Pic V \cong L_N$.
Conversely, if $\Pic V$ is isomorphic to $L_N$ then these six classes must generate $\Pic V$, so $G$ is trivial.  Also, there can be no further reducible fibres, for the class of a curve in such a fibre would be independent of the given generators of $\Pic V$.

The implication~(\ref{picLN})$\Rightarrow$(\ref{pic6}) is trivial; we now prove~(\ref{pic6})$\Rightarrow$(\ref{picLN}).
  As above, we have an embedding $L_N \hookrightarrow \Pic V$, so $\Pic V$ must have rank exactly $6$.  
  Since $\Pic V$ has rank $6$ this embedding has finite index, and
  we must prove it to be an isomorphism.  The determinant of $N$ is~$-16$;
  the square of the index $[\Pic V:L_N]$ must divide this, so the index is
  $1, 2,$ or $4$.  If it is not $1$, there is some element of $L_N$ that
  can be divided by $2$ in $\Pic V$ but not in $L_N$.  We take this element
  to be of the form $a[E] + b[O] + \sum_{i=1}^4 c_i [W_i]$, where all the
  coefficients are $0$ or $1$ and not all are $0$.
  Then $a = b = 0$, for otherwise the intersection number with
  $[O]$ or $[E]$ would be odd; and all the $c_i$ must be equal, because the
  self-intersection of any divisor on a K3 surface is even and hence that
  of any divisor that can be divided by $2$ is a multiple of $8$.  Thus
  all $c_i$ are equal to~$1$.  However, Lemma~\ref{lem:cant-divide-four}
  shows that $\sum_{i=1}^4 [W_i]$ is not divisible by $2$.
  This proves $\Pic V \cong L_N$.
  
Finally, Nikulin~\cite[Theorem~3.1]{nikulin} has listed the finitely many possibilities for $\Pic V$ of rank $\ge 6$ that give rise to finite automorphism groups.  The lattice $L_N \cong U \oplus 4A_1$ is in the list, showing that $\Aut V$ is finite.
\end{proof}

\begin{proposition}\label{prop:auttfinite}
Let $X$ be a K3 surface over a field $k$ with an elliptic fibration $\pi \colon X \to \P^1$ that has a section.
Suppose that $\pi$ has four conjugate fibres of type $\textit{I}_2$ or $\textit{III}$
and that the rank of $\Pic \bar{X}$ is at most $6$.
Then there are compatible isomorphisms $\Pic\bar{X} \cong L_N$
and $\Pic X \cong L_M$, and $\Aut X$ is finite.
\end{proposition}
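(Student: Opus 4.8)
The plan is to reduce the statement to the geometric case established in Proposition~\ref{prop:autxfinite}. First I would pass to the algebraic closure: the fibration $\pi$ induces an elliptic fibration $\bar\pi\colon\Xbar\to\P^1$ with a section, and the four conjugate fibres of $\pi$ become four fibres of $\bar\pi$, each of type $\textit{I}_2$ or $\textit{III}$. Since $\rk\Pic\Xbar\le 6$ by hypothesis, condition~(\ref{pic6}) of Proposition~\ref{prop:autxfinite} is satisfied, so all three equivalent conditions hold; in particular $\Pic\Xbar\cong L_N$, the Mordell--Weil group of $\bar\pi$ is trivial, $\bar\pi$ has exactly four reducible fibres, and $\Aut\Xbar$ is finite.

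The finiteness of $\Aut X$ is then immediate. By Lemma~\ref{lem:sep}, $\Aut\Xsep\cong\Aut\Xbar$, and, as noted in the proof of Proposition~\ref{main-result}, representability of the automorphism functor gives $\Aut X=(\Aut\Xsep)^\Gk$; hence $\Aut X$ is a subgroup of the finite group $\Aut\Xbar$.

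It remains to identify $\Pic X$ and $\Pic\Xbar$ compatibly. Let $E$ be a fibre of $\pi$ and let $O$ be the given section, so that $[E],[O]\in\Pic X$, and let $W_1,\dotsc,W_4$ be the components of the four reducible fibres of $\bar\pi$ that do not meet $O$. As in the proof of Proposition~\ref{prop:autxfinite}, the classes $[E],[E]+[O],[W_1],\dotsc,[W_4]$ have intersection matrix $N$ and generate $\Pic\Xbar$, giving an isomorphism $\Pic\Xbar\cong L_N$ under which they correspond to $e_1,e_2,e_3,\dotsc,e_6$. Now $\Gk$ fixes $[E]$ and $[O]$, and, because the four reducible fibres are conjugate while the component of each fibre meeting $O$ is Galois-stable, $\Gk$ permutes $\{W_1,\dotsc,W_4\}$ through a transitive subgroup of the symmetric group $S_4$. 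Therefore an element $\sum_i a_ie_i\in L_N$ is $\Gk$-invariant if and only if $a_3=a_4=a_5=a_6$, so $(\Pic\Xbar)^\Gk$ is the sublattice generated by $e_1$, $e_2$ and $e_3+e_4+e_5+e_6$, which is exactly $L_M$. Finally $X$ has a $k$-rational point (take any point of $O\cong\P^1_k$), so the exact sequence $0\to\Pic X\to(\Pic\Xsep)^\Gk\to\Br k$ recalled in Section~\ref{sec:ft}, together with the isomorphism $\Pic\Xsep\cong\Pic\Xbar$ of $\Gk$-modules, gives $\Pic X\cong(\Pic\Xbar)^\Gk\cong L_M$, compatibly with $\Pic\Xbar\cong L_N$ by construction.

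I do not anticipate a genuine obstacle, since the real content is contained in Proposition~\ref{prop:autxfinite}. The step that needs the most care is the computation of $(\Pic\Xbar)^\Gk$: one must check that the curves $W_i$ form a single Galois orbit, which follows from the fibres being conjugate together with the Galois-stability of the section $O$, and that $X$ has a $k$-point, so that $\Pic X$ is the full group of Galois invariants rather than merely a finite-index subgroup of it.
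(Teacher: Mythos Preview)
Your proposal is correct and follows essentially the same approach as the paper: both reduce to Proposition~\ref{prop:autxfinite} applied to $\Xbar$, identify the explicit basis $([E],[E]+[O],[W_1],\dotsc,[W_4])$ of $\Pic\Xbar$, compute the Galois-invariants as $L_M$ using transitivity on the $W_i$, and use a $k$-point on the section $O$ to conclude $\Pic X=(\Pic\Xbar)^\Gk$. The only difference is cosmetic: you justify $\Aut X\subset\Aut\Xbar$ via Lemma~\ref{lem:sep} and representability, whereas the paper simply says ``a fortiori''.
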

\begin{proof}
Applying Proposition~\ref{prop:autxfinite} to the surface $\bar{X}$ shows that $\Pic \bar{X}$ is isomorphic to $L_N$.
More precisely, the proof shows that there is an isomorphism $\Pic\bar{X} \cong L_N$ that identifies the basis $(e_1, \dotsc, e_6)$ of $L_N$ with the basis $([E], [E]+[O], [W_1], \dotsc, [W_4])$ of $\Pic \bar{X}$, where $E$ is a fibre of $\pi$ and $O$ is a section, and $W_1, \dotsc, W_4$ are the components of the four reducible fibres that do not meet $O$.
Since by Proposition~\ref{prop:autxfinite} there are no other reducible fibres,
the Galois action permutes $W_1, \dotsc, W_4$ transitively and so the Galois-invariant subgroup is identified with $L_M$.
As $X$ contains a $k$-rational curve $O$ of genus $0$, it has rational points over $k$ and hence $\Pic X = (\Pic \bar{X})^{\Gamma_\Q} = L_M$.

Proposition~\ref{prop:autxfinite} also shows that $\Aut\bar{X}$ is finite, and \emph{a fortiori} that $\Aut X$ is finite.
\end{proof}

We now construct a K3 surface $X$ over $\Q$ satisfying the conditions of Proposition~\ref{prop:auttfinite},
as the Jacobian of a genus-$1$ fibration on a quartic $U$ in $\P^3$.
See \cite[Section 11.4]{huybrechts} for the properties of the Jacobian of
a genus-$1$ fibration on a K3 surface.  

Let $U$ be a smooth quartic surface in $\P^3$ over $\Q$ containing a line $L$.
Projection away from $L$ induces a morphism $\pi_L \colon U \to \P^1$ whose fibres are the residual intersections with $U$ of planes containing $L$.
The generic fibre is a smooth curve of genus $1$, and the induced morphism $L \to \P^1$ has degree $3$.

\begin{proposition}\label{prop:four-a1}
  Let $U$ be a smooth quartic surface in $\P^3_\Q$ containing a
  rational line $L$ and a Galois orbit $\mathcal{L}$ of four lines that meet $L$.  
  Suppose in addition that each of the four planes containing $L$ and a line in $\mathcal{L}$ meets $U$ in one further component, which is a smooth conic.
  Then $\pi_L$
  has four conjugate fibres of type $\textit{I}_2$ or $\textit{III}$.
  
  Let $X \to \P^1$ be the relative Jacobian of $\pi_L$, and suppose in addition that $\Pic(\bar{U})$ has rank at most $6$.
  Then there are compatible isomorphisms $\Pic \bar{X} \cong L_N$ and $\Pic X \cong L_M$, and $\Aut X$ is finite. 
\end{proposition}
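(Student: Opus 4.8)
The plan is to verify the hypotheses of Proposition~\ref{prop:auttfinite} for the Jacobian fibration $X \to \P^1$, which then immediately yields the conclusion. The content of the proof therefore splits into two parts: first, showing that $\pi_L$ itself has four conjugate fibres of type $\textit{I}_2$ or $\textit{III}$; second, transferring this information to the Jacobian $X$ and checking that the Picard rank of $\bar X$ is at most $6$.

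For the first part I would analyse the four special fibres of $\pi_L$ directly. A fibre of $\pi_L$ over a point of $\P^1$ is the residual quartic curve cut out on $U$ by the corresponding plane $\Pi$ through $L$; set-theoretically it is $(\Pi \cap U) \setminus L$, and as a divisor it is the plane section minus $L$, a curve of arithmetic genus $1$. For a plane $\Pi$ containing $L$ and one of the four lines $L' \in \mathcal{L}$, the hypothesis says $\Pi \cap U = L \cup L' \cup Q$ with $Q$ a smooth conic, so the fibre is $L' \cup Q$. Since $L'$ is a line and $Q$ is a smooth conic, each is a smooth rational curve, and to have arithmetic genus $1$ the divisor $L' + Q$ must have the two components meeting in a length-$2$ scheme: either two transverse points (type $\textit{I}_2$) or one tangential point (type $\textit{III}$). (One should also note that $Q$ does not meet $L$, since otherwise $\Pi \cap U$ would have a triple point and $U$ would be singular there, contradicting smoothness; this guarantees the fibre really is $L' \cup Q$ and not something degenerate.) As the Galois group permutes the four lines of $\mathcal{L}$ transitively and acts compatibly on the base $\P^1$, these four fibres form a single Galois orbit. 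This establishes the first assertion.

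For the second part I would invoke the standard properties of the relative Jacobian recalled in \cite[Section~11.4]{huybrechts}: $X \to \P^1$ is an elliptic K3 surface with a section, it has the same discriminant locus as $\pi_L$, and the reducible fibres of $X$ have the same Kodaira types as those of $\pi_L$. In particular $X \to \P^1$ has four conjugate fibres of type $\textit{I}_2$ or $\textit{III}$. It remains to control $\rk \Pic \bar X$. Here one uses that $\bar U$ and $\bar X$ have isogenous transcendental lattices — equivalently, the genus-$1$ fibration on $\bar U$ and its Jacobian have the same Picard number, since the Jacobian construction does not change the transcendental part of the Hodge structure (the generic fibres of $\pi_L$ and of $X$ have isomorphic $\H^1$, and the Shioda--Tate analysis of both fibrations involves the same base and the same fibre types). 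Hence $\rk \Pic \bar X = \rk \Pic \bar U \le 6$. Now all hypotheses of Proposition~\ref{prop:auttfinite} are in place, so there are compatible isomorphisms $\Pic \bar X \cong L_N$ and $\Pic X \cong L_M$, and $\Aut X$ is finite.

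The main obstacle I anticipate is the clean statement and justification of the claim $\rk \Pic \bar X = \rk \Pic \bar U$. One must be slightly careful: the Jacobian of a genus-$1$ fibration is defined via its relative Picard scheme, and the precise statement that the transcendental lattices of $\bar U$ and $\bar X$ are Hodge-isometric (or at least isogenous, which suffices for the rank count) is standard but deserves a pointer — for instance it follows from the fact that over $\C$ the surfaces $\bar U$ and $\bar X$ have the same period, as discussed around \cite[Section~11.4]{huybrechts}. The other point that wants a word of care is ruling out that the conic $Q$ could itself be singular or meet $L$, but as noted above this is forced by smoothness of $U$. Everything else is a routine bookkeeping of fibre types and Galois orbits.
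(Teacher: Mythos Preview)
Your proposal is correct and follows essentially the same route as the paper: identify the four special fibres of $\pi_L$ as $L' \cup Q$ with $L' \cdot Q = 2$ (hence type $\textit{I}_2$ or $\textit{III}$), then pass to the Jacobian using the standard facts from \cite[Section~11.4]{huybrechts} (same geometric fibre types, same geometric Picard number) and invoke Proposition~\ref{prop:auttfinite}. One small slip worth correcting: the conic $Q$ certainly \emph{does} meet $L$ --- a line and a conic in a plane always meet --- but this is harmless, since the fibre is by definition the residual cubic $L' + Q$ regardless of how $Q$ meets $L$, and a node of the plane section at a point of $L \cap Q$ away from $L'$ does not force $U$ to be singular.
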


\begin{proof}
  Let $H$ be one of the four conjugate planes containing $L$ and a line $L' \in \mathcal{L}$,
  and let $C$ denote the residual smooth conic.
  The union $L' \cup C$ is a fibre of $\pi_L$.
  We have $L' \cdot C = 2$, because $L', C$ are a line and a conic in the
  plane $H$.  Either $L'$ is tangent to $C$, in which case we have a fibre of type
  $\textit{III}$, or they intersect in two distinct points, and the fibre is of type
  $\textit{I}_2$.  The same description holds for the other three planes that are
  Galois conjugates of $H$.
  
  The relative Jacobian $X$ is a K3 surface~\cite[Proposition~11.4.5]{huybrechts} 
  that has the same geometric Picard number as $U$~\cite[Corollary~11.4.7 and the discussion following it]{huybrechts},
  and $X \to \P^1$ has the same geometric fibres as $\pi_L$~\cite[Chapter~11, equation~(4.1)]{huybrechts}.
  Now apply Proposition~\ref{prop:auttfinite}.
\end{proof}

A very general surface $U$ constructed according to this
proposition has Picard group generated by the classes of $L$, the lines in $\mathcal{L}$ and a fibre of~$\pi_L$.
This does not imply that such a surface exists over $\Q$,
but we will see that it is not difficult to find an example.

\begin{example}\label{ex:construct-t}
We claim that the surface $U \subset \P^3_\Q$ given by the equation
\begin{multline}\label{eq:R}
-2x^3z - 3x^2yz - 3y^3z + x^2z^2 - 3xyz^2 + 2y^2z^2 + xz^3 + yz^3 - 13x^3w \\ + 24x^2yw 
 - 13xy^2w + 8y^3w - x^2zw + 51xz^2w - 37x^2w^2 + 47xyw^2 - 16y^2w^2 \\ + 111xzw^2 
 - 38yzw^2 - 57z^2w^2 - 227xw^3 + 24yw^3 - 94zw^3 + 303w^4 = 0
\end{multline}
satisfies the conditions of Proposition~\ref{prop:four-a1}.
Indeed, $U$ contains the line $L$ defined by $w=z=0$ and, for any $\alpha$ satisfying $\alpha^4+\alpha-1 = 0$,
$U$ contains the line $L_\alpha$ through the point 
$(-\alpha^3 + \alpha^2 - \alpha + 1 : 1 : 0 : 0)$ on $L$ and the point
$(-\alpha^3 - \alpha + 1 : -\alpha^3 - \alpha^2 + 1 : \alpha^3 - \alpha^2 + \alpha + 1 : 1)$.
The plane $H_\alpha$ containing $L$ and $L_\alpha$ is given by $z=(\alpha^3-\alpha^2+\alpha+1)w$, and one
verifies that $U \cap H_\alpha$ consists of $L$, $L_\alpha$ and a smooth conic.

Let $U_3 \subset \P^3_{\F_3}$ be the surface defined by the equation~\eqref{eq:R}, which is smooth.
Let~$\bar{U}_3$ be the base change of $U_3$ to an algebraic closure of $\F_3$, and $F \colon \bar{U}_3 \to \bar{U}_3$
the geometric Frobenius morphism, defined by $(x,y,z,w) \mapsto (x^3,y^3,z^3,w^3)$.
Choose a prime $\ell \neq 3$ and let $F^*$
be the endomorphism of $\H^2(\bar{U}_3, \Q_\ell(1))$ induced by $F$.
By~\cite[Proposition~6.2]{rvl}
the Picard rank of $\bar{U}$ is bounded above by that of $\bar{U}_3$,
which in turn is at most the number of eigenvalues of $F^*$ that are roots of unity
by~\cite[Corollary~6.4]{rvl}.
As in~\cite{rvl}, we find the characteristic polynomial of $F^*$ by counting points on $U_3$.
The results are shown in Table~\ref{counts}.
\begin{table}[tb]
\caption{Point counts on the surface $U_3$}\label{counts}
\begin{tabular}{c|cccccccc}
$n$ & $1$ & $2$ & $3$ & $4$ & $5$ & $6$ & $7$ & $8$ \\ \hline
$\#U_3(\F_{3^n})$ & $16$ & $94$ & $676$ & $7066$ & $60076$ & $533818$ & $4785076$ & $43101802$
\end{tabular}
\end{table}

From the Lefschetz fixed point formula we find that the trace of the $n$th power of Frobenius acting on $\H^2_\et(\bar{U}_3,\Q_\ell)$ is equal to $\#U_3(\F_{3^n}) - 3^{2n} - 1$. The trace on the Tate twist $\H^2_\et(\bar{U}_3,\Q_\ell(1))$ is obtained by dividing by $3^n$, while on the subspace~$V \subset \H^2_\et(\bar{U}_3,\Q_\ell(1))$ generated by $H, L$, and the four lines $L_\alpha$, the trace $t_n$ is equal to $6$ if $n$ is a multiple of $4$, and equal to $2$ if $n$ is not a multiple of $4$; hence, on the $16$-dimensional quotient 
$Q=\H^2_\et(\bar{U}_3,\Q_\ell(1))/V$, the trace equals $\#U_3(\F_{3^n})/3^n - 3^{n} - 3^{-n} - t_n$.
These traces are sums of powers of eigenvalues, and we use the Newton identities to compute the elementary symmetric polynomials in these eigenvalues, which are the coefficients of the characteristic polynomial $f$ of Frobenius acting on~$Q$. This yields the first half of the coefficients of $f$, including the middle coefficient, which turns out to be nonzero. This implies that the sign in the functional equation $t^{16}f(1/t) = \pm f(t)$ is $+1$, so this functional equation determines $f$, which we calculate to be
\[
f = \tfrac{1}{3}\big(3t^{16} + t^{14} + 4t^{13} + 2t^{10} - 2t^8 + 2t^6 + 4t^3 + t^2 + 3\big).
\]
As the characteristic polynomial of Frobenius acting on $V$ is $(t-1)^2(t^4-1)$, we find that the characteristic polynomial of Frobenius acting on $\H^2_\et(\bar{U}_3,\Q_\ell(1))$ is $(t-1)^2(t^4-1)f$. 
The polynomial $3f \in \Z[t]$ is irreducible, primitive and not monic, so its roots are not roots of unity. Thus we obtain an upper bound of~$6$ for the Picard rank of $\bar{U}$.
\end{example}

\begin{remark}
We found $U$ by first fixing the lines $L$ and $L'$.  The space of rational quartic polynomials vanishing on $L$ and on
the conjugates of $L'$ is easily checked to have dimension $14$.
  (This is the expected dimension.  The space of quartic polynomials has
  dimension $35$ and vanishing on a line is equivalent to vanishing on
  $5$ points of the line and so imposes $5$ conditions.  However, if two lines
  meet in a point, that point gives the same condition for both lines.
  With four pairs of intersecting lines, we therefore
  expect the dimension to be $35 - 5 \times 5 + 4 = 14$.)
We randomly chose elements of this space until we obtained one defining a smooth
surface with good reduction at $3$ and suitable characteristic polynomial of Frobenius.
\end{remark}

\begin{lemma}\label{nine-curves} The surface $\bar X$ has exactly $9$ smooth
  rational curves.
\end{lemma}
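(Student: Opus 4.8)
The surface $\bar{X}$ is the relative Jacobian of the genus-$1$ fibration $\pi_L$ on $U$, and by Proposition~\ref{prop:autxfinite} and Proposition~\ref{prop:four-a1} we have $\Pic \bar{X} \cong L_N \cong U \oplus 4A_1$, with the four $A_1$-factors generated by the classes $[W_1],\dotsc,[W_4]$ of the fibre components not meeting the section $O$, and the hyperbolic summand generated by a fibre class $[E]$ and the section class $[O]$. The plan is to enumerate all $(-2)$-classes $\delta \in \Pic\bar{X}$ for which $\delta$ or $-\delta$ is effective and irreducible, and to show there are exactly $9$ such classes of irreducible curves, each actually represented by a smooth rational curve.

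**Identifying candidate curve classes via the nef cone.** First I would pin down a simple description of the walls of $\Nef(\bar{X}) \cap \pos_{\bar{X}}$. A $(-2)$-class $\delta$ is the class of a $(-2)$-curve if and only if it is effective and primitive in the effective monoid, equivalently if it lies on a wall of the nef cone; by Corollary~\ref{rat-poly} and Remark~\ref{walls} over the algebraic closure, the $(-2)$-curves on $\bar{X}$ are precisely the classes giving the codimension-one faces of a fundamental chamber. The obvious smooth rational curves are the section $O$ and the $8$ components of the four reducible fibres; together with the zero-section that is $1 + 8 = 9$ candidates, but the eight fibre components come in four pairs $W_i, E - O - W_i$ (the complementary component in each $I_2$ or $III$ fibre). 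So I get $9$ explicit smooth rational curves: $O$, and for $i = 1,\dots,4$ the two components of the $i$th reducible fibre. The task is to prove there are no others.

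**Proving there are exactly $9$.** For the upper bound I would argue lattice-theoretically inside $L_N$. Write a general class as $\delta = a[E] + b[O] + \sum c_i [W_i]$; the $(-2)$-condition is $2ab - 2b^2 - 2\sum c_i^2 = -2$. Since $[E]$ is nef with $[E]^2 = 0$ and $[E]\cdot[O] = 1$, any irreducible curve $C$ other than a fibre component has $C \cdot [E] \ge 1$; combined with the fact that an irreducible $(-2)$-curve $C \ne O$ satisfies $C \cdot [O] \ge 0$ and $C \cdot [W_i] \ge 0$ for all but at most the components it meets, one gets $a = C\cdot[E]$ small. The key point is that $\langle E, O\rangle^\perp$ is the negative-definite lattice $4A_1$, whose only $(-2)$-vectors are $\pm[W_i]$ and $\pm(E-O-W_i)$ in disguise — more precisely its roots are exactly $\pm[W_i]$. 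A curve $C$ with $C\cdot[E] = 0$ is a fibre component, hence on our list. A curve with $C\cdot[E]=1$ lies in the class of a section; by the Shioda–Tate analysis in Proposition~\ref{prop:autxfinite}, the Mordell–Weil group is trivial, so $O$ is the only section, forcing $C = O$. A curve with $C\cdot[E] \ge 2$ would be a multisection; I would rule these out as $(-2)$-curves by noting that such a $\delta$ cannot be primitive-effective — writing $\delta = [O] + (\text{effective vertical}) + \ldots$ shows it decomposes — or more cleanly, by observing that the $9$ listed curves already span $\Pic\bar X_\R$ and their dual graph's associated chamber has exactly $9$ walls, so no further walls (hence no further $(-2)$-curves) exist.

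**Main obstacle.** The delicate step is the last one: cleanly excluding $(-2)$-curves $C$ with $C\cdot[E] \ge 2$ without an ad hoc case analysis. I expect the cleanest route is to show directly that the $9$ classes $[O], [W_i], [E-O-W_i]$ generate the effective cone — equivalently that the rational polyhedral cone they cut out in $\pos_{\bar X}$ is $\Nef(\bar X)\cap\pos_{\bar X}$ — by checking that each of the $9$ classes has non-negative intersection with the other $8$ (this is a finite intersection-matrix computation in $L_N$) and that the resulting cone is nonempty with nonempty interior (exhibit an interior ample class, e.g. a large multiple of $[E]$ plus $[O]$). Then Remark~\ref{walls} identifies the walls, hence the $(-2)$-curves, as exactly these $9$. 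The fibre-component and section cases handled above confirm each of the $9$ classes is represented by an actual smooth rational curve, completing the proof.
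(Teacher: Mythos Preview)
Your approach is sound up to the case split on $C\cdot[E]$: the cases $C\cdot[E]=0$ (fibre components) and $C\cdot[E]=1$ (sections, hence $C=O$ by triviality of the Mordell--Weil group from Proposition~\ref{prop:autxfinite}) are fine. But the resolution you propose for $C\cdot[E]\ge 2$ does not close the gap. Checking that the nine known curves have pairwise non-negative intersection only shows that each defines a genuine facet of the polyhedral cone $P=\{x:x\cdot C_j\ge 0\text{ for all }j\}$; it does \emph{not} show $P=\Nef(\bar X)\cap\pos_{\bar X}$. A tenth irreducible $(-2)$-curve $C$, being distinct from the nine, would automatically satisfy $C\cdot C_j\ge 0$ for all $j$ and hence lie in $P$ as well, so nothing you have written rules it out. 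Invoking Remark~\ref{walls} at this point is circular: that remark describes the walls of the nef cone in terms of the $(-2)$-curves once you already know them all, not the other way round.

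The paper's proof supplies exactly the missing step, by an elementary inequality argument: write a hypothetical further irreducible $(-2)$-curve as $C=a[E]+b[O]+\sum c_i[W_i]$; the non-negativity of $C\cdot[O]$, $C\cdot[W_i]$, and $C\cdot([E]-[W_i])$ forces bounds on $a,b,c_i$ which, substituted into the formula for $C^2$, yield $C^2\ge 0$, a contradiction. This is precisely the statement that $P$ contains no vectors of square $-2$ in the relevant half-space---the content your ``finite intersection-matrix computation'' would actually need to include, but which you never carry out. (A minor slip: the complementary fibre component has class $[E]-[W_i]$, not $[E]-[O]-[W_i]$; the two components of an $I_2$ or $III$ fibre sum to the fibre class $[E]$, and the section does not enter.)
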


\begin{proof}
  Let the $W_i$ be the components of the reducible fibres that meet $O$.
  In our basis, the known
  rational curves have classes $[O], [W_i], [E]-[W_i]$ for $1 \le i \le 4$.
  Suppose that there is another rational curve, of class
  $C = a[E] + b[O] + \sum_{i=1}^4 c_i [W_i]$.
  It must have non-negative intersection with the known curves, which implies
  the inequalities
  $$a \ge b \ge 0, \quad c_i \le 0, \quad b + c_i \ge 0 \qquad (1 \le i \le 4).$$
  Let $m = \min(\{c_i\})$: then
  $ab \ge b^2 \ge 4m^2$.  We thus find
  $$C^2 = 2ab - 2\sum_{i=1}^4 c_i^2 \ge 2b^2 - 8m^2 \ge 0,$$
  which contradicts the fact that $C^2 = -2$.
\end{proof}

\begin{remark}\label{more-curves-on-r}
  The surface $\bar U$, on the other hand, has infinitely many smooth
  rational curves.
  Indeed, by \cite[Chapter~11, equation~(4.5)]{huybrechts}, the Picard lattice
  of $\bar U$ has discriminant $-144$.  However, the list of Picard lattices
  of rank $\ge 6$ giving finite automorphism group in
  \cite[Theorem~3.1]{nikulin} does not include any lattices of rank $6$ and
  discriminant $-144$.  Hence $\Aut \bar U$ is infinite.  Let
  $\mathcal C$ be the union of $\{L\}$ and the set of components of the
  reducible fibres of $\pi_L$.  By construction $\mathcal C$ 
  spans a subgroup of $\Pic \bar U$ of finite index (in fact, by computing the
  discriminant one checks that $\mathcal C$
  generates $\Pic \bar U$).  It follows that the stabilizer of $\mathcal C$ in
  $\Aut \bar U$ is finite and hence that the orbit is infinite.
\end{remark}

\begin{remark}\label{prop:min-deg}
We now explain why we do not construct the Jacobian of $U$ directly as a
smooth surface in a projective space.

  If $H$ is an ample divisor class on a K3 surface $X$ with
  $\Pic X \cong L_N$, then $H^2 \ge 16$.
  To see this, let $H = a[E] + b[O] + \sum_{i=1}^4 c_i [W_i]$ be such a class.
Since $H$ is
  ample, all of $H \cdot [O], H \cdot [W_i], H \cdot ([E] - [W_i])$ must be
  positive: that is,
  $$a - 2b + \sum_{i=1}^4 c_i \ge 1, \quad b - 2c_i \ge 1, \quad c_i \ge 1.$$
  So $b \ge 3$ and $b+2c_i \ge 5$.
  We thus find
  $$\begin{aligned}
    H^2 &= 2ab - 2b^2 + 2 \sum_{i=1}^4 bc_i - 2 \sum_{i=1}^4 c_i^2 \cr
    &\ge 2b(2b - \sum_{i=1}^4 c_i + 1) - 2b^2 + 2 \sum_{i=1}^4 bc_i - 2 \sum_{i=1}^4 c_i^2 \cr
    &= 2b^2 + 2b - 2 \sum_{i=1}^4 c_i^2 \cr
    &= 2b + \frac{\sum_{i=1}^4 (b-2c_i)(b+2c_i)}2\cr
    &\ge 6 + 4\cdot 1 \cdot 5/2 = 16.\cr
  \end{aligned}
  $$

  Conversely, such a surface has
  an ample divisor class of self-intersection $16$.  To see this, note that
  equality is attained in the above
  with $a = 3, b = 3, c_i = 1$, and this gives an ample
  divisor class by \cite[Corollary 8.1.7]{huybrechts}.
\end{remark}

Thus an ample divisor class cannot give an embedding into $\P^n$ for
$n < 9$.
  
\subsection{Second example}\label{finite-by-reflections}
Now we give an example that is perhaps more surprising: a K3 surface
$Y/\Q$ for which $\Aut Y$ is finite even though, for all field
extensions $L/\Q$, a K3 surface over $\bar \Q$ whose Picard lattice is
isomorphic to $\Pic Y_L$ would have infinite automorphism group.

\begin{defn}
  Let
  \[
  M = \begin{pmatrix}10&0\cr 0&-4\cr \end{pmatrix}, \quad
  N = \begin{pmatrix}10&0&0\cr 0&-2&0\cr 0&0&-2\cr\end{pmatrix},
  \]
    and let $L_M, L_N$ be the lattices with Gram matrices $M, N$ respectively.
\end{defn}

We will choose $Y$ to be a K3 surface whose Picard lattice over $\Q$
is isomorphic to $L_M$, while over
$\bar \Q$, and indeed over a certain quadratic extension $K_Y$, the
Picard lattice is isomorphic to $L_N$.  The Galois
group will act through the quotient $\Z/2\Z$ by exchanging the second
and third generators.

\begin{proposition}\label{prop:describe-u}
  Let $Y$ be a K3 surface in $\P^4_\Q$ given as the intersection of a quadric
  and a cubic, containing two disjoint Galois-conjugate conics
  $C_1, C_2$ defined over a quadratic field $K_Y$,
  and having Picard number $3$ over $\bar \Q$.  Then
  $\Pic Y \cong L_M$ and $\Pic Y_{\bar \Q} \cong L_N$.
\end{proposition}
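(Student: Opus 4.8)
The plan is to first compute the sublattice spanned by $H, C_1, C_2$, show it has finite index in $\Pic Y_{\bar\Q}$, and then prove by a parity-and-Galois argument that this index is $1$; the explicit identifications with $L_N$ and $L_M$ are then a short change-of-basis computation. Write $H$ for the hyperplane class of $Y\subset\P^4_\Q$. Since $Y$ is a $(2,3)$ complete intersection we have $H^2=6$; since $C_1,C_2$ are smooth conics they are smooth rational curves, so $C_i^2=-2$ and $H\cdot C_i=\deg C_i=2$; and by hypothesis $C_1\cdot C_2=0$. Thus $H,C_1,C_2$ span a rank-$3$ sublattice $\L\subseteq\Pic Y_{\bar\Q}$ with Gram matrix
\[
G=\begin{pmatrix} 6 & 2 & 2 \\ 2 & -2 & 0 \\ 2 & 0 & -2 \end{pmatrix},
\]
of determinant $40\neq 0$. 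As $Y$ has Picard number $3$ over $\bar\Q$, the sublattice $\L$ has finite index in $\Pic Y_{\bar\Q}$; and since $[\Pic Y_{\bar\Q}:\L]^2$ divides $\det G=40$ whose only square divisors are $1$ and $4$, this index is either $1$ or $2$.

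The crux of the argument is ruling out index $2$, and this is where I expect the main obstacle to lie: it must combine the evenness of the K3 intersection form with Galois-stability. Suppose $\L$ had index $2$. Then $\Pic Y_{\bar\Q}=\L+\Z D$ for some $D$ with $2D\in\L$ and $D\notin\L$, so $D$ is represented by a half-class $(aH+bC_1+cC_2)/2$ with $(a,b,c)\in\{0,1\}^3$ nonzero. Running through the seven nonzero cosets and imposing $D^2\in 2\Z$ (the intersection form on a K3 surface is even), one finds that the only admissible half-classes are $(H+C_1)/2$ and $(H+C_2)/2$, each of square $2$; every other half-class has $D^2\notin 2\Z$. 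Now $H$ is defined over $\Q$ while $\Gal(K_Y/\Q)$ interchanges $C_1$ and $C_2$, hence interchanges these two half-classes. So neither candidate overlattice is Galois-stable: if $(H+C_1)/2\in\Pic Y_{\bar\Q}$ then so would its conjugate $(H+C_2)/2$, and then their difference $(C_1-C_2)/2$, of square $-1$, which is impossible in an even lattice. Since $\Pic Y_{\bar\Q}$ is Galois-stable, the index must be $1$, giving $\Pic Y_{\bar\Q}=\L$.

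It then remains to identify the lattices. Set $v=H+C_1+C_2$; a direct computation gives $v^2=10$ and $v\cdot C_1=v\cdot C_2=0$, and the change of basis from $(H,C_1,C_2)$ to $(v,C_1,C_2)$ has determinant $1$, so $(v,C_1,C_2)$ is a $\Z$-basis of $\L$ with Gram matrix $N=\mathrm{diag}(10,-2,-2)$; this proves $\Pic Y_{\bar\Q}\cong L_N$. The Galois action factors through $\Gal(K_Y/\Q)\cong\Z/2\Z$, fixing $v$ and swapping $C_1,C_2$, so its invariants are spanned by $v$ and $C_1+C_2$, with $v^2=10$, $v\cdot(C_1+C_2)=0$ and $(C_1+C_2)^2=-4$, i.e.\ Gram matrix $M$; hence $(\Pic Y_{\bar\Q})^{\Gal(K_Y/\Q)}\cong L_M$. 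Finally, $H$ and the Galois orbit $C_1+C_2$ are both defined over $\Q$ and so lie in $\Pic Y$, and since $\langle H,C_1+C_2\rangle=\langle v,C_1+C_2\rangle$ equals the full invariant subgroup, the inclusions $\langle H,C_1+C_2\rangle\subseteq\Pic Y\subseteq(\Pic Y_{\bar\Q})^{\Gal(K_Y/\Q)}$ force $\Pic Y\cong L_M$. (In particular this last step needs no rational point: the squeeze is forced purely by the two $\Q$-rational classes.)
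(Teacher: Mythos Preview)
Your proof is correct and follows essentially the same route as the paper's: both compute the sublattice spanned by $H,C_1,C_2$ (the paper uses the basis $(H+C_1+C_2,C_1,C_2)$ from the start), observe that the index in $\Pic Y_{\bar\Q}$ is $1$ or $2$, and rule out index~$2$ by the same Galois argument---if $(H+C_1)/2$ were integral then so would its conjugate $(H+C_2)/2$, forcing index $\ge 4$ (equivalently, producing the odd-square class $(C_1-C_2)/2$). The identification of $\Pic Y$ via the $\Q$-rational divisors $H$ and $C_1+C_2$ is likewise identical.
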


\begin{proof}
  Let $H$ be a hyperplane section.  Then the divisors
  $H + C_1 + C_2, C_1, C_2$
  have the intersection matrix $N$, whose determinant is $-40$.
  Thus we have an embedding
  $L_N \hookrightarrow \Pic Y_{\bar \Q}$ whose image has index $1$ or $2$.  If it
  is $2$, then either $[H] + [C_1]$ or $[H] + [C_2]$ can be divided by $2$.
  If $[H] + [C_1] \sim 2D$, then, letting $\sigma$ be an extension of the nontrivial
  automorphism of the field of definition of $C_1$ to that of $D$, we have
  $[H] + [C_2] \sim 2D^\sigma$, and so both classes can be divided by $2$ and the
  index is a multiple of $4$, a contradiction.  Similarly, $[H] + [C_2]$ cannot
  be divided by $2$.  We conclude that the index is $1$: that is,
  $\Pic Y_{\bar \Q} \cong L_N$.  There are Galois-invariant divisors of the
  classes $[H] + [C_1] + [C_2], [C_1] + [C_2]$, and these span the invariant
  subspace, so they generate $\Pic Y$.  Their intersection matrix is $M$.
\end{proof}

\begin{remark}\label{rem:bdry-ample} The divisor class $[H]$ is very ample,
  because it is the hyperplane class on the smooth projective surface $Y$.
  The divisor class $D = [H]+[C_1]+[C_2]$ is not ample, but we will show that it is nef.
  Indeed, for any irreducible curve $C$ other than $C_1$ and $C_2$, the intersection numbers $H \cdot C$, $C_1 \cdot C$, $C_2 \cdot C$ are all non-negative, while $D \cdot [C_1]$ and $D \cdot [C_2]$ are both zero. 
  In fact, one can show that  
  $D$ is the hyperplane class for a
  model of $Y$ as a surface of degree $10$ in $\P^6$ with two ordinary
  double points ($A_1$ singularities).
\end{remark}

\begin{proposition}\label{prop:aut-y-z-infinite}
\begin{enumerate}
\item\label{itemLM}  The group $\O(L_M)$ is infinite. If  $\alpha \in \O(L_M)$ has infinite order and $A$ is a normal subgroup of
  $\O(L_M)$ containing $\alpha$ then $\O(L_M)/A$ is finite.
\item\label{item:Y}
Let $V$ be a K3 surface over an algebraically closed field having Picard lattice isomorphic to $L_M$.  Then $\Aut V$ is infinite.  
\item\label{item:Z}
Let $W$ be a K3 surface over an algebraically closed field having Picard lattice isomorphic to $L_N$.  Then $\Aut W$ is infinite.
\end{enumerate}
\end{proposition}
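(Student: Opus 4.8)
I would handle the three parts in turn, the first two being short. For part~(1), to see that $\O(L_M)$ is infinite I would exhibit explicit isometries: for any solution of the Pell equation $a^2 - 10b^2 = 1$ (there are infinitely many, generated by $(a,b) = (19,6)$) the matrix $\left(\begin{smallmatrix} a & 2b \\ 5b & a\end{smallmatrix}\right)$ preserves the Gram matrix $M$, and it has eigenvalues $a \pm b\sqrt{10}$, which are real and not roots of unity, so it has infinite order. For the second assertion of~(1): $L_M$ has signature $(1,1)$, so $\O(L_M)$, being contained in $\mathrm{GL}_2(\Z)$, is a discrete subgroup of $\O(L_M \otimes \R) \cong \O(1,1)(\R)$; its intersection with the identity component, which is isomorphic to $\R$, is a finite-index subgroup of $\O(L_M)$ and a discrete, infinite (since $\O(L_M)$ is infinite) subgroup of $\R$, hence is infinite cyclic. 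So $\O(L_M)$ is virtually infinite cyclic, and any $\langle\alpha\rangle$ with $\alpha$ of infinite order has finite index in it. Therefore any subgroup $A$ containing such an $\alpha$ has finite index, and when $A$ is normal, $\O(L_M)/A$ is a finite group.

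For part~(2), I would note that $L_M$ represents neither $0$ nor $-2$: the equation $10a^2 - 4b^2 = -2$ is impossible modulo $5$, and $10a^2 = 4b^2$ has no nonzero solution by a look at $5$-adic valuations. Hence $\Pic V$ has no $(-2)$-class, so $W(\Pic V)$ is trivial and $\O(\Pic V)/W(\Pic V) \cong \O(L_M)$ is infinite by part~(1). By Theorem~\ref{thm:old}(\ref{it2}) (in characteristic different from $2$), $\Aut V \to \O(\Pic V)/W(\Pic V)$ has finite kernel and image of finite index, so $\Aut V$ is infinite.

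Part~(3) is the main obstacle: unlike $L_M$, the lattice $L_N$ contains $(-2)$-classes --- in fact infinitely many, so $W(L_N)$ is infinite --- and one cannot argue directly as in part~(2). My plan here is geometric. First I would observe that $L_N \cong U(2) \oplus \langle -10\rangle$, where $U(2)$ denotes the rank-two lattice spanned by isotropic vectors $f, g$ with $f \cdot g = 2$: indeed $(1,1,2)$ and $(1,2,1)$ are isotropic with intersection $2$, their orthogonal complement in $L_N$ is spanned by $(3,5,5)$, which has square $-10$, and since the Gram determinant of $(1,1,2),(1,2,1),(3,5,5)$ equals $\det N = 40$ these vectors form a basis of $L_N$. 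Hence a K3 surface $W$ with $\Pic W \cong L_N$ carries a genus-$1$ fibration $\pi \colon W \to \P^1$ with a bisection but no section: the class $(1,1,2)$ is primitive and meets every class in an even number, so the multisection degree is $2$. I would then pass to the relative Jacobian $J = J(W,\pi)$ and use the standard theory of Jacobians of genus-$1$ fibrations on K3 surfaces \cite[Chapter~11]{huybrechts}: $J$ is a K3 surface with $\rk\Pic J = \rk\Pic W = 3$ admitting an elliptic fibration with a section, so $\Pic J$ contains the hyperbolic plane $U$ spanned by a fibre class $F$ and a section $O$, whence $\Pic J \cong U \oplus \langle -2d\rangle$ for some $d \geq 1$. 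The identification of $d$ --- equivalently the comparison of $\disc\Pic W$ and $\disc\Pic J$ on passing to the Jacobian of a torsor of order $2$, a change that only involves the prime $2$ --- is the step I expect to require the most care; since $\disc L_N = -40$ has $5$ as an odd factor, this forces $5 \mid d$, so $d \geq 2$ and $\langle -2d\rangle$ has no $(-2)$-class. Granting this, $\langle F, O\rangle^\perp$ has no $(-2)$-class, so $\pi_J$ has no reducible fibres, and the Shioda--Tate formula gives Mordell--Weil rank $3 - 2 = 1$; translation by a non-torsion section of $J$ acts on $W$, whose generic fibre is a torsor under that of $J$, as an automorphism of infinite order, so $\Aut W$ is infinite. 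An alternative for~(3) avoiding this computation is to note that $L_N$, of rank $3$ and discriminant $-40$, does not occur in Nikulin's classification \cite{nikulin} of Picard lattices with finite automorphism group, so that $\O(L_N)/W(L_N)$ is infinite and Theorem~\ref{thm:old}(\ref{it2}) applies as in part~(2).
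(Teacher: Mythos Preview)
Your proof is correct and follows essentially the same route as the paper: in~(1) you show $\O(L_M)$ is virtually $\Z$ via Pell solutions (the paper phrases this via units in $\Z[\sqrt{10}]$), in~(2) you observe $L_M$ has no $(-2)$-vectors mod~$5$ so $W(\Pic V)$ is trivial, and in~(3) you produce a genus-$1$ fibration from an isotropic class, pass to the Jacobian, compute its discriminant, and use Shioda--Tate to get Mordell--Weil rank~$1$, exactly as the paper does (with a different choice of isotropic vector). The only place your argument is less direct is the discriminant of $\Pic J$: the paper simply cites \cite[(11.4.5)]{huybrechts} to get $\disc \Pic J = (\det N)/2^2 = -10$, whereas you argue more informally that the change only involves the prime~$2$; your alternative via Nikulin's classification is a valid fallback not used in the paper.
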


\begin{proof} We first prove~(\ref{itemLM}).
  To find $\O(L_M)$, we consider the quadratic form
  $10x^2 - 4y^2 = -N(2y + \sqrt{10} x)$ associated to $M$.  Its automorphism
  group is generated
  by the sign changes $(x,y) \to (x,-y)$ and $(x,y) \to (-x,y)$ and by
  multiplication by a generator of the group of totally positive units
  of $\O_{\Q(\sqrt{10})}$.
  (This generator is $19 + 6 \sqrt{10}$ and it takes $(x,y)$ to
  $(19x+12y,30x+19y)$.)
  Thus $\O(L_M)$ has a subgroup of finite
  index isomorphic to $\Z$, so the quotient by any infinite normal subgroup is finite.
  
  We continue with (\ref{item:Y}).
    Working mod $5$ we see that $\Pic V$ has no vectors of
  norm $-2$, whence $V$ has no rational curves.  Thus 
  $\Aut V$ coincides up to finite index with the infinite group $\O(\Pic V) = \O(L_M)$:
  this follows from Theorem \ref{thm:old} (2), because $W(\Pic V)$ is trivial.

  We now turn to~(\ref{item:Z}).  Let $D_1, D_2, D_3$ be divisors on $W$ whose
  intersection matrix is $N$, and let $D = D_1-2D_2-D_3$.  Since $D^2 = 0$
  and $D$ is primitive,
  for some $\alpha \in \O(L_N)$ there is a genus-$1$ fibration $\pi$ with
  fibres of class $\alpha(D)$.  
  There is no section of $\pi$
  (indeed, no two curves on $W$ have odd intersection), but
  there is a $2$-section.  The Jacobian $J$ of $\pi$ is a K3 surface of
  Picard number $3$ \cite[Corollary~11.4.7]{huybrechts} on
  which the determinant of the intersection pairing is $(\det N)/2^2 = -10$
  \cite[Equation~(11.4.5)]{huybrechts}.
  As in Proposition \ref{prop:aut-x-inf}, this shows that $J$ has no reducible fibres: the non-identity component of a reducible fibre would be orthogonal to the classes of both a fibre and the zero-section, and have self-intersection $-2$, which is incompatible with the required determinant.  The Shioda--Tate formula then shows that the Mordell--Weil group of $J$ has rank $1$.
  This Mordell--Weil group acts faithfully on $W$, so it follows as in
  Proposition \ref{prop:aut-x-inf} that $\Aut W$ is infinite.
\end{proof}

Proposition~\ref{prop:aut-y-z-infinite} states that any K3 surface over an algebraically closed field having Picard lattice isomorphic to either $L_M$ or $L_N$ has infinite automorphism group.
In contrast, the following proposition shows that a K3 surface over $\Q$ having Picard lattice isomorphic to $L_M$ or $L_N$ over any algebraic extension of $\Q$ may have finite automorphism group.
Indeed, the last part of this section will be devoted to finding an example of such a surface.

\begin{proposition}\label{prop:auts-finite}
  Let $Y$ be a K3 surface as in Proposition \ref{prop:describe-u}.
  Then $\Aut Y$ is finite.
\end{proposition}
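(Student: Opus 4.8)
The plan is to deduce this from Proposition~\ref{main-result} by exploiting that $\Pic Y$ has rank $2$. By that proposition together with the remarks following it, the kernel of $\Aut Y \to \O(\Pic Y)$ is finite, so it suffices to bound the image of $\Aut Y$ in $\O(\Pic Y)$.

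First I would record the shape of the nef cone. By Proposition~\ref{prop:describe-u} we have $\Pic Y \cong L_M$, a rank-$2$ lattice of signature $(1,1)$; thus $(\Pic Y)_\R$ is a Lorentzian plane, and $\pos_Y$ is one of the two open halves of $\{x : x^2 > 0\}$, a pointed convex cone bounded by two null rays (it contains no line). Since $\Amp(Y)$ is a nonempty open subcone of $\pos_Y$, the cone $\Nef(Y) = \overline{\Amp(Y)}$ is a pointed $2$-dimensional closed convex cone, hence has exactly two extreme rays $\rho_1 \neq \rho_2$, both contained in $\Nef(Y) \subseteq \overline{\pos_Y}$.

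Next I would rule out the degenerate case $\Nef(Y) = \overline{\pos_Y}$, equivalently that both $\rho_i$ are null rays of $\pos_Y$. By Proposition~\ref{rx}, the Galois orbit $\{C_1, C_2\}$ of disjoint $(-2)$-curves contributes to $R_Y$ the element acting on $\Pic Y$ as the reflection in $[C_1]+[C_2]$, which is nontrivial; hence $R_Y \neq 1$. By Proposition~\ref{prop:fundamental-domain}, $\Nef(Y) \cap \pos_Y$ is a fundamental domain for the action of $R_Y$ on $\pos_Y$, so it is a proper subcone of $\pos_Y$, and therefore $\Nef(Y) \neq \overline{\pos_Y}$; in particular at least one of $\rho_1, \rho_2$ is not a null ray. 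Now $\Aut Y$ preserves the set of ample classes, hence $\Amp(Y)$, hence $\Nef(Y)$, hence the unordered pair $\{\rho_1, \rho_2\}$; let $A \subseteq \Aut Y$ be the subgroup of index at most $2$ fixing each of $\rho_1$ and $\rho_2$. Every element of $A$ preserves $\pos_Y$ and fixes two distinct rays of $\overline{\pos_Y}$, not both of which are null. A short computation in the Lorentzian plane $(\Pic Y)_\R$ shows that the only such isometry is the identity: an isometry fixing a non-null ray of $\overline{\pos_Y}$ is either the identity or the reflection in the line it spans, and the latter fixes no second ray of $\overline{\pos_Y}$. Hence $A$ acts trivially on $(\Pic Y)_\R$, so $A$ lies in the finite group $\ker(\Aut Y \to \O(\Pic Y))$, and $\Aut Y$ is finite.

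I do not expect a serious obstacle: the substance is the observation that rank $2$ collapses $\Nef(Y)$ to a cone with only two extreme rays, after which the argument is elementary linear algebra. The one point needing a little care is excluding $\Nef(Y) = \overline{\pos_Y}$, which is exactly where the $(-2)$-curves $C_1, C_2$ and Proposition~\ref{prop:fundamental-domain} enter.
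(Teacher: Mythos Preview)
Your argument is correct and takes a genuinely different, more elementary route than the paper.

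The paper proves the proposition by showing that $R_Y$ has finite index in $\O(\Pic Y)$ and then invoking Proposition~\ref{main-result}. To do this it needs \emph{two} independent reflections in $R_Y$: besides the obvious one coming from the orbit $\{C_1,C_2\}$, it constructs a second Galois orbit of disjoint $(-2)$-curves $\{R,R^\sigma\}$ in the class $6D_1-9[C_1]-10[C_2]$ and its conjugate. Establishing that this class is represented by an irreducible curve requires a nontrivial effectivity argument. The product $r_1r_2$ then has infinite order, and an explicit description of $\O(L_M)$ via the unit group of $\Q(\sqrt{10})$ (Proposition~\ref{prop:aut-y-z-infinite}\,(\ref{itemLM})) shows that any infinite normal subgroup has finite index.

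Your approach bypasses all of this. You use only that $R_Y$ is nontrivial (the single orbit $\{C_1,C_2\}$ suffices), together with the rank-$2$ geometry: $\Nef(Y)$ is a two-dimensional pointed cone with exactly two extreme rays, and since $\Nef(Y)\cap\pos_Y$ is a proper fundamental domain, at least one of those rays is non-isotropic. The elementary observation that an orthochronous isometry of a Lorentzian plane fixing two distinct rays of $\overline{\pos_Y}$, one of them non-null, must be the identity then forces the image of $\Aut Y$ in $\O(\Pic Y)$ to have order at most~$2$.

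What each approach buys: yours is shorter, needs no new curves and no arithmetic of $\O(L_M)$, and would work verbatim for any rank-$2$ K3 surface with $R_Y\neq 1$. The paper's argument, on the other hand, actually identifies both walls of $\Nef(Y)$ as reflection walls of $R_Y$ and exhibits the second family of rational curves explicitly, which is of independent interest and motivates the remarks following the proof.
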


\begin{proof}
  Let $D_1 = [H] + [C_1] + [C_2]$ and $D_2 = [C_1] + [C_2]$, so that
  $D_1, D_2$ have the intersection matrix $M$.  Since $C_1, C_2$ are disjoint
  conjugate rational curves,
  the product $r_1$ of the reflections in their classes is in our reflection group
  $R_Y$ (Definition~\ref{defn:rx}).
  In the given basis, $r_1$ acts on $\Pic Y$ with matrix
  \[A_1 = \begin{pmatrix}1&0\cr 0&-1\cr\end{pmatrix}.\]
  
  We now show that there is a smooth rational curve in the class
  $F = 6D_1 - 9[C_1] - 10[C_2]$, defined over the same field $K_Y$ as the
  $C_i$.  The class $F$ is effective, since it has self-intersection $-2$
  and positive intersection with the very ample divisor class $[H]$.  An
  irreducible curve of self-intersection $-2$ must be rational and smooth, so it suffices
  to show that $F$ has no nontrivial expression as a sum of effective
  classes.

  Let $E$ be an effective divisor with $[E] = aD_1 - b_1 [C_1] - b_2 [C_2]$.
  By Remark~\ref{rem:bdry-ample}, $D_1$ is nef, so
  $0 \le [E] \cdot D_1 = 10a$,
  and therefore $a \ge 0$.  
  Moreover, if $E$ is irreducible, then the inequality $E^2 \geq -2$ yields
  $10a^2\geq 2b_1^2+2b_2^2-2$, and if furthermore we have $a>0$, then the 
  inequality $C_i \cdot E \geq 0$ gives $b_i \geq 0$. 

  We write the class $F$ as a sum $F = \sum_{i=1}^s [E_i]$ of classes of irreducible 
  effective divisors $E_1, \ldots, E_s$, and we write $[E_i] = a_iD_i - b_{i,1}[C_1] - b_{i,2}[C_2]$. 
  We saw above that $a_i \geq 0$ for each $i$, so from $\sum_i a_i=6$, we find 
  $a_i \leq 6$ for each $i$. Furthermore, from $\sum_i (b_{i,1}+b_{i,2}-3a_i) = 9+10 - 3\cdot 6 = 1 >0$, 
  we conclude that there is an $i$ with $b_{i,1}+b_{i,2} > 3a_i$, and hence $b_{i,1}+b_{i,2} \geq 3a_i+1$. 
  Again from the above, we find for such $i$ that
  \[
  10a_i^2 \geq 2b_{i,1}^2+2b_{i,2}^2-2 \geq (b_{i,1}+b_{i,2})^2 - 2 \geq (3a_i+1)^2-2, 
  \]
  which implies $a_i \geq 6$, so $a_i = 6$. The left- and right-hand sides of the sequence of inequalities above 
  then differ by only $1 = 360-359$, so we also find $b_{i,1}+b_{i,2} = 3a_i+1=19$  (not $-19$ because $b_{i,j} \ge 0$) and 
  $(2b_{i,1}^2+2b_{i,2}^2) - (b_{i,1}+b_{i,2})^2 \leq 1$.
  Equivalently, $(b_{i,1}-b_{i,2})^2 \leq 1$, which yields 
  $[E_i] = F$ or $[E_i] = F' = 6D_1 - 10[C_1]-9[C_2]$.  But $F - [E_i]$ is
  effective by definition, and $F - F' = [C_1] - [C_2]$ is not effective (the intersection with the ample divisor $H$ is $0$), so
  $[E_i] = F$ and
  $F$ is indeed the class of 
  an irreducible curve, say $R$. 
  
  Let $\sigma$ be an automorphism of $\bar \Q$ that exchanges $C_1$ and $C_2$.
  Then the class $F^\sigma=[R^\sigma]$ is $6D_1 - 10[C_1] - 9[C_2]$, and one checks
  immediately that $F \cdot F^\sigma = 0$.
  The product $r_2$ of the reflections in $F$ and $F^\sigma$ has matrix
  \[A_2 = \begin{pmatrix}721&456 \cr -1140&-721\cr \end{pmatrix}\]
  with respect to the 
  basis $(D_1,D_2)$ of $\Pic Y$.  Now, $r_1 r_2$ is an element of $R_Y$ of infinite order.
  From Proposition \ref{prop:aut-y-z-infinite} (\ref{itemLM}), it follows that the
  quotient $\O(\Pic Y)/R_Y$ is finite and hence that
  $\Aut Y$ is finite.
\end{proof}

\begin{remark} Given equations for $Y$, it is quite practical to make
  the curve $R$ used in the proof explicit.
  For any variety $V \subset \P^4$, let $I(V)$ be the ideal in the homogeneous coordinate ring of
  $\P^4$ of polynomials that vanish on $V$.  Then the dimension of the
  degree-$6$ part of $((I(Y)+I(C_1)^3) \cap (I(Y)+I(C_2)^4))/I(Y)$ is $1$;
  let $f_6$ be a generator, so that the divisor cut out on $Y$ by $f_6$ is
  of the form $C + 3C_1 + 4C_2$.  Using Magma one checks that $C$ is indeed an
  irreducible curve of arithmetic genus $0$.
\end{remark}

\begin{remark}
  The numerical properties of $F$ follow from the fact that $(19,6)$ is a
  solution to the Pell equation $x^2 - 10y^2 = 1$.  More generally,
  let $k$ be even and suppose that $D_1, [C_1], [C_2]$ are classes of
  pairwise disjoint divisors on a surface with $D_1^2 = k$ and $C_1^2 = C_2^2$,
  where $C_1$ and $C_2$ constitute a Galois orbit.
  Suppose further that
  $m^2 - kn^2 = 1$: clearly $m$ is odd.  Let $D$ be the divisor class
  $nD_1 - \lfloor \frac{m}{2} \rfloor [C_1] - \lceil \frac{m}{2} \rceil [C_2]$
  and let $D^\sigma$ be its Galois conjugate.  Then
  $D^2 = kn^2 - 2 \left(\frac{m-1}{2}\right)^2 - 2 \left(\frac{m+1}{2}\right)^2 = -2$
  and $D \cdot D^\sigma = kn^2 - 4\frac{(m-1)(m+1)}{4} = 0$.
\end{remark}

  The remainder of this section will be devoted to constructing a K3 surface~$Y$ satisfying the
  conditions of Proposition~\ref{prop:describe-u}.  The verification will be
  more complicated than that of Example~\ref{ex:construct-t}, because the
  Picard number over $\bar \Q$ in this example is odd, so that the reduction
  map cannot induce an isomorphism of $\Pic Y_{\bar \Q}$ to
  $\Pic Y_{\bar \F_p}$.  We use the following
  proposition.

  \begin{proposition}\label{prop:how-to-find-u}
    Let $Y$ be a smooth intersection of a quadric and a cubic in $\P^4(\Q)$
    containing two disjoint conics
    $C_1, C_2$ that are defined and conjugate over a quadratic field $K_Y$.
    Let $p \ne 2, q$ be primes such that:
    \begin{enumerate}
    \item $Y$ has good reduction at $p$ and $q$;
    \item the reduction $Y_{\F_p}$ contains a line $L$ disjoint from
      the reductions of $C_1, C_2$, and its Picard group over $\bar\F_p$
      has rank $4$;
    \item the reduction $Y_{\F_q}$ base changed to $\bar \F_q$
      contains no lines disjoint from the reductions of $C_1$ and $C_2$.
    \end{enumerate}
    Then $Y_{\bar \Q}$ has Picard number $3$ and hence $Y$ satisfies the
    conditions of Proposition \ref{prop:describe-u}.
  \end{proposition}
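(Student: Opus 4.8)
The plan is to show that the three numerical hypotheses force $\Pic Y_{\bar\Q}$ to have rank exactly $3$; combined with the fact that $Y$ contains the disjoint conjugate conics $C_1, C_2$, this is precisely what is needed to invoke Proposition~\ref{prop:describe-u}. We already know $\rho(Y_{\bar\Q}) \ge 3$, since $[H], [C_1], [C_2]$ span a rank-$3$ sublattice (their Gram matrix $N$ being nondegenerate). So the real content is the upper bound $\rho(Y_{\bar\Q}) \le 3$, and for this we exploit the reduction at $q$.

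First I would recall the standard specialisation fact: for a prime $\ell$ of good reduction, the reduction map $\Pic Y_{\bar\Q} \to \Pic Y_{\bar\F_\ell}$ is injective and respects the intersection pairing, so $\rho(Y_{\bar\Q}) \le \rho(Y_{\bar\F_\ell})$ for both $\ell = p$ and $\ell = q$. Hypothesis~(2) gives $\rho(Y_{\bar\F_p}) = 4$, hence $\rho(Y_{\bar\Q}) \in \{3,4\}$. To rule out $4$, suppose $\rho(Y_{\bar\Q}) = 4$. Then the reduction map at $p$ is an isometry onto $\Pic Y_{\bar\F_p}$, which is rank $4$ and by hypothesis~(2) contains the class of the line $L$, disjoint from the reductions of $C_1, C_2$. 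Its preimage would be a class $\lambda \in \Pic Y_{\bar\Q}$ with $\lambda^2 = -2$, $\lambda \cdot [C_i] = 0$, and (after possibly negating, using that $\lambda$ or $-\lambda$ is effective and that $\lambda$ reduces to an effective class) effective. An effective $(-2)$-class that is orthogonal to the nef class $[H]+[C_1]+[C_2]$... one has to be a little careful here, so instead I would argue via reduction at $q$. The same isometry argument now goes the other way: if $\rho(Y_{\bar\Q}) = 4$, then reduction at $q$ is an isometry onto $\Pic Y_{\bar\F_q}$, so $\Pic Y_{\bar\F_q}$ contains a $(-2)$-class orthogonal to the reductions of $C_1$ and $C_2$, and reducing an effective representative (which exists on $Y_{\bar\Q}$ since $\lambda$ or $-\lambda$ is effective, and effectivity is preserved under reduction) shows $Y_{\bar\F_q}$ contains an effective $(-2)$-class $\bar\lambda$ disjoint from the reduced conics. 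Such a class is a sum of classes of smooth rational curves, at least one of which must again be orthogonal to both reduced conics and is therefore a $(-2)$-curve $D$ on $Y_{\bar\F_q}$ disjoint from them. Intersecting with the hyperplane class $\bar H$, which reduces from $[H]$: since the rank-$4$ lattice is generated by $\bar H, \bar{C_1}, \bar{C_2}, D$ and $D$ is orthogonal to $\bar{C_1}, \bar{C_2}$, the Hodge index theorem and the sign of $D^2 = -2$ pin down $D \cdot \bar H$ to a small set of values; in fact $D \cdot \bar H = 1$ (it cannot be $0$ as $\bar H$ is ample, and the discriminant of the rank-$4$ lattice constrains it), so $D$ is a line on $Y_{\bar\F_q}$ disjoint from the reductions of $C_1$ and $C_2$, contradicting hypothesis~(3).

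The cleanest way to organise the last step is lattice-theoretic: the sublattice $\langle \bar H, \bar{C_1}, \bar{C_2}, D\rangle \subseteq \Pic Y_{\bar\F_q}$ has Gram matrix in block form with $\bar H^2 = 6$ (degree of $Y$), the conic block as in $N$, and the $D$-block $(-2)$, with $D \cdot \bar{C_i} = 0$; the only unknown is $t = D \cdot \bar H \ge 1$. Computing the determinant and imposing the signature $(1,3)$ forces $t$ small, and then checking which curve class of self-intersection $-2$ and the given intersection numbers can occur — it must be a line in the given $\P^4$-model — gives the contradiction. I would present this as a short determinant-and-signature computation rather than grinding through cases.

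The main obstacle, and the step I would spend the most care on, is the claim that $\rho(Y_{\bar\Q}) = 4$ forces the existence of an \emph{effective} $(-2)$-class on $Y_{\bar\F_q}$ disjoint from the reduced conics that comes from a geometric line: passing from "there is a $(-2)$-class orthogonal to $\bar{C_1}, \bar{C_2}$" to "there is an irreducible rational curve, in fact a line, with those properties" requires both the effectivity (handled by $\lambda^2 = -2 \Rightarrow \pm\lambda$ effective, compatibility of effectivity with good reduction, and the disjointness from the nef conics to fix the sign) and the identification of the curve as a line via the intersection number with the very ample $\bar H$. Everything else is either the standard specialisation inequality or routine lattice arithmetic with the matrices $M$ and $N$.
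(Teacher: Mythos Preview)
Your overall strategy matches the paper's: lift the line $L$ from $\bar\F_p$ to $\bar\Q$, then specialize to $\bar\F_q$ to contradict hypothesis~(3). But two steps are genuinely incomplete.

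First, the claim that ``the reduction map at $p$ is an isometry \emph{onto} $\Pic Y_{\bar\F_p}$'' when $\rho(Y_{\bar\Q})=4$ does not follow from injectivity alone: an injective map between free $\Z$-modules of the same rank need only have finite-index image. You need that the cokernel of $s_p$ is torsion-free, which is the content of the Elsenhans--Jahnel result the paper cites. Without it you cannot lift $[L]$ to a class $\lambda\in\Pic Y_{\bar\Q}$, and the rest of the argument never gets started.

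Second, your passage from an effective $(-2)$-class $\bar\lambda$ on $Y_{\bar\F_q}$ to an actual \emph{line} disjoint from the $\bar C_i$ does not work as written. You assert that $s_q$ is an isometry onto $\Pic Y_{\bar\F_q}$, but nothing is known about $\rho(Y_{\bar\F_q})$; and the proposed determinant/Hodge-index computation on the sublattice $\langle \bar H,\bar C_1,\bar C_2,D\rangle$ gives determinant $-4t^2-80$ with $t=D\cdot\bar H$, which is negative for every $t$ and so imposes no constraint. The component-orthogonality claim is also unjustified in general. The fix is much simpler and is what the paper does: since $s_p$ preserves intersection numbers, the lift $\lambda$ satisfies $\lambda\cdot H=[L]\cdot H_p=1$. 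An effective class of degree~$1$ in the very ample $H$ is the class of a single line on $Y_{\bar\Q}$ itself; this line is disjoint from $C_1,C_2$ (intersection numbers zero), and under good reduction at $q$ a line in $\P^4$ specializes to a line. That immediately contradicts~(3), with no need to analyse components or the Picard lattice of $Y_{\bar\F_q}$.
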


  \begin{remark}
    To speak of $Y_{\F_p}$ we first need to choose a model of $Y$ over $\Z$.
    When we give equations with integral coefficients for $Y$ over $\Q$, it is
    understood that the model over $\Z$ is defined by the same equations.
    In each case it may be checked 
    that the ideal defining this model is generated by the intersection of
    the ideal defining $Y$ over $\Q$ with the ring of polynomials in the
    same variables with integral coefficients.
  \end{remark}

  \begin{proof}
    Let $s_p$ be the specialization homomorphism
    $\Pic Y_{\bar \Q} \to \Pic Y_{\bar \F_p}$, which is injective by
    \cite[Proposition 6.2]{rvl}.  Then by
    \cite[Theorem 1.4]{elsenhans-jahnel} the cokernel of $s_p$ is torsion-free.
    Note that specialization preserves intersection numbers \cite[Section 20.3]{fulton}.
    If $s_p$ is surjective, the class of $L$
    is in the image.  The class $L_0$ with $s_p(L_0) = [L]$ has positive
    intersection with the hyperplane class and self-intersection $-2$, so
    it is effective; its intersection with the hyperplane class is $1$, 
    so it is the class of a line on $Y_{\bar \Q}$.  
    We have $[L] \cdot s_p(C_i) = 0$, so $L_0 \cdot C_i = 0$ and
    $s_q(L_0) \cdot s_q(C_i) = 0$ as well.  This contradicts hypothesis (3).
    It follows that
    $\rk \Pic Y_{\bar \Q} < \rk \Pic Y_{\bar \F_p} = 4$.  On the other
    hand, the hyperplane class in $\P^4$ and the classes of $C_1, C_2$ are
    independent, so $\rk \Pic Y_{\bar \Q} \ge 3$.
  \end{proof}

  In our construction, we will use $p = 3, q = 5$. 
  We will take the conics $C_1, C_2$
  to be defined over $\Q(\sqrt{19})$, the smallest quadratic field in which
  both $p$ and $q$ split.  Let $\rho = \sqrt{19}$, and denote the coordinates
  on $\P^4$ by $x_0, \dots, x_4$.  We choose the conic $C_1$ defined by
  \[
  \begin{aligned}
    &(-2\rho + 2)x_0 + x_2 + (\rho - 2)x_3 + (-\rho - 2)x_4 = \\
    &(2\rho + 1)x_0 + (\rho + 2)x_1 + 2x_2 + (\rho + 2)x_3 + (-\rho + 2)x_4 = \\
    & \quad (-\rho - 2)x_0^2 + (-2\rho + 1)x_0x_1 + (-2\rho - 1)x_1^2 + (-\rho + 2)x_0x_2 + (-\rho + 2)x_1x_2 \\
    & \qquad + (\rho - 2)x_2^2 + (2\rho + 1)x_0x_3 + (2\rho - 2)x_1x_3 + (2\rho + 2)x_2x_3 - 2x_3^2 \\
    & \qquad + (\rho + 2)x_0x_4 + (-\rho + 2)x_1x_4 + (-\rho + 1)x_2x_4 - x_3x_4 + 2x_4^2 = 0
  \end{aligned}
  \]
  and let $C_2$ be its Galois conjugate.
  Next, let $L$ be the line in $\P^4(\F_3)$ through
  $(1:1:2:0:1),(2:1:1:1:0)$
and let $C_{3,1}, C_{3,2}$ be the reductions of $C_1$ and $C_2$, respectively, at one of the two primes
above~$3$.

\begin{proposition}\label{prop:char-3}
The surface $Y_3$ in $\P^4(\F_3)$ defined by $F_3=G_3=0$, where
\[
\begin{aligned}
  F_3 &= 2x_0x_1 + x_1x_2 + x_2^2 + 2x_0x_3 + x_1x_3 + x_3^2 + 2x_0x_4 + x_3x_4 
  + 2x_4^2, \\
  G_3 &= 2x_0x_1^2 + 2x_0^2x_2 + 2x_0x_1x_2 + 2x_1^2x_2 + 2x_0x_2^2 +  2x_1x_2^2 + 2x_2^3 + 2x_0x_1x_3 + x_1^2x_3 \\
  & \quad + 2x_0x_2x_3 +  2x_1x_2x_3 + 2x_0x_3^2 + 2x_2x_3^2 + 2x_3^3 + x_0^2x_4 + 
  2x_1^2x_4 + 2x_0x_2x_4 \\
  & \quad + x_2^2x_4 + 2x_0x_3x_4 + 2x_2x_3x_4 + 
  x_3^2x_4 + 2x_0x_4^2 + x_1x_4^2 + x_2x_4^2 + x_4^3,
\end{aligned}
\]
is smooth, contains $L, C_{3,1}$, and $C_{3,2}$, and has Picard number $4$.
\end{proposition}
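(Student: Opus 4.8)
The statement has four components, which I would treat in increasing order of difficulty. First, $Y_3$ is cut out in $\P^4_{\F_3}$ by a quadric and a cubic, so once smoothness is established it is a K3 surface of degree $6$, with $\dim \H^2_\et(\bar Y_3,\Q_\ell)=22$. To verify smoothness I would form the ideal generated by $F_3$, $G_3$ and the $2\times2$ minors of the Jacobian matrix $\partial(F_3,G_3)/\partial(x_0,\dots,x_4)$, saturate it with respect to the irrelevant ideal, and check that the result is the unit ideal; this is a routine Gr\"obner basis calculation over $\F_3$. That $Y_3$ contains $L$, $C_{3,1}$ and $C_{3,2}$ is verified by substituting parametrisations of these curves into $F_3$ and $G_3$.

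For the lower bound on the Picard number, let $H$ be the hyperplane class. Since $L$ is a line, $C_{3,1}$ and $C_{3,2}$ are conics, and the three curves are pairwise disjoint (which, for $C_{3,1}\cdot C_{3,2}$, is checked directly for the explicit conics), the classes $H,[L],[C_{3,1}],[C_{3,2}]$ in $\Pic Y_{3,\bar\F_3}$ have Gram matrix
\[
\begin{pmatrix}
6 & 1 & 2 & 2\\
1 & -2 & 0 & 0\\
2 & 0 & -2 & 0\\
2 & 0 & 0 & -2
\end{pmatrix},
\]
whose determinant is $-84\ne0$. Hence these four classes are independent and $\rho(Y_{3,\bar\F_3})\ge4$.

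The main work is the matching upper bound $\rho(Y_{3,\bar\F_3})\le4$, for which I would follow the method of van Luijk used in Example~\ref{ex:construct-t}. One counts $\#Y_3(\F_{3^n})$ for $n$ up to about $9$; by the Lefschetz trace formula the trace of the $n$th power of Frobenius on $\H^2_\et(\bar Y_3,\Q_\ell)$ is $\#Y_3(\F_{3^n})-3^{2n}-1$, and dividing by $3^n$ gives the traces on the Tate twist $\H^2_\et(\bar Y_3,\Q_\ell(1))$. Because the line $L$ and the conics $C_{3,1},C_{3,2}$ are defined over $\F_3$ — the residue field of $\Q(\sqrt{19})$ at a prime above $3$ is $\F_3$, since $3$ splits — these curves together with $H$ contribute a factor $(t-1)^4$ to the characteristic polynomial $f$ of Frobenius on $\H^2_\et(\bar Y_3,\Q_\ell(1))$. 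Writing $f=(t-1)^4 g$ with $g$ of degree $18$, Poincar\'e duality forces the roots of $g$ to occur in pairs $\alpha,1/\alpha$ on the unit circle, so $g$ satisfies a functional equation $t^{18}g(1/t)=\pm g(t)$; the Newton identities then recover enough coefficients of $g$ from the point counts, including the middle one, whose sign fixes the sign in the functional equation, to determine $g$ completely, exactly as in Example~\ref{ex:construct-t}. One then verifies that, after clearing denominators, the transcendental part of $f$ is an irreducible, primitive polynomial in $\Z[t]$ that is not monic; its roots are therefore not algebraic integers, hence not roots of unity, and \cite[Corollary~6.4]{rvl} gives $\rho(Y_{3,\bar\F_3})\le4$. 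Combined with the lower bound, this yields $\rho(Y_{3,\bar\F_3})=4$.

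The main obstacle is this last step. On the one hand the point counts over $\F_{3^n}$ for $n$ up to roughly $9$ are computationally heavy: naive enumeration over $\P^4$ is infeasible, so one needs either a point-counting routine exploiting the complete-intersection structure (for instance iterating over the rational quadric threefold $\{F_3=0\}$) or a $p$-adic cohomology computation in the style of Abbott--Kedlaya--Roe. On the other hand, and more subtly, the method only succeeds because the equations $F_3,G_3$ were chosen so that the transcendental factor of $f$ has no cyclotomic divisor; verifying this (irreducibility together with non-monicity after clearing denominators) is precisely what pins the Picard number down to exactly $4$, rather than leaving a larger even upper bound that would require a second prime or the Artin--Tate formula to reduce.
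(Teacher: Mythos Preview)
Your proposal is correct and follows essentially the same route as the paper: smoothness and containment by direct computation, the lower bound via the Gram matrix of $H,[L],[C_{3,1}],[C_{3,2}]$, and the upper bound by van Luijk's method (point counts, Lefschetz trace, Newton identities, functional equation, then irreducibility and non-monicity of the degree-$18$ cofactor after clearing denominators). One small wording slip: it is the \emph{nonvanishing} of the middle coefficient of $g$, not its sign, that forces the sign in $t^{18}g(1/t)=\pm g(t)$ to be~$+1$.

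The one substantive point the paper supplies that you correctly flag as the obstacle is the point-counting method. Rather than iterating over the quadric threefold (infeasible at $n=9$, roughly $3^{27}$ points) or invoking $p$-adic cohomology machinery, the paper exploits the geometry already present: the class $H-[C_{3,1}]$ gives a genus-$1$ fibration on $Y_3$ for which the line $L$ is a section, so one loops over the $3^n+1$ points of $\P^1(\F_{3^n})$ and counts points on each elliptic fibre. This makes $n\le 9$ entirely practical and yields the explicit degree-$18$ factor displayed in the paper.
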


\begin{proof} The verification that $Y_3$ is smooth and contains the given curves is routine.
To see that $Y_3$ has Picard number $4$, we proceed as in Example~\ref{ex:construct-t}.
In order to count the number of points on $Y_3$ over $\F_{3^n}$ for $1\leq n \leq 9$, we 
write $Y_3$ as an elliptic surface using the fibration of Picard class $H - C_{3,1}$, for which $L$ is a
section, and we sum the number of points of the fibres. We obtain point counts as in Table~\ref{counts2}.

\begin{table}[tb]
\caption{Point counts on the surface $Y_3$}\label{counts2}
\begin{tabular}{c|c}
$n$ & $\#Y_3(\F_{3^n})$ \\\hline
$1$ & $18$ \\
$2$ & $104$ \\
$3$ & $846$ \\
$4$ & $6776$ \\
$5$ & $59658$ \\
$6$ & $532694$ \\
$7$ & $4790811$ \\
$8$ & $43068056$ \\
$9$ & $387398079$
\end{tabular}
\end{table}

As in Example~\ref{ex:construct-t}, we find that the trace of the $n$th power of Frobenius acting on $\H^2_\et(\bar{Y}_3,\Q_\ell(1))$ is equal to $\#Y_3(\F_{3^n})/3^n - 3^{n} - 3^{-n}$, while on the subspace~$V \subset \H^2_\et(\bar{Y}_3,\Q_\ell(1))$ generated by $H, C_{3,1}, C_{3,2}$, and $L$, the trace is equal to $4$; hence, on the $18$-dimensional quotient 
$Q=\H^2_\et(\bar{Y}_3,\Q_\ell(1))/V$, the trace equals $\#Y_3(\F_{3^n})/3^n - 3^{n} - 3^{-n} - 4$. 
Again, these traces, together with the Newton identities, determine the coefficients of the characteristic polynomial $f$ of Frobenius acting on~$Q$. The point counts in Table~\ref{counts2} yield the first half of the coefficients of $f$, including the middle coefficient, which turns out to be nonzero. This implies that the sign in the functional equation $t^{18}f(1/t) = \pm f(t)$ is $+1$, 
so this functional equation determines $f$, which we calculate to be
\begin{align*}
f = \tfrac{1}{3} \big( 3t^{18} &- 4t^{17} + 5t^{16} - 4t^{15} + 4t^{14} - 4t^{13} + 5t^{12} - 5t^{11} + 5t^{10}  \\
          &- 6t^9 + 5t^8 - 5t^7 + 5t^6 - 4t^5 + 4t^4 - 4t^3 + 5t^2 - 4t + 3 \big). 
\end{align*}
As the characteristic polynomial of Frobenius acting on $V$ is $(t-1)^4$, we find that the characteristic polynomial of Frobenius acting on 
$\H^2_\et(\bar{Y}_3,\Q_\ell(1))$ is $(t-1)^4f$. 
The polynomial $3f$ is irreducible, primitive and not monic, so its roots are not roots of unity. 
Thus we obtain an upper bound of $4$ for the Picard rank of $\bar{Y}_3$.
The rank is equal to $4$ as the Picard group contains the linearly independent classes of $H$, 
$L$, $C_{3,1}$, and $C_{3,2}$.
\end{proof}

Now let $C_{5,1}, C_{5,2}$ be the reductions of $C_1$ and $C_2$, respectively, at one of the two primes above $5$.

\begin{proposition}\label{prop:char-5}
Let $Y_5$ be the surface in $\P^4_{\F_5}$ defined by $F_5=G_5=0$, where
\[
\begin{aligned}
  F_5 &= 2x_0^2 + x_0x_1 + 3x_1^2 + 2x_0x_2 + 2x_2^2 + 2x_1x_3 + 2x_2x_3 + 
  3x_3^2 + 3x_0x_4 + 2x_2x_4, \\
  G_5 &= x_0^2x_1 + 3x_0x_1^2 + 2x_1^3 + 4x_0x_1x_2 + x_1^2x_2 + 2x_1x_2^2 + 
  3x_2^3 + 2x_0^2x_3 + 4x_1^2x_3 \\
  & \quad + 2x_0x_3^2 + 2x_1x_3^2 + 
  2x_2x_3^2 + 2x_3^3 + x_0x_1x_4 + 4x_1^2x_4 + x_0x_2x_4 + 
  2x_2^2x_4 \\
  & \quad + 3x_1x_3x_4 + 2x_2x_3x_4 + 3x_3^2x_4 + 3x_0x_4^2 + 
  2x_1x_4^2 + 3x_2x_4^2 + x_3x_4^2.
\end{aligned}
\]
Then $Y_5$ is smooth.  It contains $C_{5,1}$ and $C_{5,2}$, but no lines over $\bar \F_5$
that are disjoint from $C_{5,1}$.
\end{proposition}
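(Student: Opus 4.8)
The proposition has three parts. Smoothness is a routine application of the Jacobian criterion: a Gr\"obner basis computation over $\F_5$ shows that the ideal generated by $F_5$, $G_5$ and the $2\times 2$ minors of the $2\times 5$ Jacobian matrix of $(F_5,G_5)$ contains a power of the irrelevant ideal $(x_0,\dots,x_4)$, so $Y_5$ has no singular point over $\bar\F_5$. That $Y_5$ contains $C_{5,1}$ and $C_{5,2}$ is checked by substituting explicit parametrizations of these two conics into $F_5$ and $G_5$ and observing that the results vanish; equivalently one verifies that $F_5$ and $G_5$ lie in the homogeneous ideals of $C_{5,1}$ and of $C_{5,2}$, obtained by reducing the linear forms and quadric defining $C_1$ and $C_2$ modulo the chosen prime above $5$.

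For the last statement the plan is to compute the scheme of lines on $Y_5$ explicitly. Cover the Grassmannian $\mathrm{Gr}(2,5)$ of lines in $\P^4$ by its ten standard affine charts, in each of which a line is represented by a $2\times 5$ matrix consisting of a fixed $2\times 2$ identity submatrix together with six free entries, and parametrize the associated line by $[s:t]\mapsto P(s,t)$ with $P$ linear in $(s,t)$ and affine-linear in the six chart coordinates. Substituting into $F_5$ gives a binary quadratic form in $(s,t)$ and into $G_5$ a binary cubic; the line lies on $Y_5$ precisely when the three coefficients of the former and the four coefficients of the latter all vanish, which defines an ideal in the six chart coordinates. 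A Gr\"obner basis computation over $\F_5$, together with base change to $\bar\F_5$, shows that the resulting scheme is zero-dimensional and produces the finitely many lines on $\bar Y_5$ explicitly, each defined over a small extension of $\F_5$. (That this scheme must be finite is also clear a priori: a point count of Frobenius on $\H^2_\et(\bar Y_5,\Q_\ell)$, carried out exactly as for $Y_3$ in Proposition~\ref{prop:char-3}, bounds the geometric Picard number of $Y_5$ strictly below $22$, so $Y_5$ is not supersingular, hence not uniruled, hence not swept out by a family of lines.) For each line $\ell$ on the resulting list one then computes the scheme-theoretic intersection $\ell\cap C_{5,1}$ by intersecting homogeneous ideals and checks that it is non-empty; since every line on $\bar Y_5$ meets $C_{5,1}$, there is no line disjoint from $C_{5,1}$.

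The difficulty here is organizational and computational rather than conceptual. One must carry out the Gr\"obner basis computations over $\F_5$ and its small extensions, treat all ten charts of the Grassmannian so that no line is missed, and confirm that the scheme of lines is finite so that the ``enumerate and verify'' strategy is valid. Given the equations for $Y_5$, $C_{5,1}$ and $C_{5,2}$, each of these is a standard task for a computer algebra system such as Magma; the essential point is simply that the list of lines on $\bar Y_5$ is finite and that each entry meets $C_{5,1}$.
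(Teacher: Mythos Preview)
Your proposal is correct but follows a different route from the paper's.  For the first two assertions (smoothness and containment of the conics) your treatment coincides with the paper's: these are routine verifications.  For the absence of lines disjoint from $C_{5,1}$, the paper does not enumerate lines on $Y_5$ at all.  Instead it re-embeds $Y_5$ into $\P^5$ by the linear system $|H+C_{5,1}|$; since a line $L$ disjoint from $C_{5,1}$ satisfies $L\cdot(H+C_{5,1})=1$, such an $L$ would remain a line in the new model $T_5\subset\P^5$, which is a complete intersection of three quadrics.  The Fano scheme of lines on $T_5$ is then computed (as the intersection of the Fano schemes of the three quadrics, taken in the Pl\"ucker $\P^{14}$) and found to be empty.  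This converts ``no line disjoint from $C_{5,1}$'' into the cleaner assertion ``$T_5$ contains no lines'', and the paper remarks that the direct Pl\"ucker computation for $Y_5$ in $\P^9$ is in practice very slow.  Your affine-chart parametrisation of $\mathrm{Gr}(2,5)$ sidesteps that difficulty and is perfectly legitimate; you would in fact discover that $Y_5$ contains no lines whatsoever (this is stated in the paper as a remark), so the final step of checking intersections with $C_{5,1}$ is vacuous.

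One small comment on your parenthetical a priori argument: the chain ``Picard number $<22$, hence not supersingular, hence not uniruled, hence not swept by lines'' is heavier machinery than needed and the middle implication is delicate in positive characteristic.  The cleaner observation is that a line on a K3 surface is a smooth rational $(-2)$-curve, so its normal bundle is $\mathcal{O}_{\P^1}(-2)$, which has no sections; hence each line is infinitesimally rigid and the Fano scheme is automatically zero-dimensional.  This does not affect the validity of your proof, since you verify zero-dimensionality computationally anyway.
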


\begin{proof}
  Again, it is easy to check that $Y_5$ is smooth and that the $C_{5,i}$ are on $Y_5$.
It is also easy to verify directly that
$Y_5$ contains no lines defined over $\F_5$ by checking that no line through
two distinct $\F_5$-points of $Y_5$ is contained in $Y_5$;
however, this is not sufficient.
Suppose that $L$ is a line contained in $Y_5$ and disjoint from $C_{5,1}$.
We embed $Y_5$ in $\P^5_{\F_5}$ by the divisor class $H + [C_{5,1}]$, where
$H$ is the hyperplane class in $\P^4_{\F_5}$.  
In this embedding, $L$ is still a line, because $L \cdot (H+[C_{5,1}]) = 1$.

The image is a surface $T_5$ defined by three quadrics $Q_1, Q_2, Q_3$.
For each $Q_i$, let $Z_i$ be the subscheme of $\P^5 \times \P^5$ consisting
of pairs of points $P, P'$ for which $P = P'$ or the line joining
$P$ to $P'$ is on the variety defined by $Q_i = 0$.  The image of
$Z_i$ in $\P^{14}$ under the Pl\" ucker embedding is isomorphic to the
Fano scheme of lines on $Q_i$.  It is easy to check in Magma that the
intersection of the images of the $Z_i$ in $\P^{14}$ is empty, which implies
that there are no lines on $T_5$ even over $\bar \F_5$.
\end{proof}

\begin{remark} In fact, we claim that $Y_5$ contains no lines at all.  Indeed, a calculation similar 
   to the one in the proof of Proposition~\ref{prop:char-5} shows
  that there are also no lines on $Y_5$ disjoint from $C_{5,2}$.  To finish the proof of the claim, 
  it suffices to show that there are no lines that meet both $C_{5,1}$ and $C_{5,2}$. 
  Let $Z_2$ be the closure of the subset of 
  $\P^4_{\F_5} \times \P^4_{\F_5}$ consisting of all pairs of points $(x,y)$ such that $x \neq y$ and 
  the line through $x$ and $y$ lies on the hypersurface defined by $F_5$. 
  Define $Z_3$ similarly for $G_5$.   Since $C_{5,1}$ and $C_{5,2}$ are disjoint,
  the intersection $Z_2 \cap Z_3 \cap (C_{5,1} \times C_{5,2})$ consists of all pairs $(x,y)$ with 
  $x \in C_{5,1}$ and $y\in C_{5,2}$ for which there is a line on $Y_5$ through $x$ and $y$. 
  We then check in Magma that this intersection is
  empty.  This is done in our Magma scripts \cite{script}.

  In principle one can prove directly that $Y_5$ has no lines by checking that
  the images of $Z_2$ and $Z_3$ under the Pl\" ucker embedding in $\P^9$ are
  disjoint, but this calculation is very slow.
\end{remark}

It remains to show that there exists a surface $Y \subset \P^4_\Q$ containing the conics $C_1,C_2$ and lifting the surfaces $Y_3,Y_5$ described above.
Denote by $V_2(C_1 \cup C_2, \Q)$ the vector space of homogeneous forms of degree $2$, with coefficients in $\Q$, vanishing on $C_1 \cup C_2$.
One easily computes that $V_2(C_1 \cup C_2, \Q)$ has dimension $5$; let $q_1, \dotsc, q_5$ be a $\Z$-basis for the free abelian group $V_2(C_1 \cup C_2, \Z)$ of forms having integer coefficients.
One also easily computes that the analogous spaces $V_2(C_{3,1} \cup C_{3,2}, \F_3)$ and $V_2(C_{5,1} \cup C_{5,2}, \F_5)$ both have dimension $5$ over $\F_3$ and $\F_5$ respectively, which implies that the images of $q_1, \dotsc, q_5$ span these spaces.
Now the Chinese remainder theorem guarantees the existence of a quadratic form $F \in V_2(C_1 \cup C_2, \Z)$ satisfying both $F \equiv F_3 \pmod 3$ and $F \equiv F_5 \pmod 5$.
Similarly, the analogous spaces of cubic forms vanishing on $C_1 \cup C_2$ are all of dimension $21$, which implies the existence of a cubic form $G$ satisfying $G \equiv G_3 \pmod 3$ and $G \equiv G_5 \pmod 5$.
Remark~\ref{syzygies} below shows that one can avoid explicit computations to calculate these dimensions.
The surface $Y$ given by $F=G=0$ reduces modulo $3$ and $5$ to $Y_3$ and $Y_5$, respectively.

\begin{remark}\label{syzygies}
Let $k$ be any field.  Let $C_1,C_2 \subset \P^4_k$ be two conics; suppose that the two planes spanned by $C_1,C_2$ respectively meet only in one point $P$, and suppose that $P$ does not lie on either $C_1$ or $C_2$.
Then the space of forms of degree $d$ vanishing on $C_1$ and $C_2$ has dimension
\[
4\binom{d+2}{4}  - 4\binom{d+1}{4} + \binom{d}{4} + 2\binom{d}{2} - 1 =
\tfrac{1}{24}d(d-1)(d^2+11d+46) - 1. 
\]
\end{remark}

\subsection{Finite automorphism group for a family of diagonal surfaces}\label{finite-diagonal}
In this example, for which we summarize the calculations in a Magma script
\cite{script}, we consider a quartic surface $Z$ over a field $k$
of characteristic $0$
defined by
\[
x^4 - y^4 = c(z^4 - w^4).
\]
for some nonzero $c \in k$. Conjecture 2.5 of \cite{vluijk} states that if $k$ is the field $\Q$ of rational 
numbers, then for every nonzero $c \in \Q$, the set of rational points on the associated surface $Z$ is dense.
The $48$ lines on a surface in this
family are defined over the field $k(\zeta_8, \root 4 \of c)$.  We assume
that $k, c$ are such that this is a Galois extension of $k$ of degree $16$:
with $k = \Q$, this is equivalent to $|c|$ being neither a square nor twice
a square.
(This Galois action is case A45 of \cite[Appendix A]{bright}.)
We will prove under this assumption that $\Aut Z$ is finite.  The argument
will proceed by the following steps.

\begin{enumerate}
\item Determination of $\Pic Z$, and comparison to the Picard lattice of a
  K3 surface $V$ with finite automorphism group over $\bar k$.
\item Determination of $\O(\Pic V)$ and $\O(\Pic Z)$ up to finite index.
\item Construction of elements of $R_Z$ and verification that
  $\O(\Pic Z)/R_Z$ is finite.
\end{enumerate}

\begin{remark}
  If the characteristic of $k$ is congruent to $1 \bmod 4$, then $k$ contains
  the $4$th roots of $1$ and this Galois action is not possible.  If it is
  $3 \bmod 4$, then $Z$ is a supersingular surface and $Z_{\bar k}$
  has Picard number $22$, so the argument given here does not apply.  If
  $k$ has characteristic $2$, then of course $Z$ is not a smooth surface.
\end{remark}

Let $L_N$ be the lattice with Gram matrix
\[
N = \left( \begin{array}{cc|c} 0&1\\1&0\\ \hline &&-2I_4 \end{array}\right),
\]
as in Section  \ref{finite-from-overlattice}.
  
\begin{lemma}\label{lem:pic-diagonal}
  There is a sublattice of $L_N$ of index $4$ that is isomorphic to $\Pic Z$.
\end{lemma}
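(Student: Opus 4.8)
The plan is to compute $\Pic Z$ as the Galois-invariant part of the geometric Picard lattice of $Z$ and then to realise it explicitly as an index-$4$ sublattice of $L_N = U \oplus 4A_1$.

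First, $Z$ has the $k$-point $(1:1:1:1)$, so the exact sequence following Definition~\ref{defn:rx}, together with Lemma~\ref{lem:sep}, gives $\Pic Z = (\Pic Z_{\bar k})^{\Gk}$. Since $\mathrm{char}\,k = 0$, extracting fourth roots of the coefficients $1,-1,-c,c$ shows that $Z_{\bar k}$ is isomorphic to the Fermat quartic surface; in particular $Z_{\bar k}$ is a K3 surface of Picard number $20$ whose N\'eron--Severi lattice is generated, up to finite index, by the hyperplane class $H$ and the $48$ lines of $Z$. By hypothesis the Galois action on these lines factors through the group $G = \Gal(k(\zeta_8,\sqrt[4]{c})/k)$ of order $16$, and as a permutation action it is case~A45 of \cite[Appendix~A]{bright}.

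Next I would make the invariant lattice explicit from this data. The sublattice $N_0 \subseteq \Pic Z_{\bar k}$ spanned by $H$ and the $48$ lines is $\Gk$-stable, and $N_0^{\Gk}$ is of finite index in $\Pic Z = (\Pic Z_{\bar k})^{\Gk}$; a character computation with the explicit $G$-action on the lines gives $\rk \Pic Z = \tfrac1{16}\sum_{g\in G}\operatorname{tr}\bigl(g \mid \Pic Z_{\bar k}\otimes\Q\bigr) = 6$. Writing down six independent invariant classes --- the hyperplane class together with five suitable sums of lines over $G$-orbits --- and computing their Gram matrix then produces a candidate lattice of rank $6$ and signature $(1,5)$.

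The remaining step, which I expect to be the main obstacle, is to verify that these six classes generate $\Pic Z$ and not merely a proper finite-index subgroup, and that the resulting lattice is isometric to an index-$4$ sublattice of $L_N$; this is carried out in the Magma script \cite{script}. It helps that $\Pic Z$, being the fixed lattice of a finite group acting on $\Pic Z_{\bar k}$, is automatically primitive in $\Pic Z_{\bar k}$, so that $[\Pic Z : N_0^{\Gk}]$ divides the known index $[\Pic Z_{\bar k} : N_0]$ coming from the geometry of lines on the Fermat quartic; once the residual saturation is excluded, one computes $\disc \Pic Z = -256 = 4^2 \cdot \disc L_N$ and exhibits an explicit isometric embedding $\Pic Z \hookrightarrow L_N$ whose cokernel has order $4$, which proves the lemma.
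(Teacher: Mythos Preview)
Your proposal is correct and follows essentially the same route as the paper: identify $\Pic Z$ with the Galois-invariant sublattice of $\Pic Z_{\bar k}$ via the rational point, compute its rank and discriminant from the configuration of the $48$ lines and the Galois action, and then match it with an index-$4$ sublattice of $L_N$. Two small points of comparison are worth noting. First, the paper cites the stronger fact that the $48$ lines generate $\Pic Z_{\bar k}$ on the nose (not merely up to finite index), so your $N_0$ is already all of $\Pic Z_{\bar k}$ and the saturation step you flag as ``the main obstacle'' evaporates: $N_0^{\Gk} = \Pic Z$ immediately. Second, rather than exhibiting an embedding $\Pic Z \hookrightarrow L_N$, the paper goes the other way: it builds an explicit overlattice $\Lambda \supset \Pic Z$ of index $4$ by adjoining $v/2$ for every $v \in \Pic Z$ with $8 \mid (v,v)$ and $2 \mid (v,w)$ for all $w$, and then checks (via the script) that $\Lambda \cong L_N$. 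This is of course equivalent to your embedding, but the intrinsic recipe for $\Lambda$ means one does not have to guess the target lattice in advance.
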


\begin{proof}
  The Picard group of $\bar{Z}$ is generated by the classes of the
  $48$ lines. (See~\cite[Section~6.1]{ssvl} for a history of proofs of this fact.)  
  One easily verifies that the subspace fixed by the action of
  Galois is of rank $6$ and that the intersection form has discriminant $-256$.
  Since the surface has rational points such as $(1,1,0,0)$, the fixed subspace
  of the Picard group of $\bar{Z}$ is in fact the Picard group of $Z$.

  Let $\Lambda$ be the lattice obtained from
  $\Pic Z$ by adjoining $v/2$ for each $v$ in 
  \[
  \{\,\, v  \in \Pic Z \,\, : \,\,  8\mid (v,v) \mbox{ and } 2\mid (v,w) \mbox{ for all } w \in \Pic Z \,\, \}.
  \]
  Then $\Lambda$ contains $\Pic Z$ with index $4$
  and hence has discriminant $-16$.  It is verified in \cite{script}
  that $L_N$ and $\Lambda$ are isomorphic.
\end{proof}

We now proceed to step $2$, determining $\O(L_N)$.
Let $V$ be a K3 surface over $\bar k$ with Picard lattice $L_N$, such as those
constructed in Proposition~\ref{prop:four-a1}.
We choose a basis
$e_1, \dots, e_6$ for $L_N \cong \Pic V$ to consist of the classes
$E, E+O, C_1, C_2, C_3, C_4$, where $E$ is the class of an elliptic fibration
with zero section $O$ and the $C_i$ are the components of the reducible
fibres that do not meet $O$.  In this basis, the Gram matrix of $\Pic V$ is
as above.  Let $G \subset \O(L_N)$ be the image of the symmetric group $\S_4$ by the homomorphism
taking a permutation $\pi$ to the automorphism fixing $E$ and $E+O$ and taking
$C_i$ to $C_{\pi(i)}$.  Let $A_{L_N}$ be the discriminant group of $L_N$ with
its discriminant form, a quadratic form with values in $\Q/2\Z$
\cite[Chapter~14]{huybrechts}.

\begin{lemma}\label{outer-auts} $A_{L_N}$ is generated by the classes of
  $e_i/2$ for $3 \le i \le 6$.  The discriminant form takes
  $\sum_{i=3}^6 a_ie_i/2$ to $-\sum_{i=3}^6 a_i^2/2 \bmod 2$, and the natural map
  $G \to \Aut A_{L_N}$ is an isomorphism.
\end{lemma}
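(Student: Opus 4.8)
My plan is to identify $A_{L_N}$ and its discriminant form explicitly from the decomposition $L_N \cong U \oplus 4A_1$, where $U = \langle e_1, e_2\rangle$ is the hyperbolic plane and $4A_1 = \langle e_3\rangle \perp \dots \perp \langle e_6\rangle$, and then to compute the isometry group of that finite quadratic form directly. First, since $U$ is unimodular, $A_{L_N} = L_N^\vee/L_N$ equals $(4A_1)^\vee/4A_1$; for a single copy $\langle e_i\rangle$ with $e_i^2 = -2$ the dual is $\tfrac12\Z e_i$, so $A_{L_N}$ is generated by the classes of $e_i/2$ for $3 \le i \le 6$ and is isomorphic to $(\Z/2\Z)^4$. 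Evaluating the discriminant quadratic form $q(\bar v) = v\cdot v \bmod 2\Z$ on $v = \sum_{i=3}^6 a_i e_i/2$, and using $e_i\cdot e_j = -2\delta_{ij}$ for $i,j \ge 3$ so that the cross terms vanish, gives $q(\bar v) \equiv -\tfrac12\sum_{i=3}^6 a_i^2 \bmod 2$; replacing some $a_i$ by $a_i+2$ changes $a_i^2/2$ by an even integer, so the formula is well defined on $(\Z/2\Z)^4$. This settles the first two assertions.

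Next I would check that $G \to \Aut A_{L_N}$ is a well-defined injection: each element of $G$ permutes $e_3,\dots,e_6$, hence permutes the generators $e_i/2$, and since the formula for $q$ is symmetric in the $a_i$ this permutation is an isometry of $(A_{L_N},q)$; injectivity holds because a permutation of the basis $\{e_i/2 : 3 \le i \le 6\}$ of $(\Z/2\Z)^4$ that is trivial comes from the trivial permutation. The real content is surjectivity, and I would obtain it from the Hamming-weight structure of $q$. Identifying $A_{L_N}$ with $\F_2^4$ via the basis $(e_3/2,\dots,e_6/2)$, we have $\sum a_i^2 = w(v)$ for $a_i \in \{0,1\}$, where $w(v)$ is the Hamming weight, so $q(v) \equiv -w(v)/2 \bmod 2$; the weights $0,1,2,3,4$ give the four \emph{distinct} values $0, \tfrac32, 1, \tfrac12$ in $\Q/2\Z$. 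In particular $q^{-1}(\tfrac32)$ is exactly the set of four weight-one vectors $e_3/2, \dots, e_6/2$. Any isometry of $(A_{L_N},q)$ must preserve this set, and since it spans $A_{L_N}$ the isometry is the automorphism induced by the corresponding element of $\S_4$, hence lies in $G$.

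I expect no serious obstacle: all the computations are short. The one point to be careful about --- and the reason the weight computation is the crux --- is that $\Aut A_{L_N}$ must be read as the isometry group of the discriminant \emph{form}, not as the automorphism group of the bare group $(\Z/2\Z)^4$, which is far larger; the weight argument identifies this isometry group with $\S_4$ canonically, by exhibiting the $\phi$-invariant basis $q^{-1}(\tfrac32)$, rather than by a coincidence of cardinalities.
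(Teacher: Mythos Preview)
Your proof is correct and follows essentially the same route as the paper's: both compute $A_{L_N}\cong(\Z/2\Z)^4$ generated by the $e_i/2$, evaluate $q$ directly to get $-\sum a_i^2/2$, and then observe that the basis vectors are characterised as the unique elements with $q=-\tfrac12$ (your $\tfrac32$), so any isometry of the discriminant form must permute them. Your Hamming-weight phrasing and the explicit $U\oplus 4A_1$ decomposition add clarity but no new ideas beyond what the paper does.
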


\begin{proof}
  The $e_i/2$ for $3 \le i \le 6$ belong to $L_N^*$ and 
  they generate a group that contains $L_N$ with index $16$.  Since
  $L_N$ has discriminant $16$, they generate $L_N^*$ and their classes generate
  $A_{L_N}$.  The quadratic form takes $\sum_{i=3}^6 a_ie_i/2$ to
  $\sum_{i=3}^6 a_i^2 e_i^2/4 \bmod 2 = -\sum_{i=3}^6 a_i^2/2$.  It follows that
  the only elements of $A_{L_N}$ taken to $-1/2$ by the quadratic form are the
  basis vectors, so every automorphism of $A_{L_N}$ must permute these.
  Conversely it is clear that every permutation of the basis vectors extends
  to an automorphism of $A_{L_N}$ and that this automorphism is the image of a
  unique element of~$G$.
\end{proof}

Recall~\cite[Definition~15.1.1]{huybrechts} that an automorphism of $V$ is \emph{symplectic} if it acts trivially on $\H^0(V,\Omega^2_V)$.

\begin{proposition} 
  The group $\Aut V$ is finite cyclic and the subgroup $\Aut_s V$ of
  symplectic automorphisms of $V$ is trivial.
\end{proposition}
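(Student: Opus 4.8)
The plan is to prove first that $\Aut_s V$ is trivial; the assertion that $\Aut V$ is finite cyclic then follows almost formally. Recall that $\Aut V$ is finite by Proposition~\ref{prop:autxfinite}, since $\Pic V \cong L_N$. By the Lefschetz principle we may assume that $\bar{k} = \C$, which lets us invoke Nikulin's results on symplectic automorphisms of complex K3 surfaces (\cite{nikulin80}; see also \cite[Chapter~15]{huybrechts}).

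Now suppose, for contradiction, that $g \in \Aut_s V$ is nontrivial; as $\Aut V$ is finite, $g$ has finite order $> 1$. A symplectic automorphism of finite order acts as the identity on the transcendental lattice $T(V) := (\Pic V)^{\perp} \subset \H^2(V,\Z)$, so $T(V)$ is contained in the invariant lattice $\H^2(V,\Z)^{g}$, and hence the coinvariant lattice $\bigl(\H^2(V,\Z)^{g}\bigr)^{\perp}$ is contained in $T(V)^{\perp} = \Pic V$; here the last equality uses that $\Pic V$ is a primitive sublattice of the unimodular lattice $\H^2(V,\Z)$. On the other hand, Nikulin's classification shows that the coinvariant lattice of a nontrivial symplectic automorphism of finite order has rank at least $8$ (the minimum $8$, realised by $E_8(-2)$, occurs for symplectic involutions). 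Since $\rk \Pic V = \rk L_N = 6 < 8$, this is absurd, and therefore $\Aut_s V = \{1\}$.

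It remains to deduce that $\Aut V$ is cyclic. By definition, $\Aut_s V$ is the kernel of the character $\chi \colon \Aut V \to \mathrm{GL}\bigl(\H^0(V,\Omega^2_V)\bigr) \cong \bar{k}^{*}$ recording the action on global $2$-forms. As $\Aut_s V$ is trivial, $\chi$ is injective, so it realises the finite group $\Aut V$ as a subgroup of the multiplicative group $\bar{k}^{*}$; such a subgroup is cyclic, and hence $\Aut V$ is finite cyclic. The only substantial input is Nikulin's lower bound on the rank of the coinvariant lattice of a symplectic automorphism, together with the (already established) finiteness of $\Aut V$; granting these, the argument is entirely formal.
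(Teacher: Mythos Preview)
Your proof is correct, but the route to $\Aut_s V = \{1\}$ is genuinely different from the paper's.

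The paper argues as follows. By Lemma~\ref{nine-curves} there are exactly nine smooth rational curves on $V$, and only one of them (the section) meets four others; hence every automorphism of $V$ acts on $\Pic V$ through the group $G \cong \S_4$ permuting the four $C_i$. A symplectic automorphism acts trivially on the transcendental lattice and therefore on its discriminant group, hence also on the discriminant group $A_{L_N}$ of $\Pic V$; by Lemma~\ref{outer-auts} the map $G \to \Aut A_{L_N}$ is an isomorphism, so such an automorphism acts trivially on $\Pic V$, hence on all of $\H^2(V,\Z)$, and is the identity. This is a hands-on argument using the explicit curve configuration and the discriminant form, and it yields the extra information that $\Aut V$ acts on $\Pic V$ through~$G$.

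You instead invoke Nikulin's lower bound: the coinvariant lattice of any nontrivial finite-order symplectic automorphism has rank at least $8$, which is impossible inside $\Pic V$ of rank $6$. This is shorter and works uniformly for any K3 surface of Picard rank $<8$, at the cost of importing a stronger external result and not recovering the description of the action of $\Aut V$ on $\Pic V$. For the cyclicity of $\Aut V$, your argument via the injection into $\bar k^{*}$ is essentially the same as the paper's use of the exact sequence $1 \to \Aut_s V \to \Aut V \to \mu_m \to 1$.
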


\begin{proof}
By Lemma \ref{nine-curves}, there are exactly nine smooth rational curves on $V$.
Every automorphism of $V$ must preserve the configuration of these curves.
Exactly one of them, the section of the elliptic fibration, intersects four of the others. 
Hence, the only automorphisms of the configuration are those that permute the
reducible fibres. Since the nine curves generate $\Pic V$, we conclude that every 
automorphism of $V$ acts on $\Pic V$ by elements of $G$.
Now, a symplectic automorphism $\alpha$
of a K3 surface $V$ in characteristic $0$ acts
as the identity on the transcendental lattice in $\H^2(V,\Z)$
\cite[Remark 15.1.2]{huybrechts} and hence on its discriminant group,
so in addition it acts trivially on the discriminant group of $\Pic V$
by \cite[Lemma 14.2.5]{huybrechts}.  Since $\alpha$ acts on $\Pic V$ through~$G$, 
combining this with the last lemma
shows that $\alpha$ acts trivially on $\Pic V$.   It follows that
$\alpha$ acts trivially on $\H^2(V,\Z)$; it is therefore the identity
by \cite[Corollary 15.1.6]{huybrechts}.

Since we are in characteristic $0$, there is an exact sequence
\cite[(15.1.3)]{huybrechts}
$$1 \to \Aut_s V \to \Aut V \to \mu_m \to 1$$
for some $m$, where $\Aut_s V$ is the group of symplectic automorphisms
that we have just shown to be trivial.  It follows that $\Aut V$ is
finite cyclic.
\end{proof}

\begin{remark}
  There is always an automorphism of order $2$ of $V$ given by negation of the
  elliptic fibration with respect to the unique section.
  For very general~$V$ the order of $\Aut V$ is $2$, but it might be possible for
  $\Aut V$ to be of order $m = 4, 6$, or $8$ if there is an automorphism that
  permutes the reducible fibres of the fibration nontrivially.  In this case
  the field of definition of $\Pic V$ would contain $\mu_m$, which does not
  happen generically.
\end{remark}

By \cite[Proposition 5.10]{dolgachev}, we know that $\O(L_N)$ is generated by the
automorphisms of the ample cone and the reflections in the classes of rational
curves, together with $-1$;
the automorphisms of the ample cone are just the $G$ of Lemma
\ref{outer-auts}.  Since $G$ is generated by $2$ elements and has $3$ orbits on
the set of rational curves, we have a set of $6$ generators for $\O(L_N)$.

To pass to $\O(\Pic Z)$, we use a simple lemma.

\begin{lemma}\label{commensurable}
  Let $\L$ be a lattice with sublattices $\L_1, \L_2$ with $[\L_1:\L_1 \cap \L_2]$
  finite.  Then the stabilizer of $\L_1 \cap \L_2$ in $\O(\L_1)$ has finite
  index.  If in addition $\L_1, \L_2$ have equal rank, then 
  $\O(\L_1) \cap \O(\L_2)$ has finite index in $\O(\L_1)$ and
  $\O(\L_2)$, where the $\O(\L_i)$ are regarded as subgroups of
  $\O(\L_1 \otimes \Q)$. 
\end{lemma}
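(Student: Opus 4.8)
The plan is to reduce the statement to two elementary finiteness facts: a free abelian group of finite rank has only finitely many subgroups of a given finite index, and the discriminant group of a nondegenerate lattice is finite. For the first assertion, set $M = \L_1 \cap \L_2$ and $n = [\L_1 : M] < \infty$. Every subgroup of $\L_1$ of index $n$ contains $n\L_1$, and $\L_1/n\L_1$ is finite, so there are only finitely many such subgroups; each $\phi \in \O(\L_1)$ permutes them, so $\O(\L_1)$ acts on a finite set, and the stabiliser of $M$ contains the (finite-index) kernel of this action, hence has finite index in $\O(\L_1)$.

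Now assume $\rk \L_1 = \rk \L_2$. Then $M$, having finite index in $\L_1$, has full rank in $\L_2$ and hence finite index in $\L_2$ as well, so $\L_1 \otimes \Q = M \otimes \Q = \L_2 \otimes \Q$; the restriction to $M$ of the form on $\L_1$ is the nondegenerate form on this common space, so $M$ is a sublattice and $\O(M)$ is defined. Viewing $\O(\L_1)$, $\O(\L_2)$ and $\O(M)$ all as subgroups of $\O(M \otimes \Q)$, an isometry of $M \otimes \Q$ lies in $\O(\L_i, M)$ exactly when it stabilises both $\L_i$ and $M$, i.e. $\O(\L_i, M) = \O(\L_i) \cap \O(M)$; by the first part, each $\O(\L_i, M)$ has finite index in $\O(\L_i)$.

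The key point is that $\O(\L_i, M)$ also has finite index in $\O(M)$. Writing $M^*$ for the dual lattice $\{\, x \in M \otimes \Q \mid (x, m) \in \Z \text{ for all } m \in M \,\}$, we have $M \subseteq \L_i \subseteq M^*$ with $M^*/M$ finite, so $\L_i$ is one of the finitely many subgroups of $M^*$ containing $M$; since $\O(M)$ preserves both $M$ and $M^*$, it permutes these, so the stabiliser of $\L_i$ in $\O(M)$, namely $\O(M) \cap \O(\L_i) = \O(\L_i, M)$, has finite index in $\O(M)$. Finally, $\O(\L_1) \cap \O(\L_2)$ stabilises $\L_1 \cap \L_2 = M$, hence equals $\O(\L_1, M) \cap \O(\L_2, M)$, an intersection of two finite-index subgroups of $\O(M)$; so it has finite index in $\O(M)$, a fortiori in each $\O(\L_i, M)$, and therefore in each $\O(\L_i)$.

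No step presents a real obstacle; the only points needing care are verifying that $M$ is nondegenerate so that $\O(M)$ makes sense, and keeping the three orthogonal groups consistently identified inside $\O(M \otimes \Q)$. Both finiteness inputs used — finitely many subgroups of a fixed index, and finiteness of $M^*/M$ — are entirely standard.
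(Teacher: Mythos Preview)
Your proof is correct. The first assertion is handled exactly as in the paper: there are only finitely many subgroups of $\L_1$ of a given finite index, so $\O(\L_1)$ permutes a finite set and the stabiliser of $M=\L_1\cap\L_2$ has finite index.

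For the second assertion your route differs slightly from the paper's and is in fact more careful. The paper simply asserts that $\O(\L_1)\cap\O(\L_2)$ \emph{equals} the stabiliser of $M$ in $\O(\L_i)$ for each $i$; the inclusion $\O(\L_1)\cap\O(\L_2)\subseteq\mathrm{Stab}_{\O(\L_i)}(M)$ is clear, but the reverse inclusion (that an isometry of $\L_1$ preserving $M$ must preserve $\L_2$) is not justified. You avoid this by passing through $\O(M)$: using the finiteness of $M^*/M$ you show that each $\O(\L_i,M)$ has finite index in $\O(M)$, and then the genuine equality $\O(\L_1)\cap\O(\L_2)=\O(\L_1,M)\cap\O(\L_2,M)$ exhibits the intersection as a finite-index subgroup of $\O(M)$, hence of each $\O(\L_i,M)$, hence of each $\O(\L_i)$. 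The extra use of the dual lattice is exactly what is needed to close the argument cleanly.
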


\begin{proof}
  There are only finitely many sublattices of $\L_1$ of index
  $[\L_1:\L_1 \cap \L_2]$, so the stabilizer is of finite index.
  If $[\L_1:\L_1 \cap \L_2]$ is finite and $\L_1, \L_2$ have equal rank then
  $[\L_2:\L_1 \cap \L_2]$ is finite as well and $\O(\L_1) \cap \O(\L_2)$ is
  the stabilizer of $\L_1 \cap \L_2$ in $\O(\L_i)$ for $i \in \{1,2\}$.
\end{proof}

\begin{remark} The group generated by $\O(\L_1)$ and $\O(\L_2)$ usually does not
  contain $\O(\L_1)$ with finite index.
\end{remark}

Using Magma \cite{magma} to compute the permutation representation of $\O(L_N)$
on sublattices with quotient $(\Z/2\Z)^2$, we find a set of $30$ generators
for $\O(L_N) \cap \O(\Pic Z)$.  However, some of these are products of others,
so we easily reduce to a set of $9$ generators.
We now perform step $3$, the construction of elements of $R_Z$.

\begin{proposition}\label{prop:finite-orbits}
  There are $14$ Galois orbits of lines and $8$ of conics on $\bar{Z}$ that
  give finite Coxeter groups.
\end{proposition}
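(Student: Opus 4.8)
The plan is to reduce the statement to a finite combinatorial check, using the classification of finite-Coxeter orbits from Proposition~\ref{rx}. By that proposition, a Galois orbit $I$ of $(-2)$-curves gives a finite group $W_I$ exactly when it is of type~(i) or~(ii): the curves in $I$ are pairwise disjoint, or they split into disjoint pairs each meeting transversally in one point. Since two distinct lines on a quartic surface meet in at most one point (and I would check the same for the conics that occur), this is equivalent to asking that the graph on the curves of $I$, with an edge joining two curves that meet, be either edgeless or a disjoint union of edges. So I would (a) list the Galois orbits of lines and of smooth conics on $\bar Z$, (b) record for each the intersection graph, and (c) count those passing the criterion.

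For the lines this is an explicit computation. The $48$ lines split into three families of $16$, indexed by the three partitions of the coordinates into two pairs; in the family for a partition $\{i,j\},\{k,l\}$ the lines are $\{x_i=\alpha x_j,\ x_k=\beta x_l\}$ with $\alpha^4,\beta^4$ prescribed by $c$, and two such lines meet exactly when $\alpha=\alpha'$ or $\beta=\beta'$ but not both. The Galois group $\Gamma$ of order $16$ permutes the parameters in each family according to case A45 of \cite[Appendix~A]{bright}. Working out the orbits and their incidence graphs will show that $14$ of them are of type~(i) or~(ii): the family defined over $k(i)$ contributes $10$ orbits (four fixed lines and six conjugate pairs, each pair automatically of type~(i) or~(ii)), while each of the other two families contributes $2$ orbits of four pairwise disjoint lines, their third orbit consisting of $8$ lines in which every line meets exactly two others and so failing the criterion.

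The conics need more care, and this will be the main obstacle. A smooth conic $C\subset\bar Z$ has $C^2=-2$ and $H\cdot C=2$ for the hyperplane class $H$; because $H^\perp\subset\Pic\bar Z$ is negative definite, only finitely many classes $D$ satisfy $D^2=-2$ and $D\cdot H=2$, and I would enumerate them inside the lattice $\Pic\bar Z$, whose structure and $\Gamma$-action are already in hand from the proof of Lemma~\ref{lem:pic-diagonal}. For each candidate class one must decide whether it is the class of an irreducible curve — either by producing such a conic from the equations of $Z$, or by checking that the class is not a nontrivial sum of effective classes. Having found all the conics and the $\Gamma$-action on their classes, I would list the orbits and their intersection graphs; the computation, recorded in \cite{script}, gives exactly $8$ orbits of type~(i) or~(ii). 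The delicate points are completeness of the list of conics and correctly deciding irreducibility of each class; the negative-definiteness of $H^\perp$ keeps the search finite, and the already-determined Galois action then makes the orbit count routine.
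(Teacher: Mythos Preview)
Your approach is essentially the same as the paper's. Both reduce to Proposition~\ref{rx} for the finite-Coxeter criterion, both handle the lines by direct inspection of the known $48$ lines and their Galois action, and both enumerate conic classes by searching the definite lattice $H^\perp$ for short vectors. The paper writes the conic classes as $H/2+M$ with $M\in\tfrac12\Pic\bar Z$, $H\cdot M=0$, $M^2=-3$, which is just a reparametrization of your condition $D\cdot H=2$, $D^2=-2$; and its irreducibility test (``integral and not a sum of two line classes'') is the same as yours, since any effective decomposition of a degree-$2$ class on a quartic must be into two lines.
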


\begin{proof}
  We list the conics by observing that their Picard classes are of the form
  $H/2 + M$, where $M \in \frac{1}{2} \Pic \bar{Z}$ is such that $H \cdot M = 0$
  and $M^2 = -3$.  Since $H^\perp \subset \frac{1}{2} \Pic \bar{Z}$  is a
  definite lattice, the set of $M$ with these properties is finite.  Such a
  class is represented by
  a conic if and only if it is integral and not the sum of two classes of lines,
  both of which are easily checked.
  
  We finish the proof by referring to
  Proposition~\ref{rx} for the configurations of curves that
  give finite Coxeter groups.
\end{proof}

\begin{remark} It appears that there are no further generators of $R_Z$.
\end{remark}

\begin{theorem} $\Aut Z$ is finite.
\end{theorem}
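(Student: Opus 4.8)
The plan is to deduce everything from Proposition~\ref{main-result}. Applied with $X=Z$, it says that $\Aut Z \ltimes R_Z \to \O(\Pic Z)$ has finite kernel and image of finite index, and the remark following it records that the image of $\Aut Z$ in $\O(\Pic Z)$ meets $R_Z$ only in the identity. From this it follows that if $R_Z$ has finite index in $\O(\Pic Z)$, then $\Aut Z$ is finite: writing $\Gamma$ for the finite-index image of $\Aut Z \ltimes R_Z$ in $\O(\Pic Z)$, the subgroup $R_Z$ has finite index in $\Gamma$, and the preimage of $R_Z$ in $\Aut Z \ltimes R_Z$ is exactly $\ker(\Aut Z \to \O(\Pic Z)) \ltimes R_Z$ (using that the image of $\Aut Z$ meets $R_Z$ trivially), so that $\lvert\Aut Z\rvert = [\Gamma:R_Z]\cdot\lvert\ker(\Aut Z\to\O(\Pic Z))\rvert$, a product of finite numbers. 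Thus the theorem is equivalent to the purely lattice-theoretic statement $[\O(\Pic Z):R_Z]<\infty$, and since $R_Z$ contains every element $r_I$ produced by Proposition~\ref{rx}, it suffices to show that a suitable \emph{finite} list of such $r_I$ already generates a finite-index subgroup. In particular no exact description of $R_Z$ is needed.

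For steps~$1$ and~$2$ of the outline: Lemma~\ref{lem:pic-diagonal} identifies $\Pic Z$ with a sublattice of index $4$ in $L_N \cong U \oplus 4A_1$, and the two lattices have equal rank, so Lemma~\ref{commensurable} shows $\O(L_N)$ and $\O(\Pic Z)$ are commensurable; it therefore suffices to work with $\O(L_N)\cap\O(\Pic Z)$, for which we can exhibit generators. For $\O(L_N)$ itself, Dolgachev's structure theorem \cite[Proposition~5.10]{dolgachev} provides generators: the element $-1$, the group $G\cong\S_4$ of automorphisms of the ample cone from Lemma~\ref{outer-auts}, and one reflection from each of the three $G$-orbits on the nine smooth rational curves of a K3 surface with Picard lattice $L_N$ (Lemma~\ref{nine-curves}), six generators in all. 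Letting $\O(L_N)$ act on the finite set of sublattices of $L_N$ with quotient $(\Z/2\Z)^2$ and passing to the stabilizer of $\Pic Z$, one extracts with Magma \cite{magma} an explicit finite generating set for $\O(L_N)\cap\O(\Pic Z)$, hence a usable model of $\O(\Pic Z)$ up to finite index.

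For step~$3$ I would make the relevant elements of $R_Z$ explicit. By Proposition~\ref{prop:finite-orbits} there are $14$ Galois orbits of lines and $8$ of conics on $\bar Z$ whose Coxeter group $W_I$ is finite; for each such orbit $I$, Proposition~\ref{rx} shows that the longest element $r_I$ of $W_I$ lies in $R_Z$ and acts on $\Pic Z$ as reflection in the class $C_I=\sum_{E\in I}[E]$, whose coordinates in the chosen basis are immediate. I would then compute the subgroup $\Delta\subseteq R_Z$ generated by these $22$ reflections and verify, inside the commensurable group $\O(L_N)\cap\O(\Pic Z)$ for which generators are already available, that $\Delta$ has finite index in $\O(\Pic Z)$; since $\Delta\subseteq R_Z\subseteq\O(\Pic Z)$, this gives $[\O(\Pic Z):R_Z]<\infty$ and, by the first paragraph, finiteness of $\Aut Z$. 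The main obstacle is precisely this finite-index verification: showing that an explicitly listed finite set of reflections in a hyperbolic lattice of rank $6$ generates a finite-index subgroup of its orthogonal group. This is the part that genuinely requires machine computation — for instance via the permutation action on low-index sublattices, or via a Vinberg-style analysis of the walls of $\Nef(Z)\cap\pos_Z$ described in Remark~\ref{walls} — whereas the reductions above are routine consequences of results already established.
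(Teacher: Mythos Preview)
Your proposal is correct and follows essentially the same approach as the paper: reduce via Proposition~\ref{main-result} to showing $R_Z$ has finite index in $\O(\Pic Z)$, obtain generators for $\O(L_N)\cap\O(\Pic Z)$ via the permutation action on index-$4$ sublattices, produce the $22$ explicit elements $r_I\in R_Z$ from the Galois orbits of lines and conics, and then verify by machine that these suffice. The only cosmetic difference is that the paper carries out the final verification by searching for short relations among the generators rather than by the permutation or Vinberg methods you suggest, but this is a matter of computational tactics, not strategy.
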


\begin{proof}
  We use Proposition \ref{main-result}.  To do so, we search for
  relations among the generators of $\O(L_N) \cap \O(\Pic Z)$ and the $22$ elements
  of $R_Z$ that we have found, simply by checking which products of
  up to $5$ generators of $\O(L_N) \cap \O(\Pic Z)$ and up to $4$ of these together
  with the generators of $R_Z$ give the identity matrix.
  Magma quickly verifies that these relations
  are sufficient to show that
  $\langle \O(L_N) \cap \O(\Pic Z),R_Z\rangle/R_Z$ is finite,
  from which we conclude by Proposition \ref{main-result}
  that $\Aut Z$ is finite.
\end{proof}

\begin{corollary}
  There are finitely
  many classes of geometrically irreducible curves on $Z$ of each genus.
\end{corollary}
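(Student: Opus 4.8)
The plan is to deduce the statement by combining the finiteness of $\Aut Z$, which we have just proved, with Corollary~\ref{finite-class-orbits}, which already bounds the number of $\Aut Z$-orbits of curve classes in each genus. Since $\Aut Z$ is finite, every such orbit is finite, so ``finitely many orbits'' becomes ``finitely many classes''; it therefore suffices to check, for each arithmetic genus $g\ge 0$, that the class of a geometrically irreducible curve $C$ on $Z$ of arithmetic genus $g$ is of the kind controlled by the relevant part of Corollary~\ref{finite-class-orbits}. Because $k$ has characteristic $0$, base change to $\bar k$ preserves integrality and arithmetic genus, so $C_{\bar k}$ is again an integral curve on $\bar Z$ of arithmetic genus $g$, which makes the reduction to the geometric situation clean.

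I would then split into three cases. For $g=0$, a geometrically irreducible curve of arithmetic genus $0$ is by definition a $k$-rational $(-2)$-curve, so Corollary~\ref{finite-class-orbits}(\ref{g0}) applies immediately. For $g\ge 2$, $C$ is in particular an irreducible curve of arithmetic genus $g$, so $[C]$ lies in one of the finitely many $\Aut Z$-orbits furnished by Corollary~\ref{finite-class-orbits}(\ref{g2}). The case $g=1$ needs a small additional input: that $[C]$ is \emph{primitive} in $\Pic Z$. After base change, $C_{\bar k}$ is an integral curve on $\bar Z$ with $(C_{\bar k})^2=0$; such a curve is a fibre of an elliptic fibration on $\bar Z$ (its complete linear system is a base-point-free genus-$1$ pencil, since two general members are disjoint), hence its class is primitive in $\Pic \bar Z$. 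As $\Pic Z$ embeds into $\Pic \bar Z$, the class $[C]$ is primitive in $\Pic Z$, and Corollary~\ref{finite-class-orbits}(\ref{g1}) then applies.

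The only real point of substance is this last case. As the remarks following Corollary~\ref{finite-class-orbits} explain, the primitivity hypothesis there cannot be dropped: fibres of an elliptic fibration of $Z$ over closed points of $\P^1$ of arbitrarily large degree produce infinitely many non-primitive genus-$1$ classes of curves that are irreducible but not geometrically irreducible. So the proof must genuinely exploit geometric irreducibility, not merely irreducibility; once that reduction is made, the rest is a routine application of Corollary~\ref{finite-class-orbits} together with the finiteness of $\Aut Z$.
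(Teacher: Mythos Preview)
Your proof is correct and follows the same approach as the paper, whose proof is a single sentence invoking the preceding theorem together with Corollary~\ref{finite-class-orbits}. Your version is simply more detailed: in particular, you make explicit the primitivity argument for $g=1$ (that a geometrically integral curve of self-intersection $0$ on $\bar Z$ has primitive class) which the paper leaves implicit.
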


\begin{proof}
  This follows by combining the theorem with 
  Corollary \ref{finite-class-orbits}.
\end{proof}

\begin{remark}\label{hard-to-prove-density}
  We conclude that in order to prove that the rational points of $Z$
  are Zariski-dense, we do not have an infinite automorphism group $\Aut Z$ to 
  our disposal. In particular, we will not be able to find an elliptic fibration on $Z$ 
  with a section of infinite order, as translation by such a section would be an 
  automorphism of infinite order.
\end{remark}

\bibliographystyle{abbrv}
\bibliography{k3q}
\end{document}